\documentclass[11pt]{article} 

\usepackage[utf8]{inputenc} 

\usepackage{cancel}

\usepackage{geometry} 
\usepackage{graphicx} 
\usepackage[english]{babel}

\usepackage{booktabs} 
\usepackage{array} 
\usepackage{paralist} 
\usepackage{verbatim} 
\usepackage{subfig} 
\usepackage{amssymb}
\usepackage{bbm}

\usepackage{amsmath}
\usepackage{amsthm}
\usepackage{makecell}

\usepackage{epigraph}

\usepackage{hyperref}
\hypersetup{
    colorlinks=true,
    citecolor=blue
}

\usepackage{pdflscape}
\usepackage{natbib}
\usepackage{algorithm2e}

\usepackage{wrapfig}
\usepackage{subfig}
\usepackage[capitalize,noabbrev]{cleveref}
\usepackage{authblk}
\usepackage{changepage}

\usepackage{tikz-cd}
\usepackage{xcolor}

\renewcommand{\le}{\leqslant}
\renewcommand{\leq}{\leqslant}

\renewcommand{\ge}{\geqslant}

\newcommand{\id}{\mathbbmss 1}

\newcommand {\matl}{\left[ \begin{matrix}}
\newcommand {\matr}{\end{matrix}\right]}
\newcommand {\Exp}{ \mathbb E }

\renewcommand {\Pr}{ \mathbb P }

\newcommand {\Var}{\mathbf{Var}}

\newcommand{\kl}{D_{\operatorname{KL}}}

\renewcommand{\d}{\mathrm{d}}

\newcommand{\cA}{\mathcal{A}}

\newcommand{\cF}{\mathcal{F}}

\newcommand{\cS}{\mathcal{S}}

\newcommand{\cB}{\mathcal{B}}

\newcommand{\cM}{\mathcal{M}}
\newcommand{\cH}{\mathcal{H}}
\newcommand{\cP}{\mathcal{P}}
\newcommand{\cU}{\mathcal{U}}
\newcommand{\cD}{\mathcal{D}}
\newcommand{\cV}{\mathcal{V}}

\newcommand{\fF}{\mathbf{F}}
\newcommand{\fA}{\mathbf{A}}
\newcommand{\fB}{\mathbf{B}}
\newcommand{\fQ}{\mathbf{Q}}
\newcommand{\fH}{\mathbf{H}}
\newcommand{\fG}{\mathbf{G}}
\newcommand{\fM}{\mathbf{M}}
\newcommand{\fR}{\mathbf{R}}
\newcommand{\fD}{\mathbf{D}}
\newcommand{\fU}{\mathbf{U}}

\newcommand{\CI}{\operatorname{CI}}

\newcommand{\iid}{\overset{\mathrm{iid}}{\sim}}

\DeclareMathAlphabet{\mathbbmsl}{U}{bbm}{m}{sl}

\DeclareMathOperator*{\Expw}{\Exp}
\DeclareMathOperator*{\Prw}{\Pr}
\DeclareMathOperator*{\Varw}{\Var}

\DeclareMathOperator*{\argmax}{arg\,max}
\DeclareMathOperator*{\argmin}{arg\,min}

\newcommand{\hwcmt}[1]{}

\newcommand{\eff}{{\mathrm{eff}}}
\newcommand{\efn}[1]{ {^\circ({#1}^\circ)} }
\newcommand{\iids}{{\mathrm{iid}}}
\newcommand{\ber}{\operatorname{ber}}

\newtheorem{theorem}{Theorem}[section]
\newtheorem{definition}[theorem]{Definition}

\newtheorem{fact}{Fact}
\newtheorem{proposition}[theorem]{Proposition}

\newtheorem{corollary}[theorem]{Corollary}
\newtheorem{lemma}[theorem]{Lemma}
\theoremstyle{definition}\newtheorem{remark}[theorem]{Remark}

\Crefname{fact}{Fact}{Facts}
\Crefname{assumption}{Assumption}{Assumptions}

\usepackage{tocbasic}
\DeclareTOCStyleEntry[
  beforeskip=.05em plus .6pt,
  pagenumberformat=\textbf
]{tocline}{section}

\title{E-values and sequential power-one tests \\ for monotonicity and unimodality}

\author[1]{Hongjian Wang}
\author[2]{Aaditya Ramdas}
\affil[1, 2]{Department of Statistics and Data Science, Carnegie Mellon University}
\affil[2]{Machine Learning Department, Carnegie Mellon University} 
\affil[ ]{\texttt{ \{hjnwang,aramdas\}@cmu.edu  }}
\date{\today}

\begin{document}

\maketitle

\begin{abstract}
We develop e-values and e-processes testing the null hypothesis that a distribution over nonnegative integers is monotone, and that a distribution over integers is unimodal given a certain mode. Our e-processes lead to tests of power one under any non-null distribution with a sequence of i.i.d.\ observations, and consistent set-valued mode estimators that eventually equal the true set of modes. Additionally, we characterize the set of all e-values, and therefore the set of all valid tests, with one monotone and unimodal observation, as well as the most powerful e-value for a fixed alternative.
    We then show that many of our results can be generalized to continuous random variables, relating them to the existing results in the shape-constrained inference literature.
\end{abstract}

{ \vfill \setcounter{tocdepth}{1}
    \hypersetup{linkcolor=black} \tableofcontents}
    \vfill
\newpage

\section{Introduction} \label{sec:intro}

Power-one sequential nonparametric tests with uniformly small type I error probability are of major interest in the modern statistics literature. These are
sequential hypothesis testing procedures for a nonparametric class of null distributions $\cP_0$ such that
\begin{equation}
    \Prw_{(X_n)\iid P}( \text{reject after finite observations} ) \le \alpha,\quad \forall P \in \cP_0,
\end{equation}
and
\begin{equation}
    \Prw_{(X_n)\iid Q}(\text{reject after finite observations}) = 1,\quad \forall Q \in \cP\setminus\cP_0,
\end{equation}
where the ambient class $\cP$ contains all distributions allowed in the statistical model.
Many important nonparametric pairs of classes $\cP \subseteq \cP_0$ have been studied in the past literature. To name a few landmark results, \cite{darling1968some,shekhar2023nonparametric} considered the two-sample comparison classes; a long list of authors including \cite{robbins1970statistical,shafer2005probability,howard2020time,WANG2023cat,waudbysmith2024betting} studied the classes for one- or two-sided \emph{mean} testing under a plethora of nonparametric regularity assumptions (e.g.\ subGaussian, sub-$\psi$, bounded $p$-th moment for $p \in (1,2]$, boundedness); \cite{howard2022sequential} studied the classes for \emph{median} testing (more generally, testing any quantile) without any distribution assumption. In this paper, we study the classes for a location testing problem not covered in the literature: \emph{mode} testing.

Many of these past works, either explicitly or implicitly, are based on the concept of \emph{e-values} and \emph{e-processes}. An e-value for testing the null hypothesis ``$P \in \cP_0$'' (or simply, ``an e-value for $\cP_0$'') is defined as a nonnegative statistic $E$ such that $\Expw_{P}E \le 1$ under any null distribution $P \in \cP_0$. An e-process for $\cP_0$ is then defined as a sequence of statistics $M_n = M(X_1,\dots, X_n)$ such that $M_\tau$ yields an e-value for $\cP_0$ at any stopping time $\tau$ on the data filtration. A sequential test controlling type I error rate within $\alpha$ is then formed by rejecting the null ``$P \in \cP_0$'' whenever $(M_n)$ first surpasses $1/\alpha$ \citep[Chapter 7]{ramdas2024hypothesis}. For this test to be power-one, $(M_n)$ must grow past $1/\alpha$ under any alternative distribution $Q \in \cP\setminus\cP_0$. In fact, authors (including us in this paper) often show that $M_n \to \infty$ under any such $Q$, usually exponentially fast. Our main results in this paper concern the construction of
\begin{itemize}
    \item \underline{(e-processes for discrete monotonicity)} e-processes for $\cM$, the class of distributions on nonnegative integers $\mathbb Z^{\ge 0}$ with monotone non-increasing probability mass function, that grow to $\infty$ exponentially fast under any distribution on $\mathbb Z^{\ge 0}$ not in $\cM$;
    \item \underline{(e-processes for discrete $\theta$-unimodality)} e-processes for $\cD_\theta$, the class of distributions on integers $\mathbb Z$ with monotone non-increasing probability mass function on $\{ \theta, \theta+1 ,\dots \}$ and non-decreasing probability mass function on $\{ \dots, \theta -1, \theta \}$, that grow to $\infty$ exponentially fast under any distribution on $\mathbb Z$ not in $\cD_\theta$.
\end{itemize}
The second result above is for a fixed mode $\theta \in \mathbb Z$, but it implies a power-one sequential test (though not exponentially growing e-processes) for discrete unimodality with \emph{unrestricted} mode; that is, for $\bigcup_{\theta \in \mathbb Z} \cD_{\theta}$.

A recent paper by \cite{ram2026power} resolves the problem of constructing exponentially powerful nonparametric e-processes (and thus power-one sequential tests) with an elegant topological condition: that $\cP_0$ is \emph{weakly compact} (i.e.\ compact in the topology of weak convergence of measures) within $\cP$, the set of all distributions on a given space. It is worth noting that our problems of monotonicity and unimodality testing do not satisfy this assumption, as neither $\cM$ nor $\cD_\theta$ is weakly compact. To see this, $\cM$ is not a tight family of measures since it contains the uniform distribution on $\{ 0, \dots, n \}$ for arbitrary $n$. By Prokhorov's theorem (see e.g.\ \cite{billingsley2013convergence}), $\cM$ is therefore not relatively compact, and therefore not compact, in the topology of weak convergence. The argument for $\cD_{\theta}$ is analogous.
This renders the problems we study non-trivial in light of the results of \cite{ram2026power}.

Additionally, as is articulated in the monograph by \cite{ramdas2024hypothesis}, the construction and characterization of e-values for a given distribution class is a task of independent interest in itself and might enable various downstream applications. We thus offer in this paper a near-complete study of the problem ``e-values for monotonicity and unimodality''. Beyond (and related to) the main results of power-one sequential tests with e-processes mentioned above, we also study: (1) the set of \emph{all} e-values (therefore all valid tests) for $\cM$ and $\cD_{\theta}$ given \emph{one} observation; (2) the optimal e-values and e-processes for $\cM$ and $\cD_{\theta}$ for a \emph{fixed} alternative given both one and multiple (possibly randomly many) observations; and (3) generalization of these results to continuous monotone and unimodal distributions. Many of these \emph{hypothesis testing} results also have \emph{estimation} consequences, leading to confidence sets and estimators for the unknown mode(s) that we shall spell out during the course.

A few other interesting papers in the past relevant to our study are as follows. \cite{Edelman01111990} constructs a one-observation confidence interval (OOCI) for the mode with one continuous unimodal observation, inspired by prior OOCIs for one Gaussian \citep{abbott1962two,Portnoy02012019} observation. Subsequent works inspired by this unimodal OOCI include a multivariate generalization by \citet[Theorem 5.1]{Andel}, and CIs from multiple possibly dependent observations by \citet[Theorems 7 and 10]{paul2025finite}. We will in particular relate some of our one-observation findins to those by \cite{Edelman01111990}. In a different line of work, monotonicity and unimodality frequently arise in shape-constrained estimation and inference, and the methodology often involves employing the left derivative of the least concave majorant (LCM) of the empirical cumulative distribution function as the nonparametric maximum likelihood estimator
\citep{grenander1956theory,grenander1981abstract,rao1969estimation,birge1989grenader,samworth2025nonparametric}. While the objective of these papers is usually to estimate the entire monotone or unimodal (with fixed mode) ground-truth (i.e.\ estimate $P \in \cM$ or $\cD_{\theta}$ with some $\hat P_n$), we pursue the orthogonal objectives in this work of constructing tests (i.e.\ decide \emph{if} $P \in \cM$ or $\cD_{\theta}$) and mode confidence sets (i.e.\ for what $\theta$ does $P$ does $P$ belong to $\cD_\theta$?). Nonetheless, as we will soon discuss, we will adopt tools and ideas from the nonparametric estimation literature when we consider optimal tests.

\paragraph{Notations.} 
Throughout the paper, we use $\cP(\mathbb A)$ to denote the set of probability measures over the underlying space $\mathbb A$, where $\mathbb A$ is $\mathbb Z$, $\mathbb Z^{\ge 0} = \{ 0,1,2,\dots \}$, $\mathbb R$, or $\mathbb R^{\ge 0}$. If $P \in \cP(\mathbb Z)$, we denote by $f_{P}$ the probability mass function (PMF) of $P$, i.e.\ $f_{P}(n) = P(\{n\})$. 
The set of nonnegative measurable functions $\mathbb A \to [0,\infty)$ is denoted by $\fF(\mathbb A)$. An additional suite of notations will be introduced later in \cref{sec:1obs}.

\paragraph{Paper organization.}
This paper is divided into three major parts. In \cref{sec:seq}, we develop power-one tests based on ``mixture of witness'' e-processes for discrete monotonicity and $\theta$-unimodality. To complement these sequential results, we present in \cref{sec:1obs} a full-on one-observation study of the discrete monotone and $\theta$-unimodal classes via convex duality lens, characterizing the set of all e-values, hence all tests, for these null hypotheses; as well as of the information-theoretic optimality criteria of these problems. Finally, we explore some generalizations into continuous distributions that bear closer connection to the past literature in \cref{sec:cont}. 

Some auxiliary discussions and proofs are in \cref{sec:misc,sec:pf}.

\section{E-processes for monotonicity and unimodality on $\mathbb Z$} \label{sec:seq}

Recall that we denote by $\cM$ the set of all monotone distributions on $\mathbb Z^{\ge 0}$:
\begin{equation}
    \cM = \left\{ P \in \cP(\mathbb Z^{\ge 0}) : f_P(n) \ge   f_P (n+1) \text{ for all }n \in  \mathbb Z^{\ge 0}   \right\},
\end{equation}
and by $\cD_{\theta}$ the set of all $\theta$-unimodal distributions on $\mathbb Z$:
\begin{equation}
    \cD_{\theta} = \left\{ P \in \cP(\mathbb Z) :  \text{$f_P(n-1) \le f_P(n)$ if $n \le \theta$ and $f_P(n) \le f_P(n+1)$ if $n \ge \theta$}  \right\}.
\end{equation}
It is convenient for us now to define the sets of all (one-observation) e-values for $\cM$ and $\cD_\theta$:
\begin{gather}
    \fM = \{ E \in \fF(\mathbb Z^{\ge 0}) :  \Expw_{X \sim P} E(X) \le 1 \text{ for all } P \in \cM \},
\\
    \fD_\theta = \{  E \in \fF(\mathbb Z) :  \Expw_{X \sim P} E(X) \le 1 \text{ for all } P \in \cD_\theta  \}. 
\end{gather}
In fact, we shall develop mathematical tools to describe the sets $\fM$ and $\fD_\theta$ with real parameters later in \cref{sec:1obs}. For the current section, it suffices for us to keep these two notations handy and verify that a few simple functions belong to them, as they will act as the building blocks for
the exponentially powerful e-processes we would like to construct.

\subsection{Power-one e-process for $\cM$ against $\cM^c$}

We begin the construction with the following lemma: given a \emph{fixed} alternative distribution $Q \in \cP(\mathbb Z^{\ge 0}) \setminus \cM$, we can always (with oracle access to the PMF of $Q$) construct a simple ``wavelet'' e-value $E \in \fM$ with positive \emph{e-power} $\Exp_{X \sim Q} \log E(X) > 0$.
The e-power $\Exp_{X \sim Q} \log E(X)$ is a crucial power metric for e-values (see e.g.\ Section 2 in \cite{vovk_wang_2024} and Chapter 3 in \cite{ramdas2024hypothesis}) that originates from the \cite{kelly1956new} criterion, and the condition that the e-power is positive $\Exp_{X \sim Q} \log E(X) > 0$ is stronger than the condition that $E$ is \emph{not} an e-value under this alternative distribution $\Exp_{X \sim Q} E(X) > 1$ by Jensen's inequality. We say that the e-value $E$  ``witnesses'' the alternative $Q$.

\begin{lemma}\label[lemma]{lem:log-power-monotone-evalue}
    Let $Q \in \cP(\mathbb Z^{\ge 0}) \setminus \cM$. Suppose $f_Q(m+1) - f_Q(m) > 0$. Then, the function $E^{Q,m} \in \fF(\mathbb Z^{\ge 0})$ defined as
    \begin{equation}\label{eqn:def-EQm}
    E^{Q,m}(x) = 1 + \lambda_{Q,m}( - \id_{\{ x=m \}} + \id_{\{ x=m + 1 \}})
\end{equation}
where
\begin{equation}\label{eqn:choice-of-lambda}
    \lambda_{Q,m} = \frac{f_Q(m+1) - f_Q(m)}{2( f_Q(m)+ f_Q(m+1)  )} 
\end{equation}
satisfies 
\begin{equation}\label{eqn:e-power-block}
   E^{Q,m} \in \fM  \quad \text{and} \quad \Expw_{X \sim Q} \log E^{Q,m}(X) \ge \frac{(f_Q(m+1) - f_Q(m))^2}{2( f_Q(m)+ f_Q(m+1)  )  }  > 0.
\end{equation}
\begin{proof}
To verify that $E^{Q,m} \in \fM$, it is clear that $| \lambda_{Q,m}| < 1$ which implies $E^{Q,m} > 0$, and that $\Expw_{X \sim P}  E^{Q,m}(X) = 1 + \lambda_{Q,m}(f_P(m+1) - f_P(m)) \le 1 $ under any $P \in \cM$.

Next, note that $\log(1+x) \ge x - x^2$ for $|x| \le 1/2$ and $ \lambda_{Q,m} \in (0,1/2)$. We have
    \begin{align}
         & \Expw_{X \sim Q} \log E^{Q,m}(X) = f_Q(m) \log(1 -  \lambda_{Q,m} ) +   f_Q(m+1) \log(1 +  \lambda_{Q,m} )
         \\
         \ge & f_Q(m)(  -  \lambda_{Q,m} - \lambda_{Q,m}^2 ) +  f_Q(m+1)  (   \lambda_{Q,m} - \lambda_{Q,m}^2 ) = \frac{(f_Q(m+1) - f_Q(m))^2}{2( f_Q(m)+ f_Q(m+1)  )  } .  \label{eqn:e-power-quadratic}
    \end{align}
    This concludes the proof.
\end{proof}
\end{lemma}

As can be seen from the proof, the ``wavelet amplitude'' $\lambda_{Q,m}$ \eqref{eqn:choice-of-lambda} is chosen to maximize the quadratic in \eqref{eqn:e-power-quadratic}.
It is worth noting that given a fixed alternative ground truth $Q$, there exist e-values for $\cM$ with larger e-power than $E^{Q,m}$, which
we discuss in \cref{sec:ripr}. The e-values defined above are much more convenient for our objective of constructing a power-one test.

One-observation wavelet e-values $E^{Q,m}$ compose the following power-one e-process when the statistician knows by oracle the location $m$ where the monotonicity constraint $f_Q(m) \ge f_Q(m+1)$ is violated by the data-generating distribution $Q$, via the technique commonly referred to as ``predictable plug-in'' in the e-value literature: at each sample size $k$, a predictable choice (i.e.\ based on $X_1,\dots, X_{k-1}$) of e-value is multiplied to form the e-process. In the rest of this paper, we denote the empirical measure
\begin{equation}
    \hat Q_n = \frac{1}{n}\sum_{k=1}^n \delta_{X_k},
\end{equation}
where $\delta_x$ is the Dirac point mass on $x$.

\begin{lemma}\label[lemma]{lem:oracle-power1}
     Let $Q \in \cP(\mathbb Z^{\ge 0}) \setminus \cM$. Suppose $f_Q(m+1) - f_Q(m) = \delta > 0$. Then, the process
     \begin{equation}
         M_n^{(m)} = \prod_{k=1}^n E^{  \hat Q_{k-1} ,m}(X_k)
     \end{equation}
    diverges to $\infty$ exponentially fast almost surely under the alternative $X_1,X_2 \dots \iid Q$:
    \begin{equation}\label{eqn:exp-growth-Q}
       \liminf_{n \to \infty}  \frac{\log  M_n^{(m)} }{n}  \ge  \frac{(f_Q(m+1) - f_Q(m))^2}{2( f_Q(m)+ f_Q(m+1)  )  } > 0;
    \end{equation}
    and is a supermartingale under the null $X_1,X_2 \dots \iid P \in \cM$.
\end{lemma}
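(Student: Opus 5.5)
We first pin down how $E^{\hat Q_{k-1},m}$ is defined, since $\hat Q_{k-1}$ need not violate monotonicity at $m$. We read \eqref{eqn:def-EQm}--\eqref{eqn:choice-of-lambda} with $Q$ replaced by $\hat Q_{k-1}$, giving the predictable amplitude
\[
\hat\lambda_k = \frac{f_{\hat Q_{k-1}}(m+1)-f_{\hat Q_{k-1}}(m)}{2\bigl(f_{\hat Q_{k-1}}(m)+f_{\hat Q_{k-1}}(m+1)\bigr)}.
\]
We clip it to $\hat\lambda_k \vee 0$, and set it to $0$ when the denominator vanishes (in particular at $k=1$). This keeps $\hat\lambda_k\in[0,1/2]$.

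\textbf{Null case.} Let $P\in\cM$. Since $\hat\lambda_k$ is $\sigma(X_1,\dots,X_{k-1})$-measurable and $X_k$ is independent of the past,
\[
\Exp_P\bigl[E^{\hat Q_{k-1},m}(X_k)\,\big|\,X_{1:k-1}\bigr] = 1+\hat\lambda_k\bigl(f_P(m+1)-f_P(m)\bigr)\le 1 .
\]
The inequality uses $\hat\lambda_k\ge 0$ and $f_P(m+1)\le f_P(m)$. Each factor is also positive because $\hat\lambda_k\le 1/2$. Hence $M^{(m)}_n$ is a nonnegative supermartingale.

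\textbf{Alternative case.} Write $\log M^{(m)}_n=\sum_{k=1}^n \log E^{\hat Q_{k-1},m}(X_k)$ and define $g(\lambda,x)=\log\bigl(1+\lambda(-\id_{\{x=m\}}+\id_{\{x=m+1\}})\bigr)$. By the strong law of large numbers, $f_{\hat Q_{k}}(m)\to f_Q(m)$ and $f_{\hat Q_k}(m+1)\to f_Q(m+1)$ almost surely. Since $\delta>0$, the denominator $f_Q(m)+f_Q(m+1)$ is positive, so $\hat\lambda_k\to\lambda^*:=\lambda_{Q,m}\in(0,1/2)$ almost surely. Now decompose
\[
\frac1n\log M^{(m)}_n=\frac1n\sum_{k=1}^n g(\lambda^*,X_k)+\frac1n\sum_{k=1}^n\bigl(g(\hat\lambda_k,X_k)-g(\lambda^*,X_k)\bigr).
\]
The first average converges almost surely to $\Exp_Q\log E^{Q,m}(X)$ by the strong law, and this limit is at least the bound in \eqref{eqn:e-power-block} by \cref{lem:log-power-monotone-evalue}. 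For the second average, $g(\cdot,x)$ is Lipschitz on $[0,1/2]$ uniformly in $x$, with constant at most $2$, because $|\partial_\lambda g|\le 1/(1-\lambda)$. So each summand is bounded by $2|\hat\lambda_k-\lambda^*|$. This tends to $0$ almost surely, and by Cesàro averaging so does the second average. Combining the two gives \eqref{eqn:exp-growth-Q}, and since the rate is positive, $M^{(m)}_n\to\infty$ exponentially fast.

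The main obstacle is the convention issue: the bare formula can give a negative $\hat\lambda_k$ or an undefined ratio, and either would break the null supermartingale property. The clipping and zero-denominator rules above fix this without affecting the limit $\lambda^*>0$. The uniform Lipschitz bound then removes any need for martingale LLN arguments; only the ordinary SLLN is used.
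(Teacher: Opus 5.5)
Your proof is correct, but for the alternative case it takes a different route from the paper. The paper computes the conditional mean $\mu_k$ and conditional variance $v_k$ of the log-increments $\log E^{\hat Q_{k-1},m}(X_k)$. It shows $\mu_k\to\mu:=\Exp_{X\sim Q}\log E^{Q,m}(X)$ and $v_k\to v$ almost surely from the convergence of the plug-in amplitude. It then applies a martingale strong law, normalizing by $\sum_k v_k$, to get $\log M_n^{(m)}/n\to\mu$. You instead peel off the oracle i.i.d.\ sum $\sum_k g(\lambda^*,X_k)$ and control the plug-in error pathwise. You use the bound $|g(\hat\lambda_k,x)-g(\lambda^*,x)|\le 2|\hat\lambda_k-\lambda^*|$, which is uniform in $x$, together with Ces\`aro averaging, so only the ordinary SLLN is needed.

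Your route is more elementary. It also sidesteps the paper's assertion $v>0$, which fails for $Q=\delta_{m+1}$: there $\log E^{Q,m}(X)$ is a.s.\ constant and the variance-normalized statement degenerates. The paper's martingale-LLN template, in turn, needs only continuity in $\lambda$ of the conditional mean plus bounded conditional variances, not a Lipschitz bound uniform in $x$. So it transfers with less case-specific work to other plug-in witnesses, as in \cref{sec:cont}.

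You also do two things the paper's proof does not. First, you actually verify the null supermartingale claim. Second, your clipping $\hat\lambda_k\vee 0$ is genuinely necessary for that claim. If \eqref{eqn:choice-of-lambda} is applied literally to $\hat Q_{k-1}$, a null $P$ with $f_P(m)>f_P(m+1)$ makes $\hat\lambda_k$ eventually negative, and then $\Exp_P[E^{\hat Q_{k-1},m}(X_k)\mid X_{1:k-1}]=1+\hat\lambda_k(f_P(m+1)-f_P(m))>1$.

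One cosmetic slip: $\lambda^*\in(0,1/2]$ rather than $(0,1/2)$, since $\lambda^*=1/2$ when $f_Q(m)=0$. This is harmless because your Lipschitz constant is computed on the closed interval $[0,1/2]$.
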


The lemma above is based simply on the i.i.d.\ and martingale strong laws of large numbers, which we formally spell out in \cref{sec:pf-orcl}. Supermartingales with initial value 1 are e-processes simply due to the optional stopping theorem.
As is mentioned in \cref{sec:intro}, a power-one sequential test is formed by rejecting the null $\cM$ at the stopping time $\tau = \min\{ n : M_n^{(m)} \ge 1/\alpha \}$. $\tau$ is finite with probability 1 under $Q$ due to the exponential growth \eqref{eqn:exp-growth-Q}.
The most powerful e-process given a fixed alternative ground truth $Q$ is again discussed later in \cref{sec:ripr}, after developing the rich one-observation theory.

A \emph{mixture} argument over all possible $m \in \mathbb Z^{\ge 0}$ on the e-processes $(M_n^{(m)})$ constructed above then leads to the power-one sequential test for $\cM$ against $\cP(\mathbb Z^{\ge 0}) \setminus \cM$ we want, allowing exponential growth
when
we do not know the monotonicity-breaking point $m$ by oracle.

\begin{theorem}[Power-one sequential test of monotonicity]\label{thm:power1-monotone}
     Let $Q \in \cP(\mathbb Z^{\ge 0}) \setminus \cM$. Then, the process
     \begin{equation}\label{eqn:mixture-Mn}
      M_n = \sum_{m=0}^\infty 2^{-m-1} M_n^{(m)} 
     \end{equation}
     diverges to $\infty$ exponentially fast almost surely under the alternative $X_1,X_2 \dots \iid Q$:
    \begin{equation}\label{eqn:div-rate}
       \liminf_{n \to \infty}  \frac{\log  M_n }{n}  \ge \sup_{m \in \mathbb Z^{\ge 0} } \ \frac{((f_Q(m+1) - f_Q(m))^+)^2}{2( f_Q(m)+ f_Q(m+1)  )  } > 0;
    \end{equation}
    and is a supermartingale under the null $X_1,X_2 \dots \iid P \in \cM$.
\end{theorem}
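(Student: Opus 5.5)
The proof splits into two parts: the supermartingale property under the null, and exponential growth under the alternative. Both follow from \cref{lem:oracle-power1} together with elementary facts about nonnegative mixtures with weights $2^{-m-1}$ summing to $1$.

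\emph{Null.} Fix $P \in \cM$ and write $\mathcal F_{n-1} = \sigma(X_1,\dots,X_{n-1})$. Each factor $E^{\hat Q_{k-1},m}$ is an element of $\fM$ chosen predictably. Indeed, for any distribution $\hat Q$ the amplitude \eqref{eqn:choice-of-lambda} lies in $(-1/2,1/2)$, and it equals $0$ when $f_{\hat Q}(m)=f_{\hat Q}(m+1)=0$; we adopt that convention, and we also take $E^{\hat Q_0,m} \equiv 1$. The computation in \cref{lem:log-power-monotone-evalue} gives
\begin{equation}
\Exp[E^{\hat Q_{n-1},m}(X_n) \mid \mathcal F_{n-1}] = 1 + \lambda_{\hat Q_{n-1},m}(f_P(m+1)-f_P(m)).
\end{equation}
If $\lambda_{\hat Q_{n-1},m} \ge 0$, the right-hand side is at most $1$ because $f_P(m+1)\le f_P(m)$. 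A negative $\lambda$ would break this, so we use the clipped amplitude $\lambda^+$. This matches the lemma whenever it is applied with $f_Q(m+1)>f_Q(m)$, and the clipped version is what the definition should mean for arbitrary empirical plug-ins. With the clipping, each $M_n^{(m)}$ is a nonnegative supermartingale starting at $1$. Conditional monotone convergence lets us exchange the conditional expectation with the countable sum, so
\begin{equation}
\Exp[M_n\mid\mathcal F_{n-1}] = \sum_m 2^{-m-1}\,\Exp[M_n^{(m)}\mid\mathcal F_{n-1}] \le M_{n-1},
\end{equation}
and $M_0 = 1$.

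\emph{Alternative.} Fix $Q\notin\cM$. For every $m$ we have $M_n \ge 2^{-m-1}M_n^{(m)}$, hence
\begin{equation}
\frac{\log M_n}{n} \ge \frac{\log M_n^{(m)}}{n} - \frac{(m+1)\log 2}{n}.
\end{equation}
For each $m$ with $f_Q(m+1)>f_Q(m)$, \cref{lem:oracle-power1} shows that $\liminf_n$ of the right-hand side is at least the $m$-th term of the supremum in \eqref{eqn:div-rate}. For the remaining $m$ the $m$-th term is $0$ by the positive part, and the bound is trivial in the sense that it is not needed. There are countably many $m$, so the almost-sure events can be intersected. Taking the supremum over $m$ after the $\liminf$ then gives \eqref{eqn:div-rate}. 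The supremum is strictly positive because $Q\notin\cM$ forces at least one $m$ with $f_Q(m+1)>f_Q(m)$.

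\emph{Main obstacle.} Two delicate points remain. The first is making the null argument rigorous when empirical plug-ins can produce negative amplitudes, which the clipping handles. The second is checking that the proof of \cref{lem:oracle-power1} uses only $\hat Q_{k-1}\to Q$ at the two coordinates $m$ and $m+1$. Uniform control over all $m$ is not needed, because we take the supremum after the pointwise-in-$m$ $\liminf$, and the $\log(2^{-m-1})/n$ penalty vanishes for each fixed $m$.
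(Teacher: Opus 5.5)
Your proposal is correct and follows the paper's argument: combine the bound $\log M_n \ge \log M_n^{(m)} - (m+1)\log 2$ with \cref{lem:oracle-power1} for each $m$ with $f_Q(m+1)>f_Q(m)$, then take the supremum over the countably many $m$. Your clipping of the plug-in amplitude to $\lambda^+$ is necessary, not cosmetic, and the paper's construction leaves it unaddressed: with the literal formula \eqref{eqn:choice-of-lambda}, under $P=\delta_0\in\cM$ one gets $\lambda_{\hat Q_{k-1},0}=-1/2$ for every $k\ge 2$, so every factor of $M_n^{(0)}$ after the first equals $3/2$ and $M_n\to\infty$ under this null; at the same time the clipping leaves the growth part intact, because $\lambda^+_{\hat Q_{k-1},m}\to\lambda_{Q,m}>0$ almost surely and the proof of \cref{lem:oracle-power1} therefore goes through unchanged.
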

The proof is straightforward by noting that $\log M_n \ge \log M_n^{(m)} - \log(2^{-m-1})$ for any $m$ where $f_Q(m+1) > f_Q(m)$. The supremum in \eqref{eqn:div-rate} is remarkable in that the test adapts to the unknown ``most antimonotone location'' of the underlying distribution. The rejection time $\tau = \min\{ n : M_n \ge 1/\alpha \}$ is now finite with probability 1 under any unknown $Q \in \cP(\mathbb Z^{\ge 0}) \setminus \cM$. It is also clear that the infinite sum \eqref{eqn:mixture-Mn} is always computable within $\mathcal{O}(n \max\{ X_1,\dots, X_n \})$ steps, as $M_n^{(m)} = 1$ for large $m$.

\subsection{Power-one e-process for $\cD_{\theta}$ against $\cD_{\theta}^c$}\label{sec:power1Dtheta}

We can use the same recipe of predictable streams of $E^{Q,m}$ to construct a power-one sequential test against the $\theta$-unimodality null. Here, let us extend these functions $E^{Q,m}$, which were previously defined on $\mathbb Z^{\ge 0}$ to the entirely of $\mathbb Z$, with the same definition as in \eqref{eqn:def-EQm}:
\begin{equation}
    E^{Q,m}(x) = 1 + \lambda_{Q,m}( - \id_{\{ x=m \}} + \id_{\{ x=m + 1 \}}).
\end{equation}
That is, when $m \ge 0$, $ E^{Q,m}(x) = 1$ for all $x < 0$. Testing $\cD_{\theta}$ now involves applying these wavelet functions to both $X_n - \theta$ and $\theta - X_n$. We directly state the mixture power-one e-process below omitting its straightforwad proof.

\begin{theorem}[Power-one sequential test of $\theta$-unimodality]\label{thm:power1-unimodal}
     Let $Q \in \cP(\mathbb Z) \setminus \cD_{\theta}$. Then, the process
     \begin{equation}\label{eqn:Jn}
      J_n(\theta) = \sum_{m=0}^\infty 2^{-m-2} (J_n^{m+}(\theta) +  J_n^{m-}(\theta))
     \end{equation}
     where
     \begin{equation}
         J_n^{m+}(\theta) = \prod_{k=1}^n E^{  \hat Q_{k-1} ,m}(X_k - \theta), \quad  J_n^{m-}(\theta) = \prod_{k=1}^n E^{  \hat Q_{k-1} ,m}(\theta - X_k)
     \end{equation}
     diverges to $\infty$ exponentially fast almost surely under the alternative $X_1,X_2 \dots \iid Q$:
    \begin{multline}\label{eqn:uni-growth}
       \liminf_{n \to \infty}  \frac{\log  J_n(\theta) }{n}  \ge \\ \max\left\{ \sup_{m \ge \theta} \ \frac{((f_Q(m+1) - f_Q(m))^+)^2}{2( f_Q(m)+ f_Q(m+1)  )  } , \sup_{m \le \theta} \ \frac{((f_Q(m-1) - f_Q(m))^+)^2}{2( f_Q(m)+ f_Q(m-1)  )  } \right\} > 0;
    \end{multline}
    and is a supermartingale under the null $X_1,X_2 \dots \iid P \in \cD_\theta$.
\end{theorem}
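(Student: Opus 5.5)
The proof reduces to \cref{lem:oracle-power1} and \cref{thm:power1-monotone}, applied separately to the reflected variables $Y_k = X_k - \theta$ and $Z_k = \theta - X_k$. Throughout, the plug-in $E^{\hat Q_{k-1},m}(X_k-\theta)$ is read as the wavelet e-value built from the empirical PMF of $Y_1,\dots,Y_{k-1}$, evaluated at $Y_k$. Likewise $E^{\hat Q_{k-1},m}(\theta - X_k)$ is built from the empirical PMF of $Z_1,\dots,Z_{k-1}$ and evaluated at $Z_k$.

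\textbf{Null.} I would show that every factor is conditionally an e-value for $\cD_\theta$. Fix $P\in\cD_\theta$ and $m\ge 0$. Given $X_1,\dots,X_{k-1}$, write $\lambda=\lambda_{\hat Q_{k-1},m}$. Then
\[
\Exp_P\bigl[E^{\hat Q_{k-1},m}(X_k-\theta)\mid X_{1:k-1}\bigr] = 1+\lambda\bigl(f_P(\theta+m+1)-f_P(\theta+m)\bigr).
\]
Since $\theta+m\ge\theta$, unimodality gives $f_P(\theta+m+1)\le f_P(\theta+m)$. The symmetric computation for the minus branch uses $f_P(\theta-m-1)\le f_P(\theta-m)$.

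One point needs care: the amplitude must be nonnegative for this to give an expectation at most $1$. If the empirical PMF has $\hat f(m+1)<\hat f(m)$, then \eqref{eqn:choice-of-lambda} is negative. I would therefore use the convention $\lambda^+=\max(\lambda,0)$, i.e.\ the factor equals $1$ when no violation is observed. This is the same convention needed to make \cref{lem:oracle-power1} hold under the null. It does not affect the growth argument, because under $Q$ the empirical amplitude converges to the strictly positive oracle value. Positivity of each factor holds since $|\lambda|<1/2$.

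With this, each $J_n^{m\pm}(\theta)$ is a nonnegative supermartingale with initial value $1$. The weights satisfy $\sum_{m\ge0}2\cdot 2^{-m-2}=1$, so the countable mixture is a supermartingale with $J_0=1$ by monotone convergence (Tonelli) for conditional expectations.

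\textbf{Alternative.} Suppose $Q\notin\cD_\theta$. Then either there is $m'\ge\theta$ with $f_Q(m'+1)>f_Q(m')$, or there is $m'\le\theta$ with $f_Q(m'-1)>f_Q(m')$.

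In the first case, let $m=m'-\theta\ge0$. The $Y_k$ are i.i.d.\ with PMF $g(j)=f_Q(\theta+j)$, which has an increase at $m$. Applying \cref{lem:oracle-power1} to the $Y$ stream gives
\[
\liminf_{n\to\infty} \tfrac1n\log J_n^{m+}(\theta)\ge \frac{(f_Q(m'+1)-f_Q(m'))^2}{2(f_Q(m')+f_Q(m'+1))}.
\]
The only change from the lemma's setting is that the $Y_k$ live on $\mathbb Z$ rather than $\mathbb Z^{\ge0}$. This is irrelevant, because the factor depends on $Y_k$ only through the indicators of $\{Y_k=m\}$ and $\{Y_k=m+1\}$. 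The second case is identical for the $Z$ stream with $m=\theta-m'$.

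Finally, $\log J_n(\theta)\ge \log J_n^{m\pm}(\theta)-(m+2)\log 2$ for each fixed $m$, and the constant vanishes after dividing by $n$. Taking the supremum over all admissible $m'$ on both sides yields \eqref{eqn:uni-growth}.

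\textbf{Main obstacle.} The growth bound is immediate from the earlier lemma. The step that needs real care is the null validity, specifically the sign of the plug-in amplitude as discussed above. Beyond that, the remaining work is bookkeeping of the index shift $m'\leftrightarrow m$ on the two sides of $\theta$, including the boundary term $m'=\theta$. That term belongs to both suprema, and it is covered by $m=0$ in the plus branch (for the $m'\ge\theta$ case) or in the minus branch (for the $m'\le\theta$ case), respectively.
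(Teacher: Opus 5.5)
Your proposal is correct and follows the argument the paper omits as ``straightforward'': apply \cref{lem:oracle-power1} to the shifted stream $X_k-\theta$ and to the reflected stream $\theta-X_k$, then use $\log J_n(\theta)\ge\log J_n^{m\pm}(\theta)-(m+2)\log 2$ exactly as in \cref{thm:power1-monotone}. Your two clarifications are genuinely needed rather than pedantic. Without clipping the plug-in amplitude at $0$, under a null with a strict decrease at the wavelet location the empirical amplitude tends to a negative limit and the factors have conditional mean exceeding $1$ (a point the paper's own \cref{lem:oracle-power1} also overlooks). And $\hat Q_{k-1}$ must be read as the empirical law of the shifted or reflected observations for the stated rate to hold.
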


One naturally questions if unimodality (with unknown $\theta$), not just $\theta$-unimodality for a given $\theta$, can be similarly tested with power one on $\mathbb Z$. If a finite range for the modes is known \emph{a priori}, then the answer is positive by a simple finite infimum argument.

\begin{corollary}[Power-one sequential test of $\Theta$-unimodality] \label[corollary]{cor:Theta}
    Let $\Theta$ be a finite nonempty subset of $\mathbb Z$ and define $\cD_\Theta = \bigcup_{\theta \in \Theta} \cD_\theta$. Then, the process
     \begin{equation}\label{eqn:JnTheta}
      J_n(\Theta) = \min_{\theta \in \Theta} J_n (\theta)
     \end{equation}
     diverges to $\infty$ exponentially fast almost surely under any alternative $Q \in \cP(\mathbb Z) \setminus \cD_\Theta$ and is an e-process under the null $X_1,X_2,\dots \iid P \in \cD_\Theta$.
\end{corollary}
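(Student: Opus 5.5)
The proof has two halves: the null property and the alternative growth. For the null, fix $P \in \cD_\Theta$, so that $P \in \cD_{\theta_0}$ for some $\theta_0 \in \Theta$. By \cref{thm:power1-unimodal}, $(J_n(\theta_0))$ is a nonnegative supermartingale under $P$ with initial value $J_0(\theta_0) = \sum_{m \ge 0} 2^{-m-2}\cdot 2 = 1$. For any stopping time $\tau$, possibly infinite, we then have
\[
\Exp_P J_\tau(\Theta) \le \Exp_P J_\tau(\theta_0) \le 1 .
\]
This follows from $J_n(\Theta) \le J_n(\theta_0)$ pointwise together with optional stopping, using Fatou's lemma to handle $\tau = \infty$ via the almost-sure supermartingale limit. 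Hence $(J_n(\Theta))$ is an e-process for $\cD_\Theta$. It is generally not a supermartingale, because the minimizing $\theta$ may change with $n$. This is why the statement only claims the e-process property, and the argument above needs nothing more.

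For the alternative, take $Q \notin \cD_\Theta$. Then $Q \notin \cD_\theta$ for every $\theta \in \Theta$. For each such $\theta$, \cref{thm:power1-unimodal} supplies a rate $r_\theta > 0$, the right-hand side of \eqref{eqn:uni-growth}, and an almost-sure event $\Omega_\theta$ on which
\[
\liminf_{n} \frac{\log J_n(\theta)}{n} \ge r_\theta .
\]
Since $\Theta$ is finite, $\bigcap_{\theta \in \Theta} \Omega_\theta$ still has probability one. On this event,
\[
\liminf_n \frac{1}{n}\log \min_{\theta} J_n(\theta) = \min_{\theta} \liminf_n \frac{1}{n}\log J_n(\theta) \ge \min_{\theta \in \Theta} r_\theta > 0 ,
\]
where the equality uses that a liminf of a finite minimum equals the minimum of the liminfs. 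This gives exponential divergence with rate $\min_\theta r_\theta$.

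The only real care point is the finiteness of $\Theta$, which enters twice: a finite intersection of almost-sure events is almost sure, and a minimum of finitely many positive rates is positive. With infinitely many $\theta$, the rates $r_\theta$ could accumulate at zero, which explains the restriction in the statement. Apart from that, the one step needing a line of justification is that the minimum of supermartingales, only one of which is known to be a supermartingale under $P$, is dominated by that one. This is immediate from the pointwise bound above.
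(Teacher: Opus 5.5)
Your proposal is correct and is the ``simple finite infimum argument'' the paper alludes to without spelling out. Under the null, $J_n(\Theta)$ is dominated pointwise by the supermartingale $J_n(\theta_0)$ of a true mode $\theta_0\in\Theta$; under the alternative, \cref{thm:power1-unimodal} applied to each of the finitely many $\theta\in\Theta$ gives the positive rate $\min_\theta r_\theta$. (Minor aside: a countable intersection of almost-sure events is still almost sure, so finiteness of $\Theta$ is really needed only to exchange $\liminf$ with the minimum and to keep $\min_\theta r_\theta>0$.)
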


\cref{cor:Theta}, however, fails when $\Theta$ is infinite. To see this, $\lim_{\theta \to \pm \infty} J_n(\theta) = 1$ given arbitrary observations $X_1,\dots, X_n$. Therefore, $\inf_{\theta \in \Theta} J_n(\Theta) = 1$ always holds with infinite $\Theta$, precluding the analog of \cref{cor:Theta} that tests unimodality with completely unrestricted mode ($\Theta = \mathbb Z$). We shall later develop a power-one test for $\cD_{\mathbb Z} = \bigcup_{\theta\in\mathbb Z} \cD_\theta$ in \cref{sec:ooci} via the following two-step procedure: first, use the first observation $X_1$ to construct a strong (defined in the next subsection) finite $(1-2\alpha/3)$-confidence set $\CI(X_1)$ for mode; second, launch the e-process $J_n(\CI(X_1))$ from \cref{cor:Theta} with upcoming observations $X_2,X_3,\dots$ and reject at level $\alpha/3$. We defer this method until \cref{sec:ooci} when we introduce the complete one-observation theory of the problem including $\CI(X_1)$. Our results here, however, lead us to important multiple-observation confidence sets that we discuss in the next subsection.

\subsection{Confidence sequence and consistent mode estimator}

Confidence sets that are valid uniformly over time, called confidence \emph{sequences}, are routinely derived by inverting e-processes \citep[Chapter 13]{ramdas2024hypothesis}. While this also works with our e-processes $J_n(\theta)$ defined in \cref{thm:power1-unimodal}, we raise a caveat here on the important distinction between a \emph{strong} and \emph{weak} confidence set for mode, as modes of a unimodal distribution (per our definition $\cD_\theta$) need not be unique.

\begin{definition}[Weak vs.\ strong mode confidence sets] \label[definition]{def:all-modes-CI}
We denote by $\cD_{\mathbb Z} := \bigcup_{\theta \in \mathbb Z} \cD_\theta$ the set of all unimodal distributions on $\mathbb Z$. For $P \in \cD_{\mathbb Z}$, define $M(P)$ the set of all modes of $P$:
\begin{equation}
    M(P) = \{ \theta \in \mathbb Z : P \in \cD_\theta \}.
\end{equation}
Then,
$\CI_n$ is said to be a \underline{weak} $(1-\alpha)$-confidence set for mode if
\begin{equation}
  \forall P \in \cD_{\mathbb Z}, \forall \theta \in M(P),  P(\theta \in \CI_n) \ge 1-\alpha;
\end{equation}
$(\CI_n)$ is said to form a \underline{weak} $(1-\alpha)$-confidence sequence for mode if
\begin{equation}
  \forall P \in \cD_{\mathbb Z}, \forall \theta \in M(P),  P(\forall n, \theta \in \CI_n) \ge 1-\alpha.
\end{equation}
Further, $\CI_n$ is said to be a \underline{strong} $(1-\alpha)$-confidence set for mode if
\begin{equation}
  \forall P \in \cD_{\mathbb Z}, P(M(P) \subseteq \CI_n) \ge 1-\alpha;
\end{equation}
$(\CI_n)$ is said to form a \underline{strong} $(1-\alpha)$-confidence sequence for mode if
\begin{equation}
  \forall P \in \cD_{\mathbb Z}, P(\forall n, M(P) \subseteq \CI_n) \ge 1-\alpha.
\end{equation}
\end{definition}

It is easy to see that $M(P)$ is always a finite interval (i.e.\ a contiguous set) in $\mathbb Z$. Consequently, the convex hull (denoted by $\overline{A}$, e.g.\ $\overline{\{1,3\}} = \{1,2,3\}$) of a weak $(1-\alpha/2)$-confidence set (resp.\ sequence) is a strong $(1-\alpha)$-confidence set (resp.\ sequence) for mode, via a union bound on the smallest and the largest modes. If it is known \emph{a priori} that the underlying distribution contains exactly one mode, i.e.\ $|M(P)| = 1$, then it makes sense to use the definite form ``\emph{the} mode'' and weak confidence sets and sequences for mode become confidence sets and sequences for \emph{the} mode.

The following confidence sequence now follows from the standard construction.

\begin{corollary}
    The sets $\CI_n^\alpha = \{\theta \in \mathbb Z : J_n(\theta) \le 1/\alpha \}$ where $J_n(\theta)$ is defined in \cref{thm:power1-unimodal} form a weak $(1-\alpha)$-confidence sequence for mode. Consequently, the sets $\overline{\CI_n^{\alpha/2}}$ form a strong $(1-\alpha)$-confidence sequence for mode.
\end{corollary}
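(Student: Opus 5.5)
The plan is the standard inversion of a family of e-processes (here nonnegative supermartingales) into a confidence sequence, followed by the convex-hull/union-bound step described after \cref{def:all-modes-CI}.

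\textbf{Weak confidence sequence.} Fix $P \in \cD_{\mathbb Z}$ and any $\theta \in M(P)$, so that $P \in \cD_\theta$. By \cref{thm:power1-unimodal}, $(J_n(\theta))_{n\ge 0}$ is a nonnegative supermartingale under $X_1,X_2,\dots \iid P$, with $J_0(\theta) = 1$. To see the initial value, note that $E^{\hat Q_0, m}$ is built from an empty empirical measure. I will adopt the convention that $\hat Q_0$ gives $\lambda = 0$, or equivalently that the empty product is $1$, so each $J_0^{m\pm}(\theta) = 1$ and $\sum_m 2^{-m-2}\cdot 2 = 1$. Ville's inequality then gives
\[
P(\exists n : J_n(\theta) > 1/\alpha) \le P(\exists n: J_n(\theta) \ge 1/\alpha) \le \alpha .
\]
On the complementary event, $J_n(\theta) \le 1/\alpha$ for all $n$, which means $\theta \in \CI_n^\alpha$ for all $n$. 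This is exactly the weak confidence sequence property.

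The only point I expect to need care is confirming that each summand is a genuine supermartingale under $P\in\cD_\theta$, so that the mixture is one by Tonelli/monotone convergence for conditional expectations of nonnegative series. Take $J_n^{m+}$ with $m \ge 0$. The multiplier $E^{\hat Q_{k-1},m}(X_k-\theta)$ is predictable-parameterized. Its conditional mean is
\[
1+\lambda_{k}\bigl(f_P(\theta+m+1)-f_P(\theta+m)\bigr) \le 1,
\]
because $\lambda_k \ge 0$ and $f_P$ is nonincreasing to the right of $\theta$. Here $\lambda_k$ should be taken as the positive part of \eqref{eqn:choice-of-lambda} evaluated at the appropriately shifted empirical PMF. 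I read the theorem's claim as already encoding this. The case $J_n^{m-}$ is symmetric. Since these facts are asserted in \cref{thm:power1-unimodal}, I will simply invoke it.

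\textbf{Strong confidence sequence.} For $P\in\cD_{\mathbb Z}$, the remark after \cref{def:all-modes-CI} says that $M(P)$ is a finite contiguous set $\{\theta_{\min},\dots,\theta_{\max}\}$. Apply the weak property at level $\alpha/2$ separately to $\theta_{\min}$ and to $\theta_{\max}$. A union bound then gives, with probability at least $1-\alpha$, that both $\theta_{\min}$ and $\theta_{\max}$ lie in $\CI_n^{\alpha/2}$ for every $n$. On that event, the convex hull $\overline{\CI_n^{\alpha/2}}$ contains every integer between them, so it contains $M(P)$ for all $n$. This is the strong confidence sequence property.

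The main obstacle is bookkeeping rather than substance: the initial value $J_0(\theta)=1$ and the sign convention for $\lambda$ at sample points where the empirical estimate does not indicate a violation. Once \cref{thm:power1-unimodal} is granted, everything else is Ville's inequality plus the union bound.
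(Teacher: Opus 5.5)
Your proof is correct and follows the paper's route: the paper simply cites Ville's inequality for the e-processes $J_n(\theta)$ with $\theta \in M(P)$, and gets the strong version from a union bound on the smallest and largest modes followed by taking convex hulls, as in the remark after \cref{def:all-modes-CI}. Your caveat is a fair point that the paper leaves implicit: for $J_n(\theta)$ to be a null supermartingale, the plug-in $\lambda$ must be truncated at $0$, with some convention for $\hat Q_0$ and $0/0$, because a negative empirical $\lambda$ breaks the supermartingale property asserted in \cref{thm:power1-unimodal}.
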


\begin{proof}
   This is due to Ville's inequality for e-processes \citep[Fact 7.6]{ramdas2024hypothesis}.
\end{proof}

A more interesting application of this routine is the following
 consistent \emph{set-valued mode estimator} that almost surely settles to the ground truth within bounded domains.
\begin{corollary}
    Define
    \begin{equation}
       \widehat M_n(P) = \{ \theta \in \mathbb Z : J_n(\theta) \le n^2 \}
    \end{equation}
    where $J_n(\theta)$ is defined in \eqref{eqn:Jn}. Let $P$ be a unimodal distribution on $\mathbb Z$ and $M(P)$ be its nonempty mode set.
    Then, almost surely, for every bounded subset $ B \subseteq \mathbb Z$:
    \begin{equation}
      \text{there exists } N(B) \text{ such that for all }n \ge N(B), \;     \widehat M_n(P) \cap B  = M(P)  \cap B.
    \end{equation}

\end{corollary}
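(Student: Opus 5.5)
Since $B$ is bounded, it contains only finitely many integers. It therefore suffices to prove, for each fixed $\theta \in \mathbb Z$, that almost surely $\theta \in \widehat M_n(P)$ for all large $n$ when $\theta \in M(P)$, and $\theta \notin \widehat M_n(P)$ for all large $n$ when $\theta \notin M(P)$. A countable intersection of almost-sure events (over $\theta \in \mathbb Z$) then gives a single almost-sure event on which, for every bounded $B$, we can take $N(B)$ to be the maximum of the finitely many thresholds for $\theta \in B$. So the proof splits into two cases.

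\emph{Case $\theta \notin M(P)$.} Here $P \in \cP(\mathbb Z)\setminus \cD_\theta$. By \cref{thm:power1-unimodal}, $\liminf_n n^{-1}\log J_n(\theta) \ge c_\theta > 0$ almost surely. Since $n^2 = e^{2\log n}$ grows subexponentially, $J_n(\theta) > n^2$ eventually, so $\theta \notin \widehat M_n(P)$ for all large $n$.

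\emph{Case $\theta \in M(P)$.} Now $P \in \cD_\theta$, so $J_n(\theta)$ is a nonnegative supermartingale with $J_0(\theta) = 1$ under $P$ (again by \cref{thm:power1-unimodal}). We need $J_n(\theta) \le n^2$ eventually almost surely. There are two routes. The first is to use Ville's inequality on dyadic blocks: $P(\sup_{n \ge 2^j} J_n(\theta) > 4^{j}) \le 4^{-j}$. Since this is summable in $j$, Borel--Cantelli says that almost surely, for all large $j$, $J_n(\theta) \le 4^j$ for all $n \ge 2^j$. But for $n \in [2^j, 2^{j+1})$ this only gives $J_n \le 4^j \le n^2$, which is exactly what is needed; indeed, applying the bound at scale $j$ for $n\in[2^j,2^{j+1})$ yields $J_n(\theta)\le 4^j\le n^2$. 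A simpler second route is to note that a nonnegative supermartingale converges almost surely to a finite limit, so $\sup_n J_n(\theta) < \infty$ almost surely, and hence $J_n(\theta) \le n^2$ for all large $n$. I would present this convergence-theorem argument as the main one, since it is clean.

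The main subtlety is the uniformity over $\theta$. The statement is restricted to bounded $B$ precisely because infinitely many $\theta$ cannot be handled simultaneously: for instance, $J_n(\theta)\to 1 \le n^2$ as $|\theta|\to\infty$, so spurious far-away $\theta$ lie in $\widehat M_n(P)$. Handling the uniformity via countable intersection and finite maxima, as described in the first paragraph, resolves this. One should also check that $J_n(\theta)$ is a genuine supermartingale for $\theta$ being \emph{any} mode, not just a specific one; this holds by the definition of $M(P)$, since $\theta \in M(P)$ means exactly $P \in \cD_\theta$.
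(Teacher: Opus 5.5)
Your proof is correct, and its skeleton matches the paper's. The non-mode case uses the exponential growth from \cref{thm:power1-unimodal} exactly as the paper does. Your countable intersection over $\theta \in \mathbb Z$, followed by a finite maximum over $\theta \in B$, is equivalent to the paper's intersection over the countably many bounded sets $B$.

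The genuine difference is in the mode case.
\begin{itemize}
\item The paper uses only the fixed-time bound $\Exp_P J_n(\theta) \le 1$. Markov gives $P(J_n(\theta) > n^2) \le n^{-2}$, and Borel--Cantelli finishes because $\sum_n n^{-2} < \infty$. This summability is exactly why the threshold is $n^2$.
\item You instead use the full supermartingale structure: Doob's convergence theorem, or more directly Ville's inequality $P(\sup_n J_n(\theta) \ge c) \le 1/c$. This gives $\sup_n J_n(\theta) < \infty$ almost surely.
\end{itemize}
Your route asks for more structure. The paper's argument would go through for any sequence of fixed-$n$ e-values, not necessarily an e-process. In return, your route gives more. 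Any threshold $c_n \to \infty$ with $\log c_n = o(n)$, e.g.\ $c_n = \log n$, yields the same consistency result. The paper's route requires $\sum_n 1/c_n < \infty$, so it forces a more conservative estimator.

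Your closing remark about far-away $\theta$ is also right, and it is sharper than the paper's own discussion. Since $J_n(\theta) \to 1$ as $|\theta| \to \infty$ and $n^2 \ge 4$ for $n \ge 2$, the set $\widehat M_n(P)$ is infinite for every $n \ge 2$. Because $M(P)$ is finite, the clipping to a bounded $B$ cannot be removed for this particular estimator. In particular, the unclipped settlement the paper speculates about cannot hold for $\widehat M_n$ as defined.
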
 
\begin{proof}
    First, let $\theta \in M(P)$. By Markov's inequality, $\Pr(\theta \in  \widehat M_n(P) ) \ge 1- 1/n^2$. Therefore by Borel-Cantelli lemma, $\theta \in  \widehat M_n(P)$ for all but finitely many $n$ almost surely. Since $M(P)$ is finite, we see that  $M(P) \subseteq  \widehat M_n(P) $  for all but finitely many $n$ almost surely. 
    
    Second, let $\theta \notin M(P)$. So $P \notin \cD_{\theta}$. Since $J_n(\theta)$ grows exponentially fast, $\theta \notin  \widehat m_n(P)$ for all but finitely many $n$ almost surely.  Since $B \setminus M(P)$ is finite,  we see that  $B \setminus M(P) \subseteq B \setminus  \widehat M_n(P)$  for all but finitely many $n$ almost surely.

    We have thus showed that  $B \setminus M(P) = B \setminus  \widehat M_n(P)$  for all but finitely many $n$ almost surely. The desired result follows from the fact that there are only countably many such subsets $B \subseteq \mathbb Z$.
\end{proof}

We remark that the bounded ``clipping set'' $B$ exists in the statement because our growth statement \eqref{eqn:uni-growth} does not rule out the possibility that different non-modes $\theta \notin M(P)$ escape from $ \widehat M_n(P) $ \emph{at unboundedly different times}. We do, however, speculate that the unclipped settlement: 
\begin{equation}
    \text{there exists } N \text{ such that for all }n \ge N, \;     \widehat M_n(P) =  M(P) 
\end{equation}
also holds true almost surely. To show this, we need a \emph{uniform growth} condition on the family of \emph{all alternative} test supermartingales $\{ (J_n(\theta)) : \theta \notin M(P) \}$, instead of each single one (which is the statement \eqref{eqn:uni-growth}). This is a rather profound problem and (together with the more general problem of ``how to control the size of confidence sets inverted from a family of e-values'') we do not pursue it in this paper.



\section{One-observation landscape of monotonicity and unimodality on $\mathbb Z$}\label{sec:1obs}

We have so far developed some testing and estimation methods for $\cM$ and $\cD_{\theta}$ given an infinite stream of observations. We have also left open some questions that either may strengthen these sequential results or are of independent interest. What are the e-value sets $\fM$ and $\fD_{\theta}$? Are there more e-powerful e-values than the ``wavelets'' in \cref{lem:log-power-monotone-evalue}? How to construct the finite confidence set $\CI(X_1)$ mentioned in the remark following \cref{cor:Theta}?
These questions 
warrant a close-up study of the \emph{one}-observation landscape of distribution classes $\cM$ and $\cD_{\theta}$.

Our exposition in this section draws heavily upon the convex analysis and duality machinery developed recently by \cite{numeraire}. Let us introduce some additional notations.

\paragraph{Notations, continued.} 
We begin to consider measures that are not necessarily probabilities. The set of all $\sigma$-finite measures over $\mathbb A$ is denoted by $\cS(\mathbb A)$, and the set of all subprobability measures by $\cP^{-} (\mathbb A)$. For $P \in \cS(\mathbb Z)$, we still denote by $f_{P}$ the measure mass function (MMF) of $P$, i.e.\ $f_{P}(n) = P(\{n\})$. Subsets of $\cS(\mathbb A)$ are denoted by caligraphic letters $\cA, \cB,$ etc., and subsets of $\fF(\mathbb A)$ by boldface letters $\fA, \fB$, etc. We also adopt the following operators that map back and forth between $\cS(\mathbb A)$ and $\fF(\mathbb A)$.
\begin{definition}[Polar, prepolar, and bipolar]
    Let $\cA \subseteq \cS(\mathbb A)$. Define
    \begin{equation}
        \cA^\circ = \left\{ f \ge 0 : \sup_{P \in \cA} \int f(x)P(\d x) \le 1  \right\}.
    \end{equation}
    Let $\fA \subseteq \fF(\mathbb A)$. Define
    \begin{equation}
        {^\circ \fA} = \left\{ P \in  \cS(\mathbb A): \sup_{f \in \fA } \int f(x)P(\d x) \le 1  \right\}, \quad ^\circ_1 \fA =  ({^\circ\fA}) \cap \cP(\mathbb A).
    \end{equation}
\end{definition} 
The sets $\cA^\circ$ and ${^\circ \fA}$ are called \emph{polar} and \emph{prepolar} in the convex analysis literature. See \cite{numeraire} for example. In particular, if $\cA \subseteq \cP(\mathbb A)$, $\cA^\circ$ is the set of \emph{all} one-observation e-values for $\cA$. Dually, $^\circ_1 \fA$ is the set of all probability distributions for which all functions in $\fA$ are e-values. The set ${^\circ(\cA^\circ)}$ obtained from concatenating both operations is called the \emph{bipolar} of the set $\cA$, and ${^\circ_1(\cA^\circ)}$, the set of all probabilities in the bipolar, is of statistical interest as it contains distributions indistinguishable from $\cA$: an e-value for $\cA$ is always an e-value for ${^\circ_1(\cA^\circ)}$ under any $Q \in {^\circ_1(\cA^\circ)} \setminus \cA$. In fact, any \emph{test} with type-I error level $\alpha$ for the null $\cA$ has type-II error at least $1-\alpha$ under any $Q \in {^\circ_1(\cA^\circ)} \setminus \cA$, because every test at type-I error level $\alpha$ corresponds to the level set of some e-value \citep[Fact 3.6]{ramdas2024hypothesis}. Finally, it is easy to see that (1) $\cA \subseteq {^\circ(\cA^\circ)}$ and $\fA \subseteq ({^\circ \fA})^\circ$, (2) $\cA \subseteq \cB \implies \cB^\circ \subseteq \cA^\circ$ and $\fA \subseteq \fB \implies {^\circ\fB} \subseteq {^\circ \fA}$.


\subsection{Polar of $\cM$}\label{sec:monotone}

We investigate $\fM = \cM^\circ $, the set of all one-observation e-values for (i.e.\ polar of) the monotone class
\begin{equation}
    \cM = \left\{ P \in \cP(\mathbb Z^{\ge 0}) : f_P(n) \ge   f_P (n+1) \text{ for all }n \in  \mathbb Z^{\ge 0}   \right\}.
\end{equation}
To this end, let us define the set of all nonnegative, increasing, sub-diagonal functions on $\mathbb Z^{\ge 0}$:
\begin{equation}
    \fR:= \{\rho \in \fF(\mathbb Z^{\ge 0}) : 0 = \rho(0) \le \rho(1) \le \dots; \rho(n) \leq n \text{ for all } n \}
\end{equation}
The function class $ \fM \subseteq \fF(\mathbb Z^{\ge 0})$ that is the polar of $\cM$ turns out to be the \emph{finite differences} (i.e.\ discrete derivatives) of those functions in $\mathbf R$:
\begin{equation}\label{eqn:set-M}
    \left\{ n \mapsto \rho(n+1) -\rho(n) : \ \rho \in \fR \right\}.
\end{equation}
In words, every $\rho \in \mathbf R$ gives rise to a corresponding e-value $\Delta\rho \in \fM $ by taking differences. The equality of $\fM$ and $\cM^\circ$ is proved below. To reduce parentheses clutter we write $\rho_n$ for $\rho(n)$ when $\rho \in \fR$.

\begin{theorem}\label{thm:polar-of-M}
    $\fM =  \left\{ n \mapsto \rho(n+1) -\rho(n) : \ \rho \in \fR \right\}$. That is, functions in the form of $\rho_{n+1} -\rho_n$ are exactly all e-values testing monotonicity.
\end{theorem}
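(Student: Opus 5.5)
The plan is to use the standard extreme-point description of $\cM$: every monotone PMF on $\mathbb Z^{\ge 0}$ is a mixture of the uniform distributions $U_k$ on $\{0,\dots,k-1\}$, $k \ge 1$. Concretely, $f_P(n) = \sum_{k > n} w_k/k$ with $w_k = k(f_P(k-1)-f_P(k)) \ge 0$ and $\sum_k w_k = 1$. Because $E \ge 0$, Tonelli's theorem gives $\Exp_P E = \sum_k w_k \Exp_{U_k} E$. So $E \in \fM$ if and only if $\Exp_{U_k}E \le 1$ for every $k$, i.e.
\begin{equation}
    \sum_{n=0}^{k-1} E(n) \le k \quad \text{for all } k \ge 1 .
\end{equation}
Necessity holds because each $U_k$ lies in $\cM$; sufficiency is the mixture identity just stated. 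The only care needed is to verify that the weights $w_k$ sum to one. This follows by summation by parts, using $f_P(n)\to 0$; one also needs $k f_P(k) \to 0$, which holds for monotone summable sequences since $k f_P(k) \le 2\sum_{j > k/2} f_P(j) \to 0$.

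With this reduction the theorem becomes a telescoping identity. Given $\rho \in \fR$, set $E(n) = \rho_{n+1}-\rho_n$. Then $E \ge 0$ because $\rho$ is nondecreasing, and $\sum_{n<k} E(n) = \rho_k - \rho_0 = \rho_k \le k$, so $E \in \fM$.

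Conversely, given $E \in \fM$, define $\rho_k = \sum_{n<k} E(n)$, with $\rho_0 = 0$. Then $\rho$ is nondecreasing because $E \ge 0$, and $\rho_k \le k$ by the displayed criterion. Hence $\rho \in \fR$ and $E = \Delta\rho$.

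I expect no serious obstacle. The only delicate point is the mixture representation on the infinite support, namely the tail and boundary terms in the summation by parts. A clean way to handle it is to write $f_P(n) = \sum_{k>n}(f_P(k-1)-f_P(k))$ directly, then exchange the order of summation in $\sum_n E(n) f_P(n)$ by Tonelli. This yields
\begin{equation}
    \Exp_P E = \sum_{k\ge1}(f_P(k-1)-f_P(k))\,\rho_k \le \sum_k k\,(f_P(k-1)-f_P(k)) = \sum_{n} f_P(n) = 1 .
\end{equation}
This route proves sufficiency without needing to check separately that the $w_k$ sum to one.
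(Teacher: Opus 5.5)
Your proof is correct, and its necessity half (testing $E$ against the uniform distributions $U_k$ on $\{0,\dots,k-1\}$ and setting $\rho_k=\sum_{n<k}E(n)$) is exactly the paper's. The sufficiency half takes a different route.

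The paper never decomposes $P$. It writes $E(n)=(\rho_{n+1}-(n+1))-(\rho_n-n)+1$ and sums by parts over $\{0,\dots,N-1\}$. It then bounds each partial sum by $1$ using the termwise inequality $(\rho_{n+1}-(n+1))(f_P(n)-f_P(n+1))\le 0$ and the boundary term $(\rho_N-N)f_P(N)\le 0$, and lets $N\to\infty$.

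You instead write $P=\sum_k w_kU_k$ and obtain the exact identity $\Exp_P E=\sum_{k\ge1}(f_P(k-1)-f_P(k))\rho_k$ from Tonelli, so the infinite series is handled by nonnegativity rather than truncation. The final equality in your last display is this same identity with $E\equiv1$, so you are right that no tail estimate like $kf_P(k)\to0$ is needed.

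The two computations are algebraically equivalent, since your interchange of summation is Abel summation. Your framing, however, explains why uniforms suffice. Every member of $\cM$ is a mixture of the $U_k$, which themselves lie in $\cM$, so the polar of $\cM$ equals the polar of $\{U_k\}$. Both directions then reduce to the single criterion $\sum_{n<k}E(n)\le k$.

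This viewpoint transfers well to the paper's other settings:
\begin{itemize}
\item By the layer-cake decomposition, every $\theta$-unimodal PMF is a mixture of uniforms on integer intervals containing $\theta$. This gives sufficiency of the criterion $\rho_n+\eta_m\le n+m$ from the paper's appendix treatment of $\cD_\theta$ at once.
\item In the continuous case it is the Khintchine-type representation behind the paper's remark that $\fU$ describes e-values for all uniforms on $[0,x]$.
\end{itemize}
The paper's termwise argument, for its part, needs no representation result and serves as the discrete template for its integration-by-parts proof of the continuous polar.
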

\begin{proof} Let $\Delta \fR = \left\{ n \mapsto \rho(n+1) -\rho(n) : \ \rho \in \fR \right\}$.
First, suppose $E \in \Delta \fR$ with $E = \Delta \rho$, $\rho \in \fR$. For any $P \in \cM$, we have
\begin{align}
    & \sum_{n=0}^{N-1} (\rho_{n+1} - \rho_{n}) f_P(n) =   \sum_{n=0}^{N-1} \{ (\rho_{n+1} - (n+1)) - (\rho_n - n) +  1\} f_P(n)
    \\
    = & \sum_{n=0}^{N-1} f_P(n) + \sum_{n=0}^{N-1} \{ (\rho_{n+1} - (n+1))f_P(n) - (\rho_n - n)f_P(n)  \}
    \\
    \le &  \sum_{n=0}^{N-1} f_P(n) + \sum_{n=0}^{N-1}\{ (\rho_{n+1} - (n+1))f_P(n+1) - (\rho_n - n)f_P(n)  \}
    \\
    = &  \sum_{n=0}^{N-1} f_P(n) +  (\rho_N - N) f_P(N) - \rho_0 f_P(0) \le 1.
\end{align}
So $\Expw_{X \sim P}(E(X)) = \sum_{n=0}^{\infty} (\rho_{n+1} - \rho_{n}) f_P(n) \le 1$. Therefore, $E \in \cM^\circ$. This shows that $\Delta \fR \subseteq \cM^\circ$.

Second, suppose $E \in \cM^\circ$. Define the increasing sequence
    \begin{equation}
        \rho_{n} = \sum_{k=0}^{n-1} E(k)
    \end{equation}
    which satisfies $E = \Delta \rho$ and $\rho_0 = 0$.
    Now, for any $n \in \mathbb Z^{\ge 0}$, consider $P_n = \operatorname{unif}_{\{ 0,\dots, n-1 \}} \in \cM$. Since $E \in \cM^\circ$, $E$ is an e-value for $P_n$:
    \begin{equation}
        1 \ge \sum_{k=0}^\infty E(k) f_{P_n}(k) = \frac{1}{n}  \sum_{k=0}^{n-1} E(k) =  \frac{\rho_n}{n},
    \end{equation}
    concluding that $ \rho_{n}  \le n$. Therefore $\rho \in \fR$, $E \in \Delta \fR$, and consequently $\cM^\circ \subseteq \Delta \fR$.
\end{proof}

We elaborate in \cref{sec:constraints} on how we constructively discover the function set $\fM$ which contains exactly all e-values for $\cM$ behind the scenes. In \cref{sec:power,sec:x+1qx}, we discuss some properties the polar set $\fM = \cM^\circ$.

\subsection{Power, bipolar, and testability of $\cM$}\label{sec:power}

Every e-value $E(X)$ for a distribution $\cA$ naturally leads to a level-$\alpha$ test where $\cH_0: P \in \cA$ is rejected if $E(X) \ge 1/\alpha$ upon observing $X \sim P$. This guarantees a type-I error at most $\alpha$ due to Markov's inequality. Conversely, if $T(X) \in \{ 0, 1\}$ is a test with type-I error at most $\alpha$:
\begin{equation}
    \sup_{P \in \cA} P( \{ x:T(x) = 1\} ) \le \alpha,
\end{equation}
the ``all-or-nothing'' e-value $\alpha^{-1} T(X)$ leads to the exact same test.
Thus, the characterization of the set $\fM = \cM^\circ$ describes all tests for monotonicity: $T:\mathbb Z^{\ge 0} \to \{ 0, 1\}$ is a valid test of level-$\alpha$ for the null $\cM$ if and only if $T(x) = \id_{ \{ E(x) \ge 1/\alpha  \} }$ for some $E \in \fM$.

However, this
does not answer if any of these tests are powerful if the data-generating distribution $P \notin \cM$. To this end, we need to consider the prepolar of $\fM$ i.e.\ the bipolar of $\cM$.
If there exists a probability distribution $P \in {^\circ_1\fM} \setminus \cM$, then on a fundamental level, $P$, though non-monotone, is indistinguishable from monotone distributions; no level-$\alpha$ test testing the null of monotonicity may reject the null with probability $>\alpha$ if the true distribution is $P$. The following theorem answers this question in the negative. 

\begin{theorem}[Strong testability of monotonicity]\label{thm:bipolar-monotone}
    $^\circ\fM = {^\circ(\cM^\circ)}$, the bipolar of $\cM$, is the set of all subprobabilities on $\mathbb Z^{\ge 0}$ that are bounded by some monotone probability:
    \begin{equation}
        \cM^- = \{ P \in \cP^{-}(\mathbb Z^{\ge 0}) : P \le P' \text{ for some }P' \in  \cM  \}.
    \end{equation}
    Therefore, ${^\circ_1(\cM^\circ)} = \cM$, meaning that there is no non-monotone distribution that is indistinguishable from monotone distributions on $\mathbb Z^{\ge 0}$.
\end{theorem}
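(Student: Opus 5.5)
We prove the two inclusions $\cM^- \subseteq {^\circ\fM}$ and ${^\circ\fM} \subseteq \cM^-$, and then read off ${^\circ_1(\cM^\circ)} = \cM$.

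\textbf{The easy inclusion.} Let $P \le P' \in \cM$. Every $E \in \fM$ is nonnegative, so
\[
\int E \, \d P \le \int E \, \d P' \le 1 .
\]
Hence $P \in {^\circ\fM}$. We also need ${^\circ\fM}$ to consist of subprobabilities. This holds because the constant function $1 = \Delta\rho$ with $\rho_n = n$ lies in $\fM$, so any $P \in {^\circ\fM}$ has total mass at most $1$.

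\textbf{The hard inclusion.} Take $P \in {^\circ\fM}$ and test it against suitable members of $\fM$, using the explicit description from \cref{thm:polar-of-M}.

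Let $F(n) = \sum_{k<n} f_P(k)$. For a fixed $N$, choose
\[
\rho_n = \min(n, N) \in \fR .
\]
Then $\Delta\rho = \id_{\{x < N\}}$, which only gives total mass $\le 1$. That is not enough, so more general $\rho$ are needed.

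The natural guess is that the condition $\int \Delta\rho \, \d P \le 1$ for all $\rho \in \fR$ says exactly that the least concave majorant $\hat F$ of $F$ satisfies $\hat F(n) \le 1$. Here $\hat F$ is taken on $\mathbb Z^{\ge 0}$, anchored at $\hat F(0) = 0$, and capped appropriately.

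Summation by parts gives
\[
\int \Delta\rho \, \d P = \sum_n (\rho_{n+1} - \rho_n) f_P(n).
\]
To pass to the majorant, choose $\rho$ to be piecewise of slope $0$ or $1$ between its constraints. For instance, $\rho$ can rise with slope $1$ on a block of size $b$, where it saturates $\rho_n = n$, and stay flat elsewhere. This gives $\Delta\rho = \id$ of a set. More generally, taking $\rho$ to be a "staircase" hugging the diagonal tests the averages of $P$ over initial segments, as in the uniform argument of \cref{thm:polar-of-M}.

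\textbf{Constructing the dominating $P'$.} The target is a monotone $P'$ with $P' \ge P$. Define its PMF as the left derivative $g$ of the least concave majorant of $F$. Since $g$ is a slope of a concave function, it is non-increasing. Since the majorant lies above $F$ and touches it at the kink points, $g$ dominates $f_P$ on average.

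Two facts are needed about $g$.

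1. $g \ge f_P$ pointwise. This should follow because every point with $g(n) < f_P(n)$ would contradict concavity of the majorant at $n$. Concretely, the majorant's increment over $[n, n+1]$ must be at least the increment of $F$ when both endpoints are touching points. Between touching points, one must instead argue that the majorant's linear slope exceeds each local increment. That is not automatic, so I would replace the LCM by the smallest non-increasing sequence dominating $f_P$ pointwise:
\[
g(n) = \sup_{k \ge n} f_P(k).
\]
This $g$ is monotone and satisfies $g \ge f_P$ trivially.

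2. $\sum_n g(n) \le 1$. This is the real content of the proof and is the main obstacle. Write $g$ as a combination of indicators of initial segments:
\[
g = \sum_j c_j \id_{\{0, \dots, a_j - 1\}} .
\]
Then produce a single $\rho \in \fR$ with $\sum_n \Delta\rho(n) f_P(n) \ge \sum_n g(n)$. I would build $\rho$ to increase by $1$ exactly at the record locations $k_j$ where $g$ drops, i.e.\ where $f_P(k_j) = g(k_j)$. Then:
\begin{itemize}
\item $\rho_n \le n$, since at most one unit is added per index;
\item $\Delta\rho$ is the indicator of the record set, which is an e-value in $\fM$.
\end{itemize}
Pairing this indicator with $P$ gives $\sum_j f_P(k_j)$, which is too small, since it counts one point per plateau instead of the full plateau length. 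So I would let $\rho$ jump by $k_j - k_{j-1}$ at $k_j$. This keeps $\rho_n \le n$, because the jumps exactly catch up to the diagonal at each record. Then
\[
\sum_n \Delta\rho(n) f_P(n) = \sum_j (k_j - k_{j-1}) f_P(k_j) = \sum_n g(n),
\]
and membership in ${^\circ\fM}$ forces this to be $\le 1$. If the records are infinite or non-attained (the sup is approached but not achieved), a truncation and limit argument handles it.

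\textbf{Conclusion.} Finally, normalize $g$ to a probability $P' \ge g \ge f_P$ in $\cM$ by adding mass at $0$, which preserves monotonicity. This gives $P \in \cM^-$.

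For the last claim: if $P \in {^\circ_1(\cM^\circ)}$, then $P \le P'$ for some $P' \in \cM$, and both have total mass $1$, so $P = P' \in \cM$. The converse inclusion $\cM \subseteq {^\circ_1(\cM^\circ)}$ is trivial.
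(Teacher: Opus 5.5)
Your final argument is correct and is essentially the paper's proof. Both dominate $f_P$ by the backward running maximum $g(n)=\sup_{k\ge n} f_P(k)$ and bound $\sum_n g(n)\le 1$ by testing $P$ against $\Delta\rho$; your record-jump $\rho$ (with the convention $k_{-1}=-1$) is exactly the paper's $\rho_n=|\{k : s_k<n\}|$ when $s_n$ is chosen as the last maximizer of $f_P$ on $\{n,n+1,\dots\}$. The LCM detour can be dropped, and the ``non-attained supremum'' case cannot occur because $f_P(k)\to 0$ for a subprobability. Your final normalization, adding the missing mass at $0$, makes explicit a step the paper leaves implicit.
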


In a word, monotonicity is a property such that, as a set of probabilities, subprobabilities indistinguishable from it by one observation are necessarily dominated by some member probability in it. We call monotonicity ``strongly testable'' in this sense.
We prove \cref{thm:bipolar-monotone} in \cref{sec:pf-bipolar-M}. The key idea is that, if $P \in {^\circ\fM}$, we consider the ``backward running supremum'' of the PMF of $P$:
\begin{equation}
    f^*(n) = \sup_{k \ge n} f_P(n),
\end{equation}
and show that it is a PMF of some subprobability. On the other hand,  the weaker statement ${^\circ_1(\cM^\circ)} = \cM$ is much easier to prove: if $Q \in \cP(\mathbb Z^{\ge 0}) \setminus \cM$, it suffices to find a function $E \in \fM$ that is not an e-value under $Q$ (``witnesses'' $Q$). Suppose $f_Q(m) < f_Q(m+1)$. One such witness function is the $E^{Q,m}$  defined back in \cref{lem:log-power-monotone-evalue}.

\subsection{The $(X+1)q(X)$ e-values for $\cM$}\label{sec:x+1qx}

We note an interesting subset of $\fM$: functions of the form $(n+1) q(n)$ where $q$ is any (sub)probability mass function. Formally, define the following subset of $\fF( \mathbb Z^{\ge 0} )$,
\begin{equation}
    \fQ = \{ n \mapsto (n+1) f_Q(n) : Q \in \cP^-(\mathbb Z^{\ge 0})  \},
\end{equation}
where the PMF of a subprobability $Q$ is also denoted as $f_Q$.
That is, $\fQ$ consists of all functions that are a product of $n+1$ and some sub-PMF on nonnegative integers.

\begin{proposition}\label[proposition]{thm:discr}
   $\fQ \subseteq \fM$. That is, functions in $\fQ$ are e-values testing monotonicity.
\end{proposition}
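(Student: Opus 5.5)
Two routes are available: invoke \cref{thm:polar-of-M}, or verify the e-value property directly. The direct check is short, so I would use it. Fix $Q \in \cP^-(\mathbb Z^{\ge 0})$ with mass function $q = f_Q$, and any $P \in \cM$ with mass function $p = f_P$. The goal is
\[
\sum_{n\ge 0} (n+1)q(n)p(n) \le 1.
\]
The key observation is that monotonicity forces $p(n) \le 1/(n+1)$. Indeed, since $p$ is non-increasing,
\[
(n+1)p(n) \le \sum_{k=0}^{n} p(k) \le 1 .
\]
Plugging this into the sum gives
\[
\sum_n (n+1)q(n)p(n) \le \sum_n q(n) \le 1 ,
\]
so $(n+1)f_Q(n)$ is an e-value for $P$. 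Since $P \in \cM$ was arbitrary, the function lies in $\cM^\circ = \fM$.

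As a cross-check, I would also write the function explicitly in the form of \cref{thm:polar-of-M}. Set
\[
\rho_n = \sum_{k=0}^{n-1} (k+1)q(k).
\]
Then $\rho_0 = 0$ and $\rho$ is nondecreasing because $q \ge 0$. The remaining condition is $\rho_n \le n$, which holds since
\[
\rho_n \le n \sum_{k<n} q(k) \le n .
\]
Hence $\rho \in \fR$ and $(n+1)q(n) = \rho_{n+1} - \rho_n$, which gives the same conclusion through the characterization theorem.

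I do not expect a genuine obstacle; the only thing to get right is the bound $p(n)\le 1/(n+1)$. Nonnegativity and measurability of the function on $\mathbb Z^{\ge 0}$ are automatic.
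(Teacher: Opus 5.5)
Your proposal is correct. Your cross-check is essentially the paper's proof: the paper takes the same $\rho_n=\sum_{k=0}^{n-1}(k+1)f_Q(k)$, shows $\rho\in\fR$, and invokes \cref{thm:polar-of-M}. It bounds $\rho_n\le n$ by rewriting the sum as $\sum_{k=0}^{n-1}\sum_{\ell=k}^{n-1}f_Q(\ell)$, and your bound $\rho_n\le n\sum_{k<n}q(k)$ is an equally valid, slightly quicker way to reach the same conclusion.

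Your primary argument takes a different route. It checks the definition of $\fM$ directly, using the basic inequality $f_P(n)\le 1/(n+1)$ for $P\in\cM$.

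\begin{itemize}
\item What your direct route buys: it is self-contained and does not need the characterization of $\fM$. It also uses only $f_P(n)\le 1/(n+1)$, so it proves something stronger: every function in $\fQ$ is an e-value for the larger class $\cM'\supseteq\cM$, i.e.\ $\cM'\subseteq{^\circ\fQ}$. This is exactly the first step of the paper's later proof of \cref{prop:xqx-bipolar}.
\item What the paper's route buys: it exhibits the explicit $\rho\in\fR$, placing $\fQ$ inside the structural parametrization of $\fM$ by increasing sub-diagonal sequences, rather than only verifying the e-value inequality.
\end{itemize}
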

\begin{proof}
 Let $E(n) = (n+1) f_Q (n) \in \fQ$ with $Q \in \cP^-(\mathbb Z^{\ge 0})$. With
\begin{equation}
    \rho_n =  \sum_{k=0}^{n-1}(k+1)f_Q(k) =\sum_{k=0}^{n-1} \sum_{\ell = k}^{n-1} f_Q(
\ell
    )   \le \sum_{k=0}^{n-1} 1   = n
\end{equation}
we see that $\rho \in \fR$ and $E = \Delta \rho \in \fM$. 
\end{proof}

The e-value class $\fQ$ is of interest not only because of the succinct expression $(X+1)q(X)$, but also due to its connection to a ``basic inequality'' for monotone distributions:
for any $P \in \cM$, the monotonicity of $f_P$ implies $f_P(n) \le \frac{f_P(0) + \dots + f_P(n)}{n+1} \le \frac{1}{n+1}$ for all $n$. Formally let us define
    \begin{equation}\label{eqn:discrete-bi}
       \cM' = \left\{  P \in \cS(\mathbb Z^{\ge 0}) :  f_P(n)  \le \frac{1}{n+1} \text{ for all }n \in  \mathbb Z^{\ge 0} \right\} \supseteq \cM.
    \end{equation}
The reason for calling it the ``basic inequality'' shall transpire when we later present the continuous analog of this lemma, \cref{lem:basic}, which forms the foundation for many existing results on unimodal distributions in the literature. It is also worth noting that $\cM$ is a proper subset of $\cM' \cap \cP(\mathbb Z^{\ge 0})$. For example, the distribution on $\{ 0,1,2 \}$ with the non-monotone PMF $p(0) = 1/2, p(1) = 3/14, p(2)=4/14$ belongs to $\cM' \cap \cP(\mathbb Z^{\ge 0})$ but not to $\cM$.

The following statement reveals the bipolar relation between measures satisfying the basic inequality $f_P(n) \le 1/(1+n)$ and functions taking $(x+1)q(x)$ form. We prove it in \cref{sec:pf-xqx-bipolar}.

\begin{proposition}\label[proposition]{prop:xqx-bipolar}
    $\cM' = {^\circ\fQ}$ and $\cM'^\circ = \fQ$.
\end{proposition}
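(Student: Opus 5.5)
We prove the two equalities separately, each by a pair of inclusions. Everything is pointwise on $\mathbb Z^{\ge 0}$, and the key identity is that for $P \in \cS(\mathbb Z^{\ge 0})$ and $E(n) = (n+1) f_Q(n) \in \fQ$,
\begin{equation}
  \int E \, \d P = \sum_{n} f_Q(n)\,(n+1) f_P(n).
\end{equation}
This is a pairing between the sub-PMF $f_Q$ and the weighted mass $(n+1)f_P(n)$.

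\textbf{First equality, $\cM' = {^\circ\fQ}$.} If $P \in \cM'$, then $(n+1)f_P(n) \le 1$ for every $n$. The identity gives $\int E\,\d P \le \sum_n f_Q(n) \le 1$ for all $E \in \fQ$, so $P \in {^\circ\fQ}$. Conversely, let $P \in {^\circ\fQ}$. For each $n$, test against the Dirac subprobability $Q = \delta_n$, whose element of $\fQ$ is $E = (n+1)\id_{\{x=n\}}$. This gives $(n+1) f_P(n) \le 1$, i.e.\ $P \in \cM'$.

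\textbf{Second equality, $\cM'^\circ = \fQ$.} The inclusion $\fQ \subseteq \cM'^\circ$ is the first half of the previous paragraph read the other way: every $E \in \fQ$ has $\int E\,\d P \le 1$ for all $P \in \cM'$.

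For the reverse inclusion, let $E \in \cM'^\circ$ and define $f_Q(n) = E(n)/(n+1) \ge 0$, so that $E(n) = (n+1) f_Q(n)$. It remains to show $\sum_n f_Q(n) \le 1$. Consider the measure $P^*$ with $f_{P^*}(n) = 1/(n+1)$. It lies in $\cM'$, and it is $\sigma$-finite since $\mathbb Z^{\ge 0}$ is countable with finite point masses, so $P^* \in \cS(\mathbb Z^{\ge 0})$. Since $E$ is in the polar,
\begin{equation}
  1 \ge \int E\,\d P^* = \sum_n \frac{E(n)}{n+1} = \sum_n f_Q(n).
\end{equation}
Hence $Q \in \cP^-(\mathbb Z^{\ge 0})$ and $E \in \fQ$.

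The only delicate point is that the polar of $\cM'$ is taken over all $\sigma$-finite measures, not only probabilities. This is exactly what admits the non-normalizable extremal measure $P^*$, which is infinite because $\sum 1/(n+1) = \infty$. If one preferred to avoid $P^*$, the same bound follows by testing the finite truncations $f_{P_N}(n) = \id_{\{n \le N\}}/(n+1)$, each in $\cM'$, and letting $N \to \infty$ by monotone convergence.
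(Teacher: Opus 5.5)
Your proposal is correct and follows the paper's proof essentially step for step: the pointwise bound $(n+1)f_P(n)\le 1$ paired with a sub-PMF gives $\cM'\subseteq{^\circ\fQ}$, Dirac masses $\delta_n$ give the reverse inclusion, and the extremal $\sigma$-finite measure with $f(n)=1/(n+1)$ gives $\cM'^\circ\subseteq\fQ$. The only cosmetic differences are that you obtain $\fQ\subseteq\cM'^\circ$ directly from the first computation instead of via $\fQ\subseteq({^\circ\fQ})^\circ$, and that you add an optional truncation argument; both are fine.
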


\subsection{Polar of $\cD_{\theta}$}\label{sec:unimodal}

Monotonicity, whose one-observation landscape we have completely understood from \cref{sec:monotone,sec:power} above, can be seen as half of the conditions for unimodality, which we study in this section. Recall that $\fD_\theta$ is the polar of the $\theta$-unimodal class
\begin{equation}
    \cD_{\theta} = \left\{ P \in \cP(\mathbb Z) :  \text{$f_P(n-1) \le f_P(n)$ if $n \le \theta$ and $f_P(n) \le f_P(n+1)$ if $n \ge \theta$}  \right\}.
\end{equation}
It is worth noting that $\cD_{\theta}$ cannot be formed via elementary transformations from the previously studied monotone class $\cM$. For example, it is tempting to consider the class
\begin{equation}
    \{ L(X) : L((X-\theta)^+) \in \cM, L((\theta - X)^+) \in \cM \}
\end{equation}
($L(X)$ denotes the distribution of a random variable $X$)
which turns out to be a strict superset of $\cD_{\theta}$ as they differ at the mode $\theta$. Therefore, results on the polar and bipolar of $\cM$ proved in \cref{sec:monotone,sec:power} cannot be imported directly to yield those on $\cD_{\theta}$. 

Still, we can use methods similar to the ones used in \cref{sec:monotone,sec:power} to derive analogous statements on the polar and bipolar of the unimodal class $\cD_{\theta}$. We first present $\cD_{\theta}^\circ$, the set of all e-values for $\theta$-unimodal distributions over $\mathbb Z$. 

\begin{theorem}\label{thm:polar-unimodal-Z}
    The polar set $\fD_\theta = \cD_{\theta} ^\circ$, i.e.\ all e-values testing $\theta$-unimodality can be represented as follows:
    \begin{equation}\label{eqn:set-Dtheta}
        \fD_{\theta} = \left\{ n \mapsto \begin{cases}
            \rho_{n-\theta + 1} -\rho_{n-\theta}, & n > \theta \\
            \rho_1 + \eta_1 - 1, &  n = \theta \\
            \eta_{\theta - n + 1} - \eta_{\theta - n}, & n < \theta
        \end{cases} \;  : \;  
        \begin{matrix}
\rho_1 + \eta_1 \ge 1; \ \rho_1 \le \dots \le  \rho_n \le n \\ \text{and }
 \eta_1 \le \dots \le  \eta_n \le n
       \text{ for all } n 
\end{matrix}
 \right\}.
    \end{equation}
\end{theorem}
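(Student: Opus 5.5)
We may take $\theta = 0$, since the general case follows by translating $n \mapsto n - \theta$. Write $\mathbf D'$ for the set on the right-hand side of \eqref{eqn:set-Dtheta}. The plan follows the proof of \cref{thm:polar-of-M}: first show $\mathbf D' \subseteq \cD_0^\circ$ by a summation-by-parts argument, then show $\cD_0^\circ \subseteq \mathbf D'$ by testing $E$ against uniform distributions on integer intervals containing the mode.

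\emph{Step 1: $\mathbf D' \subseteq \cD_0^\circ$.} Let $E \in \mathbf D'$ be built from $(\rho_n)$ and $(\eta_n)$. Then $E \ge 0$, because the sequences are nondecreasing and $\rho_1 + \eta_1 \ge 1$. Fix $P \in \cD_0$ with mass function $f = f_P$.

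On the right tail, rewrite each term using
\[
\rho_{n+1} - \rho_n = \{(\rho_{n+1} - (n+1)) - (\rho_n - n)\} + 1 .
\]
Since $\rho_{n+1} - (n+1) \le 0$ and $f(n+1) \le f(n)$ for $n \ge 0$, we can replace $f(n)$ by $f(n+1)$ in the first part of each term, which only increases the sum. Telescoping then gives
\[
\sum_{n=1}^{N} (\rho_{n+1} - \rho_n) f(n) \le \sum_{n=1}^N f(n) + (\rho_{N+1} - N - 1) f(N+1) + (1 - \rho_1) f(1).
\]
The middle term is $\le 0$. Since $\rho_1 \le 1$ and $f(1) \le f(0)$, the last term is at most $(1 - \rho_1) f(0)$.

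The left tail is handled in the same way and contributes at most $\sum_{n \ge 1} f(-n) + (1 - \eta_1) f(0)$. Adding the central term $(\rho_1 + \eta_1 - 1) f(0)$, the three coefficients of $f(0)$ sum to exactly $1$. Letting $N \to \infty$ gives $\Expw_P E \le \sum_n f(n) = 1$.

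\emph{Step 2: $\cD_0^\circ \subseteq \mathbf D'$.} Let $E \in \cD_0^\circ$. For $a, b \ge 0$, the uniform distribution on $\{-a, \dots, b\}$ lies in $\cD_0$. For $n \ge 1$ put $S_n = \sum_{k=1}^{n-1} E(k)$ and $T_n = \sum_{k=1}^{n-1} E(-k)$, so that $S_1 = T_1 = 0$. The e-value condition against these uniform distributions reads
\[
E(0) + 1 + S_{b+1} + T_{a+1} \le (a+1) + (b+1) \qquad \text{for all } a, b \ge 0 .
\]
Define $\alpha = \inf_{n \ge 1}(n - S_n)$ and $\beta = \inf_{n \ge 1}(n - T_n)$. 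Taking $n = 1$ shows $\alpha \le 1$ and $\beta \le 1$. The displayed condition is equivalent to $E(0) + 1 \le \alpha + \beta$, and since $E(0) + 1 \ge 1$ this forces both $\alpha$ and $\beta$ to be finite.

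Now split $E(0) + 1$ as $\rho_1 + \eta_1$ with $\rho_1 \le \alpha$ and $\eta_1 \le \beta$. This is possible because the slack $\alpha + \beta - E(0) - 1$ is nonnegative and can be subtracted from either one. Set $\rho_n = \rho_1 + S_n$ and $\eta_n = \eta_1 + T_n$. Then:
\begin{itemize}
\item both sequences are nondecreasing, since $E \ge 0$;
\item $\rho_n \le \alpha + S_n \le n$, and likewise $\eta_n \le n$;
\item $\rho_1 + \eta_1 = E(0) + 1 \ge 1$;
\item the differences reproduce $E$ on $n > 0$ and $n < 0$, and $\rho_1 + \eta_1 - 1 = E(0)$.
\end{itemize}
Hence $E \in \mathbf D'$.

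\emph{Main obstacle.} I expect the main difficulty to be Step 2, namely choosing the split of $E(0) + 1$ into $\rho_1 + \eta_1$. This is where the two-sided problem genuinely differs from the monotone case: the mode mass is shared between the two sides, so neither side can be normalised to $\rho_0 = 0$. The key observation is that the e-value constraint against all two-sided uniforms separates into the sum of the two one-sided infima $\alpha$ and $\beta$. A smaller point to check is that Step 1 uses $\rho_1 \le 1$ and $\eta_1 \le 1$ to move the boundary mass from $f(\pm 1)$ onto $f(0)$; this is why the statement requires $\rho_n \le n$ starting from $n = 1$.
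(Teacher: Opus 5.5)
Your proposal is correct and follows essentially the same route as the paper: Step 1 is the paper's summation-by-parts bound, including moving $(1-\rho_1)f(1)$ and $(1-\eta_1)f(-1)$ onto $f(0)$, and Step 2 tests against the same two-sided uniform distributions to obtain the separable constraint $\rho_n+\eta_m\le n+m$. The only difference is in how the split is chosen: the paper first takes the symmetric split $\rho_1=\eta_1=(1+E(0))/2$ and then rebalances it by a constant shift via \cref{lem:shifting-sequences}, whereas you pick the split directly from the one-sided slacks $\alpha,\beta$, which does that shift in one step without the auxiliary lemma.
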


It is worth noting that although not stated explicitly, $\rho_1 \ge 0$ and $\eta_1 \ge 0$ must be satisfied, implied by the description in \eqref{eqn:set-Dtheta}.
We prove \cref{thm:polar-unimodal-Z} in \cref{sec:pf-polar-Z}. The way we construct this polar set is also revealed in \cref{sec:constraints}. 

\subsection{Power, bipolar, and testability of $\cD_{\theta}$}\label{sec:power-unimodal}

Like the monotone class, the unimodal class $\cD_{\theta}$ also enjoys the following bipolar statement that establishes its strong testability.
\begin{theorem}[Strong testability of $\theta$-unimodality]\label{thm:bipolar-unimodal}
    $^\circ\fD_{\theta} = {^\circ(\cD_{\theta}^\circ)}$, the bipolar of $\cD_{\theta}$, is the set of all subprobabilities on $\mathbb Z$ that are bounded by some $\theta$-unimodal probability:
    \begin{equation}
        \cD_{\theta}^- = \{ P \in \cP^{-}(\mathbb Z) : P \le P' \text{ for some }P' \in  \cD_{\theta} \}.
    \end{equation}
    Therefore, ${^\circ_1( \cD_{\theta}^\circ)} =  \cD_{\theta}$, meaning that there is no non-$\theta$-unimodal distribution that is indistinguishable from $\theta$-unimodal distributions on $\mathbb Z$.
\end{theorem}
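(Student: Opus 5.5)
We must prove the two inclusions $\cD_\theta^- \subseteq {^\circ\fD_\theta}$ and ${^\circ\fD_\theta} \subseteq \cD_\theta^-$. The consequence ${^\circ_1(\cD_\theta^\circ)} = \cD_\theta$ then follows. Indeed, if a probability $P$ satisfies $P \le P'$ with $P'$ also a probability, then $P = P'$.

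The first inclusion is immediate. If $P \le P' \in \cD_\theta$ and $E \in \fD_\theta$, then $E \ge 0$ gives $\int E \, \d P \le \int E \, \d P' \le 1$, and $P$ is a subprobability. (Every $P \in {^\circ\fD_\theta}$ is automatically a subprobability, since the constant $1$ lies in $\fD_\theta$.)

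For the converse, take $P \in {^\circ \fD_\theta}$ and write $f = f_P$. Mirroring the ``backward running supremum'' idea used for \cref{thm:bipolar-monotone}, define the candidate dominating mass function
\begin{equation}
 f^*(n) = \sup_{k \ge n} f(k) \ (n > \theta), \quad f^*(n) = \sup_{k \le n} f(k) \ (n < \theta), \quad f^*(\theta) = \max\Bigl\{f(\theta), \sup_{k>\theta} f(k), \sup_{k<\theta} f(k)\Bigr\}.
\end{equation}
By construction $f^* \ge f$, and $f^*$ is $\theta$-unimodal: it is non-increasing to the right of $\theta$, non-decreasing to the left, and peaks at $\theta$. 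It remains to show $\sum_n f^*(n) \le 1$. If so, we obtain a $\theta$-unimodal subprobability dominating $P$, and adding the missing mass at $\theta$ upgrades it to a probability $P' \in \cD_\theta$ with $P \le P'$, since raising the peak preserves unimodality.

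The key step, and the main obstacle, is bounding $\sum f^*$ by exhibiting for each finite truncation an e-value $E \in \fD_\theta$ with $\sum_n E(n) f(n) \ge \sum_{|n-\theta|\le N} f^*(n)$. I would work on each side separately, using the free parameters $\rho,\eta$ of \cref{thm:polar-unimodal-Z}. On the right, $f^*$ is a step function. Let $k_1 < k_2 < \dots$ be the record locations, meaning the points where the backward supremum is attained, that lie within the truncation window. Choose $\rho$ piecewise constant, jumping only at these records: put $\rho_{j+1}-\rho_j = $ (length of the plateau of $f^*$ ending at $k$) when $j+\theta = k$ is a record, and $0$ otherwise. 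Then $\rho_j \le j$ holds because the cumulative plateau lengths never exceed the distance from $\theta$, and $\sum_{n>\theta} E(n) f(n) = \sum_{n>\theta} f^*(n)$ over the window. Symmetrically, choose $\eta$ on the left.

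The delicate point is the center. The entry $E(\theta) = \rho_1+\eta_1-1$ must absorb $f^*(\theta)$, subject to $\rho_1 + \eta_1 \ge 1$, $\rho_1,\eta_1 \le 1$, and the monotonicity $\rho_1 \le \rho_2$. I would handle it by cases on which term attains the maximum defining $f^*(\theta)$. If $f(\theta)$ is the largest, take $\rho_1=\eta_1=1$, so $E(\theta)=1$; the other plateaus are then shifted to start at $\theta+1$, which is still feasible. If instead the supremum on the right exceeds $f(\theta)$, shift that first plateau to include $\theta$: set $\rho_1 = 0$ and $\eta_1 = 1$, and let the first right record carry one extra unit of length. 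Then $\rho_j \le j$ still holds, since the plateau from $\theta$ to the first record $k_1$ has length $k_1-\theta+1$ and the jump occurs at index $k_1-\theta+1$. I expect verifying these index bookkeeping constraints exactly, including the $n=\theta$ case and the limit $N\to\infty$ (monotone convergence), to be where care is needed. The conclusion is $\sum f^* \le \sup_N \sum E_N f \le 1$.
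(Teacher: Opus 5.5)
Your proposal is correct and follows essentially the same route as the paper. The paper dominates $f_P$ by the same two-sided running supremum and bounds its total mass with an e-value from \cref{thm:polar-unimodal-Z}. That e-value is built via $\rho_n = |\{k \ge 0 : s_k < n\}|$, where $s_n$ is an argmax of $f_P$ over $k \ge n$ and $s_0 = t_0 = u$ is a global argmax; this is exactly your plateau-length bookkeeping, and it produces your center case analysis automatically through $\rho_1 = \id_{\{u \le \theta\}}$ and $\eta_1 = \id_{\{u \ge \theta\}}$. Your explicit padding of the dominating subprobability with mass at $\theta$ fills a detail the paper leaves implicit, and your truncation in $N$ is harmless but unnecessary since all terms are nonnegative.
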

The proof of \cref{thm:bipolar-unimodal} is a slightly more involved generalization of the proof of \cref{thm:bipolar-monotone}. We spell out the proof in \cref{sec:pf-bipolar-D}. 
Similarly, the weaker statement ${^\circ_1( \cD_{\theta}^\circ)} =  \cD_{\theta}$ can be seen straightforwardly from the following:  if $Q \in \cP(\mathbb Z) \setminus \cD_{\theta}$, a function $E \in \fD_\theta$ that witnesses $Q$ is
\begin{equation}
    E(n) = 1 - \id_{\{ n=m \}} + \id_{\{ n=m + 1 \}}
\end{equation}
if $f_Q(m) < f_Q(m+1)$, $m \ge \theta$; or
\begin{equation}
    E(n) = 1 - \id_{\{ n=m \}} + \id_{\{ n=m - 1 \}}
\end{equation}
if $f_Q(m) < f_Q(m - 1)$, $m \le \theta$.

\subsection{Acceptance and confidence sets, and power-one $\mathbb Z$-unimodality test}\label{sec:ooci}

A very intriguing consequence of \cref{thm:polar-unimodal-Z} is the following inevitability in testing unimodality: \emph{$\theta$-unimodality cannot be rejected based on the largeness of a single observation}. Formally:
\begin{corollary}
    Let $\alpha \in (0,1)$ and $T: \mathbb Z \to \{ 0, 1 \}$ be a level-$\alpha$ valid one-observation test of $\theta$-unimodality. That is,
    \begin{equation}
       P( T^{-1}(1) ) \le \alpha, \quad \forall P \in  \cD_{\theta}.
    \end{equation}
    Then, $\sup P( T^{-1}(0) ) = \infty$ and $\inf  P( T^{-1}(0) ) = -\infty$, meaning that $T$ must accept arbitrarily large positive and negative observations. 
\end{corollary}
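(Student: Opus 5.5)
The plan is to read the statement as saying that the acceptance region $T^{-1}(0)\subseteq\mathbb Z$ is unbounded above and below, i.e.\ $\sup T^{-1}(0)=\infty$ and $\inf T^{-1}(0)=-\infty$. I would prove this by contradiction using the characterization of $\fD_\theta$ in \cref{thm:polar-unimodal-Z}. As explained in \cref{sec:power}, the all-or-nothing function $E=\alpha^{-1}T$ is an e-value for $\cD_\theta$, so $E\in\fD_\theta$. By \cref{thm:polar-unimodal-Z} there are sequences $\rho,\eta$ with $\rho_1\le\rho_2\le\cdots$, $\rho_k\le k$, $\eta_1\le\eta_2\le\cdots$, $\eta_k\le k$ and $\rho_1+\eta_1\ge1$, such that
\[
E(n)=\rho_{n-\theta+1}-\rho_{n-\theta}\quad\text{for } n>\theta,
\qquad
E(n)=\eta_{\theta-n+1}-\eta_{\theta-n}\quad\text{for } n<\theta .
\]

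Suppose, for contradiction, that $\sup T^{-1}(0)<\infty$. Then there is $N>\theta$ with $T(n)=1$, hence $E(n)=1/\alpha$, for all $n\ge N$. Set $K=N-\theta$. Telescoping the increments gives, for every $k\ge K$,
\[
\rho_k=\rho_K+\sum_{n=N}^{\theta+k-1}E(n)=\rho_K+\frac{k-K}{\alpha}.
\]
Since $\rho_K\ge\rho_1\ge0$ (the nonnegativity noted after \cref{thm:polar-unimodal-Z}), we get $\rho_k\ge (k-K)/\alpha$. Because $1/\alpha>1$, this exceeds $k$ once $k>K/(1-\alpha)$, contradicting $\rho_k\le k$. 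The argument for $\inf T^{-1}(0)=-\infty$ is identical with $\eta$ in place of $\rho$, using the branch $n<\theta$.

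As a sanity check, and as an alternative self-contained route, I would also give the direct witness. The uniform distribution on $\{\theta,\dots,\theta+k\}$ belongs to $\cD_\theta$, since its mass function is constant on the support and zero outside it. If $T\equiv1$ on $[N,\infty)$, this distribution puts mass at least $(k+\theta-N+1)/(k+1)\to1>\alpha$ on $T^{-1}(1)$, which violates validity. The uniform distribution on $\{\theta-k,\dots,\theta\}$ handles the left tail in the same way.

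I expect no step to be a real obstacle. The only care needed is in the telescoping step: the base value $\rho_K$ must be bounded below by a constant (here $0$) so that the linear growth rate $1/\alpha>1$ eventually beats the bound $\rho_k\le k$.
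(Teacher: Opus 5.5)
Your proposal is correct and matches the paper's proof: you pass to the all-or-nothing e-value $\alpha^{-1}T \in \fD_\theta$, telescope the $\rho$-increments over a right tail where $E \equiv 1/\alpha$ (using $\rho_1 \ge 0$), contradict $\rho_k \le k$, and treat $\eta$ symmetrically for the left tail. Your alternative argument via the uniform distributions on $\{\theta,\dots,\theta+k\}$ is also valid, and it avoids the polar characterization entirely; it is the same ingredient from which the constraint $\rho_k \le k$ is derived in the proof of the polar theorem.
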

\begin{proof}
    Consider the all-or-nothing e-value associated with $T$: $E(n) = \alpha^{-1}T(n)$, which belongs to $\fD_\theta$, thus admitting a representation as in \eqref{eqn:set-Dtheta} with sequences $\{\rho_n\}$ and $\{\eta_n \}$.
    
     Assume on the contrary, $ \sup P( T^{-1}(0) ) = \theta + M -1$. Then, for all $n \ge \theta + M$, $E(n) = \alpha^{-1}$. Let $K > M/(\alpha^{-1} - 1)$ be a large integer. We have,
    \begin{equation}
       \rho_{M + K} = \rho_M + \sum_{n  =\theta + M}^{\theta + M + K - 1} E(n) \ge  \sum_{n  =\theta + M}^{\theta + M + K - 1} E(n) \ge K \alpha^{-1} > M + K,
    \end{equation}
    contradicting the condition $ \rho_{M + K} \le M + K$. Therefore, $\sup P( T^{-1}(0) ) = \infty$. The other direction  $\inf P( T^{-1}(0) ) = -\infty$ is analogous.
\end{proof}

While any one-observation test of $\theta$-unimodality must therefore have a (bidirectionally) \emph{unbounded acceptance set}, it is possible, we note, to construct a \emph{bounded confidence set} for the mode based on one unimodal observation. This fact resonates with the results of \cite{Edelman01111990,Andel} in the continuous case.

In fact, finite confidence sets for mode (weak and strong alike, recall \cref{def:all-modes-CI}) can be constructed via the $(X+1)q(X)$-shaped e-values discussed in \cref{sec:x+1qx}. We first note that
\cref{thm:discr} implies the following results on testing $\theta$-unimodality with e-values and constructing confidence intervals for the mode $\theta$ with one observation.
\begin{proposition}\label[proposition]{cor:test-Z}
    Let $\{q_\theta : \theta \in \mathbb Z \}$ be a family of subprobability mass functions on $\mathbb Z$ such that each $q_\theta$ is supported either on $\mathbb Z^{\ge \theta}$ or $\mathbb Z^{\le \theta}$.
    Let \begin{gather}
        E(X;\theta) = ( |X-\theta| + 1) \cdot q_\theta(X),
    \end{gather}
    Then each $ E(X;\theta)$ is an e-value for $\cD_{\theta}$. 
    Consequently,
    \begin{equation}\label{eqn:1obs-ci}
     \CI =   \{ \theta \in \mathbb Z :    E(X;\theta) < 1/\alpha  \}
    \end{equation}
    is a weak $(1-\alpha)$-confidence interval for mode, and thus a strong $(1-2\alpha)$-confidence interval for mode.
\end{proposition}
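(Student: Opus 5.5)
The plan is to reduce the claim to \cref{thm:discr} by restricting a $\theta$-unimodal law to one side of the mode. Fix $\theta$ and suppose, say, that $q_\theta$ is supported on $\mathbb Z^{\ge \theta}$; the case $\mathbb Z^{\le\theta}$ follows by the reflection $X \mapsto 2\theta - X$, which maps $\cD_\theta$ to itself. For $P \in \cD_\theta$, the masses $f_P(\theta), f_P(\theta+1), \dots$ are non-increasing, so the measure $R$ on $\mathbb Z^{\ge 0}$ with $f_R(k) = f_P(\theta + k)$ is a monotone sub-probability. Its total mass $c = P(X \ge \theta) \le 1$.

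Because $q_\theta$ vanishes below $\theta$, we can write
\begin{equation}
\Expw_{X\sim P} E(X;\theta) = \sum_{k \ge 0} (k+1)\, q_\theta(\theta+k)\, f_R(k).
\end{equation}
The function $k \mapsto (k+1) q_\theta(\theta + k)$ lies in $\fQ$, since $k \mapsto q_\theta(\theta+k)$ is a sub-PMF on $\mathbb Z^{\ge 0}$. By \cref{thm:discr} it therefore lies in $\fM$.

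The only mild obstacle is that $R$ is not a probability measure. If $c = 0$ there is nothing to prove. Otherwise $R/c \in \cM$, so the expectation above equals $c \cdot \Expw_{R/c}[\cdot] \le c \le 1$. Alternatively, one can apply the basic inequality $f_R(k) \le c/(k+1)$ directly, which gives
\begin{equation}
\Expw_{X\sim P} E(X;\theta) \le c \sum_k q_\theta(\theta+k) \le 1.
\end{equation}
Either way, $E(\cdot;\theta) \in \fD_\theta$.

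For the confidence statement, take $P \in \cD_{\mathbb Z}$ and $\theta \in M(P)$. Then $P \in \cD_\theta$, so by Markov's inequality
\begin{equation}
P(\theta \notin \CI) = P\bigl(E(X;\theta) \ge 1/\alpha\bigr) \le \alpha,
\end{equation}
which gives weak coverage. For strong coverage, use the remark after \cref{def:all-modes-CI}: $M(P)$ is an interval, so $M(P) \subseteq \overline{\CI}$ whenever both its minimum and maximum lie in $\CI$. A union bound then gives probability at least $1-2\alpha$. The strong set should thus be read as the convex hull $\overline{\CI}$, or as $\CI$ itself when $q_\theta$ is chosen so that $\CI$ is already an interval.
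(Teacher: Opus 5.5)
Your proof is correct and follows the paper's intended route: the paper states this proposition as a direct consequence of \cref{thm:discr} (restrict a $\theta$-unimodal law to one side of $\theta$ and use $(k+1)q(k)\in\fQ\subseteq\fM$), with weak coverage by Markov's inequality and strong coverage by the union bound on the extreme modes from the remark after \cref{def:all-modes-CI}. Your handling of the sub-probability normalization, and your caveat that the strong $(1-2\alpha)$ guarantee is really for the convex hull $\overline{\CI}$ (or for $\CI$ itself only when it is always an interval), correctly fill in details the paper leaves implicit.
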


Then, we have the following instance of \cref{cor:test-Z} where $\CI$ is finite for all but one input value. An always finite CI can subsequently be obtained from a simple union bound.

\begin{corollary}\label[corollary]{cor:1obs-CI}
     Define $T_\alpha = 2/\alpha + 1$. For any $\phi \in \mathbb Z$,  the set
     \begin{equation}
        \CI(X;\alpha, \phi) = \begin{cases}
              ( X - T_\alpha |X - \phi|, X + T_\alpha |X -  \phi| ) \cap \mathbb Z , & X \neq \phi; \\
              \mathbb Z , & X = \phi.
        \end{cases}
    \end{equation}
     is a weak $(1-\alpha)$-confidence interval for mode. Consequently, if $\phi \neq 0$,
      \begin{equation}
         \CI(X;\alpha/2, \phi)  \cap \CI(X;\alpha/2, -\phi) 
    \end{equation}
    is a weak $(1-\alpha)$-confidence interval for mode that is always finite.
\end{corollary}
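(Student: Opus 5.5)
The plan is to obtain $\CI(X;\alpha,\phi)$ as an instance of \cref{cor:test-Z}. For each $\theta\in\mathbb Z$ I will choose a subprobability mass function $q_\theta$, supported on one side of $\theta$, such that the e-value $E(X;\theta)=(|X-\theta|+1)q_\theta(X)$ satisfies $E(X;\theta)\ge 1/\alpha$ whenever $\theta\notin\CI(X;\alpha,\phi)$. Then $\CI(X;\alpha,\phi)\supseteq\{\theta : E(X;\theta)<1/\alpha\}$. Any superset of a weak $(1-\alpha)$-confidence set is again one, so \cref{cor:test-Z} gives the first claim.

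\emph{Step 1: the exclusion region.} Fix $\theta$ and put $d=|\theta-\phi|$. If $\theta=\phi$, then $\theta$ is excluded only when $X\neq\phi$ and $0\ge T_\alpha|X-\phi|$, which never happens, so I take $q_\phi\equiv 0$. For $d\ge1$, $\theta$ is excluded exactly when $X$ lies in
\[
S_\theta=\{x\neq\phi : |x-\theta|\ge T_\alpha|x-\phi|\}.
\]
By the triangle inequality every $x\in S_\theta$ has $1\le|x-\phi|\le d/(T_\alpha-1)<d$. Consequently $S_\theta$ lies strictly on $\phi$'s side of $\theta$, which is the support requirement in \cref{cor:test-Z}. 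Moreover $S_\theta$ consists of at most two points at each distance $k=1,\dots,K$ from $\phi$, where $K=\lfloor d/(T_\alpha-1)\rfloor$. Those at distance $k$ satisfy $|x-\theta|=d\mp k$.

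\emph{Step 2: the choice of $q_\theta$.} I set
\[
q_\theta(x)=\frac{\alpha^{-1}}{|x-\theta|+1}\,\id_{\{x\in S_\theta\}}.
\]
Then $E(x;\theta)=\alpha^{-1}$ on $S_\theta$, which is exactly the exclusion requirement. It remains to show $\sum_x q_\theta(x)\le 1$. Pairing the two points at distance $k$ from $\phi$, this reduces to
\[
\alpha^{-1}\sum_{k=1}^{K}\left(\frac{1}{d+1-k}+\frac{1}{d+1+k}\right)\le 1
\quad\text{with } (T_\alpha-1)K\le d .
\]

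\emph{Step 3: the mass bound, which I expect to be the main obstacle.} A crude estimate bounds each pair by $2(d+1)/((d+1)^2-K^2)$. This gives roughly $4/(\alpha^2T_\alpha(T_\alpha-2))=4/(4-\alpha^2)$, which is only barely above $1$. So the constant $T_\alpha=2/\alpha+1$ looks essentially tight, and the bound needs to be sharpened. I would first try two refinements. One is to exploit integrality: use $K\le d/(T_\alpha-1)$ together with the exact sum $\sum_{k\le K}$ instead of the worst case $K$ times the largest pair. The other is to put less mass on the far points ($|x-\theta|=d+k$) and more on the near ones, and to use that the exclusion inequality is non-strict. If neither closes the gap, I would avoid requiring exactly $E=\alpha^{-1}$ on $S_\theta$. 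Instead I would take $E=\alpha^{-1}\id_{S_\theta}$ as an all-or-nothing function and verify directly, through the representation of \cref{thm:polar-unimodal-Z}, that it lies in $\fD_\theta$. Concretely, on $\phi$'s side of $\theta$ I would build the partial sums $\rho$ (or $\eta$) of $E$ and check $\rho_n\le n$. This reduces to counting points of $S_\theta$ within distance $n$ of $\theta$: these are at most $2K$ points, all at distance at least $d-K$, and that count is what should produce the threshold $T_\alpha$.

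\emph{Step 4: the two-sided set.} By the union bound, $\CI(X;\alpha/2,\phi)\cap\CI(X;\alpha/2,-\phi)$ contains any fixed mode with probability at least $1-\alpha$. Since $\phi\neq0$, $X$ cannot equal both $\phi$ and $-\phi$. Hence at least one of the two sets is a bounded integer interval, and the intersection is always finite.
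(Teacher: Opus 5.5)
There is a genuine gap in Steps 2--3. It comes from your description of $S_\theta$ in Step 1, which is not symmetric about $\phi$. A point between $\phi$ and $\theta$ at distance $k$ from $\phi$ has $|x-\theta|=d-k$. It therefore lies in $S_\theta$ only if $d-k\ge T_\alpha k$, i.e.\ $k\le d/(T_\alpha+1)$. Only the points beyond $\phi$ (those with $|x-\theta|=d+k$) reach $k\le d/(T_\alpha-1)$.

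Your paired inequality therefore includes near points that are not in $S_\theta$, and these carry the largest weights $1/(d+1-k)$. The inequality is in fact false: as $d\to\infty$ its left side tends to $\alpha^{-1}\log\frac{2+\alpha}{2-\alpha}>1$. So neither integrality nor exact summation can prove it. Moving mass off the far points is also unavailable, because your $q_\theta$ is already the least mass that makes $E=\alpha^{-1}$ on $S_\theta$. Your impression that $T_\alpha$ is ``essentially tight'' is an artifact of this overcount: over the true $S_\theta$, your total mass tends to $\alpha^{-1}\log(1+\alpha)<1$.

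Once you use the asymmetry, even your crude estimate closes the argument. We have $|S_\theta|\le \frac{d}{T_\alpha+1}+\frac{d}{T_\alpha-1}=\frac{2T_\alpha d}{T_\alpha^2-1}$, and every $x\in S_\theta$ has $|x-\theta|\ge d-\frac{d}{T_\alpha+1}=\frac{T_\alpha d}{T_\alpha+1}$. Hence $\sum_x q_\theta(x)\le \alpha^{-1}\cdot\frac{2T_\alpha d}{T_\alpha^2-1}\cdot\frac{T_\alpha+1}{T_\alpha d}=\alpha^{-1}\cdot\frac{2}{T_\alpha-1}=1$. This is exactly the paper's computation. The paper takes $q_\theta$ uniform on $S_\theta$ (for $\phi=0$, then shifts), which dominates your $q_\theta$ pointwise, so your choice is valid once this bound is in hand.

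Your all-or-nothing fallback would also have worked. Use \cref{thm:polar-unimodal-Z} with the constant sequence $1$ on the side of $\theta$ away from $\phi$, and start the other sequence at $0$. Then membership of $\alpha^{-1}\id_{S_\theta}$ in $\fD_\theta$ reduces to $\#\{x\in S_\theta:|x-\theta|\le r\}\le\alpha(r+1)$ for all $r\ge 0$. Even your overcount gives a maximal ratio $2K/(d+K+1)<2/T_\alpha<\alpha$. However, you left this as an unexecuted contingency, so as written the mass bound, the one substantive step, is not established. The reduction to \cref{cor:test-Z} and Step 4 are fine.
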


We prove \cref{cor:1obs-CI} in \cref{sec:pfci}. In our proof, we select $q_\theta$ to be the PMF of some discrete uniform distribution near (but excluding) 0.
The parameter $\phi \in \mathbb Z$ in this confidence set must be chosen in advance. One cannot first observe $X$ then pick a $\theta \neq X$ to avoid $\CI(X; \alpha, \phi)$ becoming $\mathbb Z$. Doing so violates the validity of the confidence set. Therefore the union bound set $\CI(X;\alpha/2, \phi)  \cap \CI(X;\alpha/2, -\phi)$ is recommended as it guarantees finiteness for any preselected $\phi$ and subsequently observed $X$.

It is also very worth remarking here that the one-observation confidence set derived by \cite{Edelman01111990} in the \emph{continuous} case reads  $(X - (T_\alpha -2) |X - \phi|, X + (T_\alpha -2) |X + \phi|)$. See \cref{prop:edelci}. Aside from a slightly smaller radius when $X \neq \phi$, this confidence set is a singleton $\{ X \}$ if the observation $X$ happens to be $\phi$. In his continuous case, this is no problem as $X = \phi$ only happens with probability 0. However, we can no longer allow this corner case unchecked as $X = \phi$ can now happen with positive probability in our discrete case. Therefore, the addition ``$\CI = \mathbb Z$ when $X = \phi$''  highlights a very fundamental distinction between discrete and continuous mode estimation problems.

Finally, we return to the unfinished task in \cref{sec:power1Dtheta}. We may use the finite one-observation mode confidence set \cref{cor:1obs-CI} to preset the ``mode range'' $\Theta$ in \cref{cor:Theta} to construct a power-one sequential test for $\mathbb Z$-unimodality (i.e.\ unimodality with unrestricted unknown mode).

\begin{theorem}[Power-one sequential test for unimodality]
    Define the following sequential test for $\cD_{\mathbb Z} = \bigcup_{\theta \in \mathbb Z} \cD_\theta$: reject the null hypothesis upon the first $n \ge 2$ such that
    \begin{equation}
        J_{2:n}( \CI(X_1;\alpha/3, \phi)  \cap \CI(X_1;\alpha/3, -\phi)   ) \ge 3/\alpha.
    \end{equation}
    Here, $\phi \neq 0$ is an integer chosen in advance, the one-observation confidence interval $\CI(\cdot)$ is the one defined in \cref{cor:1obs-CI},
    and
    $J_{2:n}(\Theta)$ denotes the e-process $J_n(\Theta)$ in \cref{cor:Theta} run on $(X_2,\dots, X_n )$. Then, this is a sequential test with uniform type-I error control below $\alpha$ and power 1. 
\end{theorem}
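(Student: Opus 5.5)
The plan has two parts: type-I error control by a union bound over two bad events, and power one by reducing to \cref{cor:Theta} conditionally on $X_1$. Write $\Theta(X_1) = \CI(X_1;\alpha/3, \phi) \cap \CI(X_1;\alpha/3, -\phi)$. By \cref{cor:1obs-CI} this set is finite for every $X_1$, since $\phi \neq 0$. Its finiteness is exactly what makes \cref{cor:Theta} applicable for every realization of $X_1$.

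\textbf{Type-I error.} Fix $P \in \cD_{\mathbb Z}$ and pick any mode $\theta_0 \in M(P)$. By \cref{cor:1obs-CI} with level $\alpha/3$ (a union bound over the two intersected sets), $P(\theta_0 \notin \Theta(X_1)) \le 2\alpha/3$. On the event $\theta_0 \in \Theta(X_1)$ we have
\begin{equation}
J_{2:n}(\Theta(X_1)) = \min_{\theta \in \Theta(X_1)} J_{2:n}(\theta) \le J_{2:n}(\theta_0)
\end{equation}
for all $n \ge 2$. Because $X_2, X_3, \dots$ are i.i.d.\ $P$ and independent of $X_1$, and $\theta_0$ is deterministic, $(J_{2:n}(\theta_0))_{n \ge 2}$ is a nonnegative supermartingale with initial value $1$ by \cref{thm:power1-unimodal}. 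Ville's inequality then gives $P(\exists n \ge 2 : J_{2:n}(\theta_0) \ge 3/\alpha) \le \alpha/3$. Summing the two failure probabilities bounds the probability of ever rejecting by $2\alpha/3 + \alpha/3 = \alpha$, uniformly over $P \in \cD_{\mathbb Z}$.

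The delicate point is that a single mode is used, not the whole mode set. This is why the weak confidence guarantee suffices and we avoid a data-dependent minimum over $\theta$.

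\textbf{Power.} Let $Q \in \cP(\mathbb Z) \setminus \cD_{\mathbb Z}$. Condition on $X_1 = x$, which makes $\Theta(x)$ a fixed finite set. If $\Theta(x) = \emptyset$, the minimum is $+\infty$ by convention and the test rejects at $n = 2$. Otherwise $Q \notin \cD_{\Theta(x)}$, since $Q \notin \cD_\theta$ for every $\theta$. By \cref{cor:Theta}, applied to the i.i.d.\ sequence $X_2, X_3, \dots$ independent of $X_1$, $J_{2:n}(\Theta(x)) \to \infty$ almost surely. Hence the rejection time is finite almost surely conditionally on each $x$. Integrating over $x$ (Fubini over the countably many values of $X_1$) yields power one.

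The main obstacle is the measure-theoretic care that $\Theta$ is random. It is handled entirely by independence of $X_1$ from the later observations, together with the conditioning argument above.
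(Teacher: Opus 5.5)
Your proof is correct, and its two-part skeleton is the paper's. For type-I error you union-bound a confidence-set failure event against a Ville-type bound, and for power you apply \cref{cor:Theta} conditionally on $X_1$. Your power argument is essentially identical to the paper's; the empty-set case is vacuous, since $X_1$ lies in both intervals and hence in $\Theta(X_1)$.

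Where you differ is the type-I half. The paper works on the strong-coverage event $\{M(P)\subseteq \CI(X_1)\}$. It asserts this event has probability at least $1-2\alpha/3$, citing \cref{cor:1obs-CI} and the convex-hull remark after \cref{def:all-modes-CI}. It then uses that $J_{2:n}(\CI(X_1))$ is an e-process conditionally on that event. You instead fix a single mode $\theta_0$ and dominate the minimum by the fixed supermartingale $J_{2:n}(\theta_0)$, so you only need weak coverage of $\theta_0$.

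Your accounting is the tighter one. The $2\alpha/3$ you use is exactly what \cref{cor:1obs-CI} gives for one mode, via a union bound over the two intersected sets. By contrast, strong coverage obtained from the convex-hull remark comes from a union bound over the smallest and largest modes of the weak $(1-2\alpha/3)$ set $\Theta(X_1)$. That only yields $1-4\alpha/3$, so the paper's stated $1-2\alpha/3$ is not what its cited results deliver.

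Strong coverage was never needed: $\min_{\theta\in\Theta}J_n(\theta)$ is dominated by $J_n(\theta_0)$ as soon as one mode $\theta_0$ lies in $\Theta$. A further small benefit is that $J_{2:n}(\theta_0)$ does not involve $X_1$ at all. Your null analysis therefore needs no conditioning on an $X_1$-measurable event.
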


\begin{proof} Denote $\CI(X_1) =  \CI(X_1;\alpha/3, \phi)  \cap \CI(X_1;\alpha/3, -\phi)$.
    First, for any null distribution $P \in \cD_{\mathbb Z}$, by \cref{cor:1obs-CI} and the remark after \cref{def:all-modes-CI}, $P( M(P) \subseteq \CI(X_1)  ) \ge 1-2\alpha/3$. Conditioned on the event $\{ M(P) \subseteq    \CI(X_1)  \}$, $(J_{2:n}(\CI(X_1)))$ is an e-process, so $P(\text{rejection} |  M(P) \subseteq \CI(X_1) ) \le \alpha/3$. A union bound then gives $P(\text{rejection}) \le \alpha$. Second, for any alternative $Q \in \cP(\mathbb Z) \setminus  \cD_{\mathbb Z}$, $Q$ must also belong to $ \cP(\mathbb Z) \setminus  \cD_{\CI(X_1)}$. Then from \cref{cor:Theta} we know that $(J_{2:n}(\CI(X_1)))$ diverges to $\infty$ exponentially fast.
This concludes the proof.
\end{proof}

\subsection{Optimal e-value via least concave majorant}\label{sec:ripr}

The construction of the power-one sequential test supermartingale in \cref{sec:seq} is based on the building block of \cref{lem:log-power-monotone-evalue}. That is,
for an alternative probability measure $Q \in \cP(\mathbb Z^{\ge 0}) \setminus \cM$, we found a function $ E^{Q,m} \in \fM = \cM^\circ$ such that
\begin{equation}
   \Expw_{X \sim Q} \log E^{Q,m}(X) \ge \frac{(f_Q(m+1) - f_Q(m))^2}{2( f_Q(m)+ f_Q(m+1)  )  }  > 0.
\end{equation}
The expected log-value $ \Expw_{X \sim Q} \log E^{Q,m}(X)$ above, i.e.\ the \emph{e-power}, also decides the ``divergence rate'' in the downstream results \cref{lem:oracle-power1,thm:power1-monotone,thm:power1-unimodal}.

It is also worth noting, however, that $E^{Q,m}$ is not the the function $E \in \fM$ that maximizes the e-power $ \Expw_{X \sim Q} \log E(X)$; and  $ \frac{(f_Q(m+1) - f_Q(m))^2}{2( f_Q(m)+ f_Q(m+1)  )  }$ is not the maximal e-power  $\sup_{ E \in \fM } \Expw_{X \sim Q} \log E(X)$. To see this, in the proof of \cref{lem:log-power-monotone-evalue}, we employ the bound $\log(1+x) \ge x - x^2$ (which is loose for small $x$), and then pick $\lambda_{Q,m}$ that maximizes the resulting quadratic. Therefore, while the construction $E^{Q,m}$ in \cref{sec:seq} does guarantee power-one testing against $\cM$ and $\cD_{\theta}$ with exponentially growing test supermartingales under any alternative distribution, it is of interest to characterize
\begin{equation}\label{eqn:max-epower}
    \sup_{E \in \fM}  \Expw_{X \sim Q} \log E(X),
\end{equation}
which is the rate of growth of the supermartingale. \cite{numeraire} prove that, for a distribution class $\cA$ and an alternative $Q$ that satisfies $Q \ll \cA$ (a mild assumption that always holds in our case where $\cA = \cM$), the supremum $
      \sup_{E \in \cA^\circ}  \Expw_{X \sim Q} \log E(X)$ 
is actually always achieved by a $Q$-almost surely unique e-value called the \emph{numeraire} $E^*$ which satisfies the strong duality:
\begin{equation}\label{eq:strong-duality}
 \Expw_{X \sim Q} \log E^*(X) = \sup_{E \in \cA^\circ}  \Expw_{X \sim Q} \log E(X) =  \inf_{ P \in  {^\circ(\cA^\circ)}}  \kl( Q \| P ) = \kl( Q \| P^* ),
\end{equation}
where $P^*$ is a subprobability measure called the \emph{reverse information projection} (RIPr) of $Q$ onto $\cM$ (defined by $\d P^*/\d Q = 1/E^*$), even if all quantities are $+\infty$. In fact, the numeraire $E^*$ is uniquely characterized by the \emph{log-optimality} criterion
\begin{equation}\label{eqn:log-optimality-def}
    \Expw_{X \sim Q} \left\{ \log \frac{E(X)}{E^*(X)} \right\} \in [-\infty, 0] \quad \text{for all $E \in \cA^\circ$},
\end{equation}
which is an even stronger claim than the first equality in~\eqref{eq:strong-duality} (due to its ability to distinguish different infinities);
and similarly,
the RIPr $P^*$ is uniquely characterized by the \emph{relative KL minimization} criterion
\begin{equation}\label{eqn:log-comparison-rn}
    \Expw_{X \sim Q} \left\{ \log \frac{\d P^a}{\d P^*}(X) \right\} \in [-\infty, 0] \quad \text{for all $P \in {^\circ(\cA^\circ)}$},
\end{equation}
where $P^a$ denotes the absolutely continuous part of the measure $P$ with respect to $Q$, which is an even stronger claim than the final equality in~\eqref{eq:strong-duality} (same reason).  
Additionally, a (strikingly) simple criterion for \emph{finding}\footnote{more accurately, \emph{verifying} that a candidate $P^*$ is the RIPr} the RIPr $P^*$ and therefore the numeraire is reviewed in \cref{sec:tensorization} as \cref{prop:fundamental-numeraire}.

Now, moving back to the monotone testing problem, we
recall that the \emph{least concave majorant} (LCM) of a probability $Q$ is the probability $\tilde Q$ whose PMF $F_{\tilde Q}$ is the smallest concave function majorizing $F_Q$. It turns out that $\tilde Q$, after taking the $Q$-absolutely continuous part, is the RIPr of $Q$ onto the monotone class $\cM$.

\begin{theorem} \label{thm:ripr-lcm} Let $\tilde Q$ be the least concave majorant of $Q$. Then, $Q \ll \tilde Q$, and $\tilde Q^a$, the absolutely continuous part of $\tilde Q$ with respect to $Q$, i.e.\ the subprobability with mass function $f_{\tilde Q^a}(n) = f_{\tilde Q}(n) \id_{ \{ f_Q(n) > 0 \} }$, is the {reverse information projection} of $Q$ onto $\cM$. Consequently,
    the maximum \eqref{eqn:max-epower} is attained by the function
    \begin{equation}
        E^*_Q(n) = \frac{f_Q (n) }{f_{\tilde Q} (n)}
    \end{equation}
    where $0/0$ is understood as 0.
\end{theorem}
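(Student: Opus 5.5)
The plan is to verify the log-optimality criterion \eqref{eqn:log-optimality-def} directly for $E^*_Q = f_Q/f_{\tilde Q}$ against every $E \in \fM$, using the characterization of \cref{thm:polar-of-M}. By the strong duality reviewed above, this identifies $E^*_Q$ as the numeraire and $\tilde Q^a$ as the RIPr.

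\textbf{Step 1: structure of $\tilde Q$.} Write $F_Q(n)=\sum_{k<n} f_Q(k)$, with $F_Q(0)=0$. Let $\tilde F$ be its least concave majorant on $\mathbb Z^{\ge 0}$, and set $f_{\tilde Q}(n)=\tilde F(n+1)-\tilde F(n)$.

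Concavity gives that $f_{\tilde Q}$ is non-increasing. We have $\tilde F(0)=0$, and $\tilde F \to 1$ because $F_Q \le 1$ and constants are concave. Hence $\tilde Q \in \cM$.

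The majorant touches $F_Q$ on a set of knots $0=k_0<k_1<\dots$, and $\tilde F$ is linear between consecutive knots. So on each block $B_j=\{k_j,\dots,k_{j+1}-1\}$, $f_{\tilde Q}$ equals the constant $c_j = Q(B_j)/|B_j|$.

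$Q \ll \tilde Q$ follows because $f_{\tilde Q}(n)=0$ forces $\tilde F$ to be flat from $n$ onward, at value $1$. Then $F_Q(n)\le \tilde F(n)=1$ and $F_Q\to 1$ force $f_Q=0$ there; a short argument is needed that the flat part starts where $F_Q$ reaches $1$. If there are finitely many knots with a final infinite linear piece, that piece must be flat and is handled the same way.

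\textbf{Step 2: the key inequality.} For $E=\Delta\rho\in\fM$, Jensen gives
\begin{equation}
\Expw_Q \log \frac{E}{E^*_Q} \le \log \sum_n f_{\tilde Q}(n) E(n)\, \id_{\{f_Q(n)>0\}} \le \log \Expw_{\tilde Q} E \le 0,
\end{equation}
where the last step uses $\tilde Q\in\cM$ and $E\in\cM^\circ$. Here $E$ is $\Delta\rho$ with $\rho\in\fR$ as in \cref{thm:polar-of-M}. If $\Expw_Q \log E = -\infty$ the criterion holds trivially.

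So log-optimality reduces simply to $\tilde Q\in\cM$ together with $\d\tilde Q^a/\d Q = 1/E^*_Q$. Thus $E^*_Q$ attains \eqref{eqn:max-epower}, and $\tilde Q^a$ is the RIPr by uniqueness in \eqref{eqn:log-comparison-rn}.

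I will double-check that $E^*_Q$ itself lies in $\fM$. This holds because $\Expw_P E^*_Q \le 1$ needs $f_Q/f_{\tilde Q}$ to be a valid e-value for all monotone $P$. Equivalently, by \cref{thm:polar-of-M}, $\rho_n=\sum_{k<n} f_Q(k)/f_{\tilde Q}(k)\le n$ for all $n$.

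\textbf{Step 3: the main obstacle, $E^*_Q\in\fM$.} On a block $B_j$, $\sum_{k\in B_j} f_Q(k)/c_j = |B_j|$, so $\rho$ equals $n$ exactly at every knot $n=k_j$.

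Inside a block, for $n=k_j+r$ we have $\rho_n = k_j + (F_Q(n)-F_Q(k_j))/c_j$. Because $\tilde F$ is linear there and majorizes $F_Q$, with equality at $k_j$, this is at most $k_j + (\tilde F(n)-\tilde F(k_j))/c_j = k_j + r = n$. Monotonicity of $\rho$ and $\rho_0=0$ are immediate, so $\rho\in\fR$.

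The remaining care lies in the degenerate cases: the $0/0$ convention on blocks with $c_j=0$, and the possible absence of a knot set tending to infinity. In the latter case the last linear piece must have slope $0$, so $E^*_Q=0$ there and $\rho$ stays constant, keeping $\rho_n\le n$.
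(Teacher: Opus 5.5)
Your proposal is correct and follows the paper's proof. The core of both is the block computation showing $\sum_{k<n} f_Q(k)/f_{\tilde Q}(k) \le n$: it holds with equality at the contact points and follows from linearity plus majorization in between. This gives $f_Q/f_{\tilde Q} \in \fM$ by \cref{thm:polar-of-M}, and $Q \ll \tilde Q$ is read off from $f_{\tilde Q}$ being constant on each block.

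The differences are cosmetic. You verify log-optimality directly by Jensen against $\tilde Q \in \cM$, where the paper cites the numeraire characterization (\cref{prop:fundamental-numeraire}). Your convention $F_Q(0)=0$ also anchors the first block correctly. The paper's side claim $f_Q(0)/f_{\tilde Q}(0)=1$ fails in general: for example, $f_Q(0)=0.1$, $f_Q(1)=0.9$ gives $f_{\tilde Q}(0)=0.5$.
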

\begin{proof}
As the LCM, there exist $0 = N_0 < N_1 < \dots$ such that
    \begin{gather}
        F_Q(N_k) = F_{\tilde Q}(N_{k}), \quad F_Q(N_{k+1}) = F_{\tilde Q}(N_{k+1}), \\
        F_{\tilde Q} \text{ is linear and } F_{\tilde Q} \ge F_Q \text{ on } \{ N_k, N_k+1,\dots, N_{k+1} \}.
    \end{gather}
    The piecewise linearity of $F_{\tilde Q}$ implies that $f_{\tilde Q}(n)$ is always $\frac{F_Q(N_{k+1}) - F_Q(N_k)}{N_{k+1} - N_{k}}$ on $n \in \{  N_k + 1, \dots, N_{k+1} \}$.
    Therefore, if $f_{\tilde Q}(n) = 0$, so must be $f_Q(n)$, implying that $Q \ll \tilde Q$.

Note that the effect null while testing $\cM$ is the bipolar $\cM^-$ in \cref{thm:bipolar-monotone}, consisting of subprobabilities dominated by some monotone probability. To prove that $E_Q^* = f_Q/f_{\tilde Q} = f_Q / f_{\tilde Q^a}$ is the numeraire,
due to \citet[Theorem 3.6]{numeraire} (see \cref{prop:fundamental-numeraire}), it suffices to show that $\tilde Q^a$, the absolutely continuous part of $\tilde Q$, satisfies
 \begin{equation}
    \int \frac{\d {P^a}}{\d \tilde Q^a }(n) Q(\d n)  =    \sum_{f_Q (n) > 0 } f_Q(n) \frac{f_{P} (n) }{f_{\tilde Q}(n)} \le 1
    \end{equation}
    for any $P \in \cM^-$ whose absolutely continuous part w.r.t.\ $Q$ is $P^a$.
    This, notably, is equivalent to $f_Q/f_{\tilde Q} \in \fM$. In light of \eqref{eqn:set-M}, it suffices to show that $\sum_{n=0}^N f_Q (n) /f_{\tilde Q} (n) \le N+1$ for any $N$. In fact, we now show that $\sum_{n=0}^N f_Q (n) /f_{\tilde Q} (n) \le N+1$  holds \emph{even when in these fractions, $0/0$ is treated as $1$ instead of 0}.  Assuming $N_K < N \le N_{K+1}$ and using the piecewise linearity property of $\tilde Q$:
    \begin{align}
        & \sum_{n=1}^N \frac{f_Q (n)}{f_{\tilde Q} (n)} = \sum_{k=0}^{K-1} \sum_{n=N_k+1}^{N_{k+1}} \frac{f_Q (n)}{f_{\tilde Q} (n)} + \sum_{n=N_{K}+1}^N \frac{f_Q (n)}{f_{\tilde Q} (n)} \\
        =& \sum_{k=0}^{K-1}(N_{k+1} - N_k)   +  \frac{F_Q(N) - F_Q (N_{K})}{ f_Q (N_K+1) } \\ 
        \le& \sum_{k=0}^{K-1}(N_{k+1} - N_k)  +  \frac{F_{\tilde Q}(N) - F_Q (N_{K})}{ f_Q (N_K+1) }  = N.
    \end{align}
    Combining with $f_Q(0)/f_{\tilde Q}(0) = F_Q(0)/F_{\tilde Q} (0) = 1$, we obtain that $f_Q/f_{\tilde Q} \in \fM$, so $\tilde Q^a$ is the reverse information projection leading to log-optimal and thus the maximally powerful e-value $f_Q / f_{\tilde Q}$.
\end{proof}

We remark that both $\tilde Q$ and $\tilde Q^a$ are $\argmax_{P \in \cM^-} \left\{ \sum f_Q(n) \log {f_P(n)} \right\}$. However, \citet[Definition 3.4]{numeraire} require that \emph{the} RIPr be absolutely continuous with respect to $Q$. Therefore, only $\tilde Q^a$ satisfies the definition. Effectively, changing the measure values on $Q$-null numbers is immaterial to either the entropy minimization or the e-power maximization. We call $\tilde Q^a$ instead of $\tilde Q$ the RIPr for congruence with the \cite{numeraire} formulation.

We also remark that when the support of $Q$ is finite, the property of RIPr \eqref{eqn:log-comparison-rn} here implies the well-known fact in shape-constrained inference that the Grenander density estimator is the maximum likelihood estimator \citep{grenander1981abstract,jankowski2009estimation} --- to see that, set $Q$ to be the empirical measure. We shall additionally prove a continuous version of \cref{thm:ripr-lcm} in \cref{sec:cont-ripr}, and discuss its relation to numerous existing results in the continuous setting by e.g.\ \cite{patilea2001convex,jankowski2014mis,samworth2025nonparametric}.



Finally, we can extend \cref{thm:ripr-lcm} to an arbitrary number of i.i.d.\ observations. Denote by $\cM^n_\iids$ the set of all $n$-fold product measures of some monotone distribution: $\cM^n_\iids = \{ P^n : P \in \cM \}$. The log-optimal e-value and the RIPr are both the $n$-fold product of the respective one-observation quantity.

\begin{corollary}\label[corollary]{cor:optimal-eprocess} \label{thm:ripr-lcm-n} Let $\tilde Q$ be the least concave majorant of $Q$. Then, $(\tilde Q^{a})^n$ is the reverse information projection of $Q^n$ onto the set $\cM^n_\iids $; and the following e-value for $\cM^n_\iids$ 
\begin{equation}\label{eqn:numeraire-nsm}
        E^*_Q(x_1,\dots, x_n) = \prod_{k=1}^n \frac{f_Q (x_k) }{f_{\tilde Q} (x_k)}
    \end{equation}
    has the largest
  e-power under $Q^n$,
    where $0/0$ is understood as 0. Additionally,
   letting $\tilde Q^a$ be the absolutely continuous part of $\tilde Q$ with respect to $Q$, the \emph{reverse information projection} of $Q^n$ onto $\cM^n_\iids$ is $(\tilde Q^a)^n$.
\end{corollary}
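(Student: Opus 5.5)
The plan is to invoke the verification criterion from \citet[Theorem 3.6]{numeraire} (\cref{prop:fundamental-numeraire}) for the null $\cM^n_\iids$. It suffices to show that the candidate $E^*_Q = \prod_k f_Q(x_k)/f_{\tilde Q}(x_k)$ is itself an e-value for the effective null, namely the bipolar of $\cM^n_\iids$. Equivalently, we need
\begin{equation}
\sum_{x_1,\dots,x_n} f_{Q^n}(x)\,\frac{f_{P}(x)}{f_{(\tilde Q^a)^n}(x)} \le 1
\end{equation}
for every $P$ in that bipolar, and in particular for every $P^n$ with $P \in \cM$. Since $Q^n \ll (\tilde Q)^n$ follows coordinatewise from $Q \ll \tilde Q$ (\cref{thm:ripr-lcm}), the RIPr candidate $(\tilde Q^a)^n$ is the $Q^n$-absolutely continuous part of $\tilde Q^n$, and $\d Q^n/\d(\tilde Q^a)^n = E^*_Q$.

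The first step is validity. For $P\in\cM$, the product structure factorizes the expectation as $\Exp_{P^n} E^*_Q = \prod_{k=1}^n \Exp_{X\sim P} E^*_Q(X)$. Each factor is at most $1$ because $f_Q/f_{\tilde Q} \in \fM$ by \cref{thm:ripr-lcm}. Hence $E^*_Q \in (\cM^n_\iids)^\circ$, and so it is also valid on the bipolar.

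The second step is optimality. The fundamental criterion requires that the candidate RIPr $P^* = (\tilde Q^a)^n$ satisfy $\int \frac{\d P^a}{\d P^*}\,\d Q^n \le 1$ for all $P$ in the bipolar ${^\circ((\cM^n_\iids)^\circ)}$. Since $\frac{\d P^a}{\d P^*}\,\d Q^n = E^*_Q\,\d P^a$, this is exactly $\int E^*_Q \,\d P^a \le \int E^*_Q\,\d P \le 1$. That inequality holds because $E^*_Q$ lies in the polar and $P$ lies in its prepolar. We also need $P^*$ itself to lie in the bipolar. This holds because $\tilde Q \in \cM$ implies $\tilde Q^n \in \cM^n_\iids$, and $(\tilde Q^a)^n \le \tilde Q^n$ is dominated by it, so it pairs with every polar element to give at most $1$. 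The criterion then yields that $E^*_Q$ is the numeraire (log-optimal, maximal e-power) and that $(\tilde Q^a)^n$ is the RIPr.

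The main obstacle is making sure \cref{prop:fundamental-numeraire} applies verbatim on the product space $(\mathbb Z^{\ge 0})^n$ with the non-convex null $\cM^n_\iids$. The theory of \cite{numeraire} works with arbitrary null sets through their bipolar, so this should be fine. The criterion only uses that $P^*$ is a $Q^n$-absolutely continuous subprobability in the bipolar with $\d Q^n/\d P^* \in (\cM^n_\iids)^\circ$, and the steps above verify exactly these properties. The remaining care is with the $0/0=0$ conventions on $Q$-null points, which are handled coordinatewise as in \cref{thm:ripr-lcm}.
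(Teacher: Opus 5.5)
Your proposal is correct and follows essentially the same route as the paper. The paper invokes its general tensorization result (\cref{prop:ripr-bounded-nobs}), whose proof rests on the two ingredients you inline: $(\tilde Q^a)^n \le \tilde Q^n \in \cM^n_\iids$ places the candidate in the effective null, and the product likelihood ratio is a product of independent $\fM$-e-values, hence an e-value for $\cM^n_\iids$; these then feed into \cref{prop:fundamental-numeraire}. Your separate ``optimality'' check is just the same polar condition restated, so it is redundant but harmless.
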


\begin{proof}
    Note that in the one-observation case (\cref{thm:ripr-lcm}), the RIPr $\tilde Q^a$ is upper  
    bounded by $\tilde Q \in \cM$. We invoke \cref{prop:ripr-bounded-nobs} to see that the RIPr is tensorizable in this case, i.e.\ $(\tilde Q^a)^n$ is the RIPr of $Q^n$ onto $\cM^n_\iids$. Consequently, the numeraire e-value is
    \begin{equation}
       \frac{f_{Q^n}(x_1,\dots, x_n)}{f_{(\tilde Q^a)^n}(x_1,\dots, x_n)} = \prod_{k=1}^n \frac{f_Q (x_k) }{f_{\tilde Q^a} (x_k)}  = \prod_{k=1}^n \frac{f_Q (x_k) }{f_{\tilde Q} (x_k)}. \qedhere
    \end{equation}
\end{proof}

In fact, \eqref{eqn:numeraire-nsm} is a supermartingale under $\cM^\infty_\iids$ (i.e.\ under any infinite sequence of i.i.d.\ observations $X_1,X_2,\dots$ distributed according to some $P \in \cM$), by \cref{prop:ripr-bounded-nobs} it is also the log-optimal e-value, e-process, and nonnegative supermartingale under $X_1,X_2,\dots \iid Q$ at arbitrary fixed sample size or bounded stopping time.

\section{Some extensions to continuous distributions}\label{sec:cont}

Our exposition thus far has carved out a near-complete landscape for the testing problem of discrete unimodality. Extending these results when the support of the distributions goes from $\mathbb Z$ to $\mathbb R$ is sometimes straightforward. In particular, we are able to generalize numerous results in the one-observation regime (\cref{sec:1obs}).
We record some of these extensions in this section while leaving a more involved study (including sequential tests) for future work.  If $P \in \cS(\mathbb R)$, we denote by $F_{P}$ the cumulative distribution function (CDF) of $P$, i.e.\ $F_P(x) = P((-\infty, x])$.

We start by defining unimodality on $\mathbb R$. 
Let us first follow \cite{basudasg} and define unimodality by the convexity of the cumulative distribution function (CDF) without requiring a PDF, thus allowing a jump at the mode. We shall discuss the jumpless case later.
Below are some sets of distributions of interest.

\begin{definition}
    Let $\cU^+_{\theta}$ be the set of distributions on $[\theta, \infty)$ with concave CDF.  Let $\cU^-_{\theta}$ be the set of distributions on $(-\infty, \theta]$ with convex CDF. Finally, we let $\cU_{\theta}$ be the set of distributions on $\mathbb R$ whose CDF is concave on $[\theta, \infty)$ and convex on $(-\infty, \theta]$. We call $\cU^+_{0}$ the set of monotone distributions, and $\cU_\theta$ the set of $\theta$-unimodal distributions.
\end{definition}

\subsection{Power-one e-process for $\cU_0^+$ against $\cU_0^{+c}$}

Let us first generalize \cref{thm:power1-monotone} by constructing a power-one e-process for continuous monotonicity. The key idea is the \emph{rational} one-observation witness e-values of non-monotone distributions, which enable a countable mixture. First, let us show that it is always possible to witness a non-monotone distribution on $[0,\infty)$ at rational points.

\begin{lemma}
    For any non-monotone distribution $Q \in \cP(\mathbb R^{\ge 0 }) \setminus \cU_0^+$, there exist \emph{rational} numbers $0 \le a < b < c$ such that
    \begin{equation}\label{eqn:witness-rationals}
        \frac{F_Q(b) - F_Q(a)}{b-a} <  \frac{F_Q(c) - F_Q(b)}{c-b}.
    \end{equation}
\end{lemma}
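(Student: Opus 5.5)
We will show that non-concavity of $F_Q$ on $[0,\infty)$ can always be detected by a strict chord-slope inequality among three points, and then move those points to rationals. Since $Q \notin \cU_0^+$ and $Q$ is supported on $\mathbb R^{\ge 0}$, the CDF $F_Q$ restricted to $[0,\infty)$ is not concave.

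\emph{Step 1: a real witness.} A function $F$ on an interval is concave if and only if for all $a<b<c$ the slopes satisfy
\begin{equation}
\frac{F(b)-F(a)}{b-a} \ge \frac{F(c)-F(b)}{c-b},
\end{equation}
which is equivalent to $F(b)$ lying on or above the chord from $(a,F(a))$ to $(c,F(c))$. Failure of concavity therefore gives real $0\le a<b<c$ satisfying \eqref{eqn:witness-rationals}. One subtlety is that concavity of the CDF ``on $[0,\infty)$'' might be intended to allow a jump at $0$, i.e.\ an atom at the mode. However, $F_Q(0)=Q(\{0\})$ is the value at $0$, and the three-point criterion on the closed half-line $[0,\infty)$ is exactly concavity there, so no special treatment of $0$ is needed.

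\emph{Step 2: rationalize using right-continuity.} The strict inequality is an open condition in $(F(a),F(b),F(c),a,b,c)$. Since $F_Q$ is right-continuous, we approximate each of $a,b,c$ from the right by rationals $a_k\downarrow a$, $b_k\downarrow b$, $c_k\downarrow c$ (with $a_k\ge 0$, which is automatic). Then $F_Q(a_k)\to F_Q(a)$ and likewise for $b$ and $c$, and the denominators $b_k-a_k\to b-a>0$ and $c_k-b_k\to c-b>0$. Both slopes converge to the original ones, so the strict inequality persists for large $k$. Choosing $k$ large also preserves the ordering $a_k<b_k<c_k$, because the limits are strictly ordered. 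This yields the required rationals.

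The main obstacle is simply ensuring that approximation works despite jumps in $F_Q$. Approaching from the left would fail at atoms, which is why we approximate only from the right, where right-continuity of the CDF gives exact convergence.
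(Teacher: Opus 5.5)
Your proposal is correct and takes essentially the same approach as the paper: both approximate the three points from the right by rationals and use right-continuity of $F_Q$. The paper phrases it contrapositively, passing the non-strict slope inequality from all rational triples to all real triples, whereas you perturb a real strict witness directly into a rational one, which is the same argument.
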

\begin{proof}
    Suppose on the contrary that for any rationals $0 \le a < b < c$,
    \begin{equation}\label{eqn:contrapositive-rationals}
        \frac{F_Q(b) - F_Q(a)}{b-a} \ge  \frac{F_Q(c) - F_Q(b)}{c-b}. 
    \end{equation}
    Then we can see that \eqref{eqn:contrapositive-rationals} holds for any real numbers $0 \le a < b < c$ as well by right-approximating the right-continuous function $F_Q$. This contrapositively proves the lemma.
\end{proof}

We are now able to construct a witness e-value based on these rational points, which is completely similar to \cref{lem:log-power-monotone-evalue}.

\begin{lemma}\label[lemma]{lem:log-power-monotone-evalue-continuous}
     Let $Q \in \cP(\mathbb R^{\ge 0 }) \setminus \cU_0^+$ with rationals $0 \le a < b < c$ satisfying \eqref{eqn:witness-rationals}.
   Then, the function $E^{Q,a,b,c} \in \fF(\mathbb R^{\ge 0})$ defined as
    \begin{equation}
    E^{Q,a,b,c}(x) = 1 + \lambda_{Q,a,b,c} \left( -  \frac{\id_{\{a < x \le b \} }}{b-a} +  \frac{\id_{\{b < x \le c \} }}{c-b} \right)
\end{equation}
where
\begin{equation}
    \lambda_{Q,a,b,c} = \frac{
  \frac{F_Q(c) - F_Q(b)}{c-b} - \frac{F_Q(b) - F_Q(a)}{b-a}
    }{2\left(  \frac{F_Q(b) - F_Q(a)}{b-a} +  \frac{F_Q(c) - F_Q(b)}{c-b}  \right)} \wedge (b-a)
\end{equation}
satisfies 
\begin{equation}
   E^{Q,a,b,c} \in \cU_0^{+\circ} \quad \text{and} \quad \Expw_{X \sim Q} \log E^{Q,a,b,c}(X)    > 0.
\end{equation}
\end{lemma}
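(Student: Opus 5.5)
The plan has two parts: membership $E^{Q,a,b,c}\in\cU_0^{+\circ}$, which I expect to be routine, and positivity of the e-power, which is where the difficulty lies. Throughout, for a measure $P$ write the two chord slopes of its CDF as
\[
p_P=\frac{F_P(b)-F_P(a)}{b-a}, \qquad r_P=\frac{F_P(c)-F_P(b)}{c-b},
\]
and abbreviate $p=p_Q$, $r=r_Q$. By \eqref{eqn:witness-rationals} we have $r>p\ge 0$, so the uncapped ratio $(r-p)/(2(p+r))$ lies in $(0,1/2]$. Hence $\lambda:=\lambda_{Q,a,b,c}\in(0,\,b-a]$.

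\textbf{Step 1 (nonnegativity).} The only place where $E^{Q,a,b,c}$ drops below $1$ is $(a,b]$, where it equals $1-\lambda/(b-a)\ge 0$ because of the cap $\wedge(b-a)$. This is exactly what the cap is for.

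\textbf{Step 2 (validity under the null).} For $P\in\cU_0^+$,
\[
\Expw_{X\sim P}E^{Q,a,b,c}(X)=1+\lambda\,(r_P-p_P).
\]
Concavity of $F_P$ on $[0,\infty)$ gives decreasing chord slopes, so $r_P\le p_P$ whenever $0\le a<b<c$. A possible atom at $0$ is harmless, since it only affects $F_P(0)$ and does not break concavity on $[0,\infty)$. Hence the expectation is at most $1$, and together with Step 1 this gives $E^{Q,a,b,c}\in\cU_0^{+\circ}$.

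\textbf{Step 3 (positive e-power).} Put $u=\lambda/(b-a)$ and $v=\lambda/(c-b)$. Then
\[
g(\lambda):=\Expw_Q\log E^{Q,a,b,c}=p(b-a)\log(1-u)+r(c-b)\log(1+v).
\]
As a function of the amplitude, $g$ is concave, $g(0)=0$, and $g'(0)=r-p>0$. So $g>0$ on an interval $(0,\lambda_0)$, where $\lambda_0$ is its positive root, and the task reduces to showing that the prescribed $\lambda$ lies in $(0,\lambda_0)$. The approach is to bound each logarithm from below and compare the bounds:
\begin{itemize}
\item use $\log(1+x)\ge x/(1+x)$ on the gain term, and $\log(1-u)\ge -u/(1-u)$ on the loss term;
\item this reduces positivity to $r-p>\lambda\bigl(r/(b-a)+p/(c-b)\bigr)$;
\item the alternative is to copy the proof of \cref{lem:log-power-monotone-evalue}, using $\log(1+x)\ge x-x^2$ when $u,v\le 1/2$.
\end{itemize}

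\textbf{Main obstacle.} The prescribed $\lambda$ is scale-free (of order $(r-p)/(p+r)$), whereas $u$ and $v$ rescale it by $1/(b-a)$ and $1/(c-b)$. So $\lambda<\lambda_0$ is not automatic. In particular, if the cap binds ($b-a<1/2$), then $u=1$ and $E^{Q,a,b,c}=0$ on $(a,b]$. In that case the e-power is $-\infty$ unless $Q(a,b]=0$ (if $Q(a,b]=0$ the loss term vanishes and $g>0$ trivially).

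My first attempt would therefore use the freedom in choosing the rationals. By the right-approximation argument in the preceding lemma, the triple can be refined or rescaled. The aim is to pick $a,b,c$ with $b-a$ and $c-b$ large enough that $u,v\le 1/2$ and the quadratic bound of \cref{lem:log-power-monotone-evalue} applies; that bound yields $g(\lambda)\ge \lambda(r-p)-\lambda^2\bigl(p/(b-a)+r/(c-b)\bigr)>0$. If such a choice is not available, I would report that the statement, read literally for an arbitrary witnessing triple, needs the cap tightened (for instance to $\wedge\frac{b-a}{2}\wedge\frac{c-b}{2}$ together with the above rescaling) for Step 3 to go through.
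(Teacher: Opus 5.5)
Your Steps 1 and 2 are correct. Your Step 3 diagnosis is also correct, and it goes further than the paper. The paper's entire proof is the remark that the argument is ``similar to that of \cref{lem:log-power-monotone-evalue}'', and that is exactly the step that fails. In the discrete lemma the two cells have unit width, so their $Q$-masses equal the slopes. Here the masses are $p(b-a)$ and $r(c-b)$, while the jumps of $E^{Q,a,b,c}$ are $u=\lambda/(b-a)$ and $v=\lambda/(c-b)$. The analogue of \eqref{eqn:e-power-quadratic} is therefore $\lambda(r-p)-\lambda^2\bigl(\frac{p}{b-a}+\frac{r}{c-b}\bigr)$, and the scale-free ratio $\bar\lambda=\frac{r-p}{2(p+r)}$ need not make it positive: $\cU_0^+$ is invariant under $x\mapsto sx$, but $\lambda$ ought to scale like a length. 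So the lemma is false as stated, and not only when the cap binds. Take $a=0$, $b=1$, $c=101/100$, $Q(0,1]=0.1$, $Q(1,1.01]=0.002$, with the remaining mass beyond $1.01$. Then $p=0.1$, $r=0.2$, $\lambda=1/6$, and $\Expw_{X\sim Q}\log E^{Q,a,b,c}(X)=0.1\log(5/6)+0.002\log(53/3)\approx-0.0125$. (A minor slip: the cap binds iff $b-a<\bar\lambda$, which implies $b-a<1/2$ but is not implied by it.)

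The gap in your proposal is the repair. Choosing the triple proves a different statement, since the lemma quantifies over every witnessing triple. It also fails as an existence claim. Let $Q$ be non-monotone with a.e.-positive density on $[0,\epsilon]$, $\epsilon\le 1/10$ (e.g.\ density $2x/\epsilon^2$). Any witnessing triple has $Q(b,c]>0$, hence $b<\epsilon$ and $p>0$. If the cap binds, $E^{Q,a,b,c}=0$ on $(a,b]$ and the e-power is $-\infty$. Otherwise $\bar\lambda<b-a<\epsilon$ forces $r<p\frac{1+2\epsilon}{1-2\epsilon}$, and the bounds $\log(1-u)\le-u-u^2/2$ and $\log(1+v)\le v$ give
\[
\Expw_{X\sim Q}\log E^{Q,a,b,c}(X)\le\bar\lambda^2\Bigl(2(r+p)-\frac{p}{2(b-a)}\Bigr)<\bar\lambda^2p\Bigl(\frac{4}{1-2\epsilon}-\frac{1}{2\epsilon}\Bigr)\le0 .
\]
So for such $Q$ no witnessing triple works. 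The tightened cap $\wedge\frac{b-a}{2}\wedge\frac{c-b}{2}$ does not help either. Take $a=0$, $b=1$, $c=1.01$, $Q(0,1]=p$ and $Q(1,1.01]=0.0102\,p$ for any $0<p\le 1/2$. Then no cap binds, $\lambda\approx0.00495$, and the e-power is $p\bigl(\log(0.99505)+0.0102\log(1.495)\bigr)\approx-0.00086\,p<0$.

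What works is to build the lengths into the amplitude, and your own first bullet already handles this. Set
\[
\lambda=\frac{r-p}{2\bigl(\frac{r}{b-a}+\frac{p}{c-b}\bigr)} .
\]
Then $u\le\frac{r-p}{2r}\le\frac12$, so no cap is needed. The bounds $\log(1+v)\ge\frac{v}{1+v}$ and $\log(1-u)\ge-\frac{u}{1-u}$ give e-power at least $\lambda\bigl(\frac{r}{1+v}-\frac{p}{1-u}\bigr)$, which is positive because $ru+pv=\frac{r-p}{2}<r-p$. This choice reduces exactly to \eqref{eqn:choice-of-lambda} when $b-a=c-b=1$. It is also continuous in $(p,r)$ on $\{r>p\ge0\}$, so the subsequent plug-in and mixture arguments apply as before.
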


The proof of \cref{lem:log-power-monotone-evalue-continuous} is similar to that of \cref{lem:log-power-monotone-evalue}. Now, a power-one e-process with unknown $Q$ but known rational witness points $a,b,c$, analogous to \cref{lem:oracle-power1}.

\begin{lemma}
     Let $Q \in \cP(\mathbb R^{\ge 0 }) \setminus \cU_0^+$ with rationals $0 \le a < b < c$ satisfying \eqref{eqn:witness-rationals}. Then, the process
     \begin{equation}
         M_n^{(a,b,c)} = \prod_{k=1}^n E^{  \hat Q_{k-1} ,a,b,c}(X_k)
     \end{equation}
    diverges to $\infty$ exponentially fast almost surely under the alternative $X_1,X_2 \dots \iid Q$:
    \begin{equation}
       \liminf_{n \to \infty}  \frac{\log  M_n^{(a,b,c)} }{n}  > 0;
    \end{equation}
    and is a supermartingale under the null $X_1,X_2 \dots \iid P \in \cU_0^+$.
\end{lemma}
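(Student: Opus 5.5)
The argument runs parallel to the proof of \cref{lem:oracle-power1} and has two halves: the supermartingale property under the null, and exponential growth under $Q$.

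\emph{Null.} Fix $P \in \cU_0^+$. The amplitude $\lambda_{\hat Q_{k-1},a,b,c}$ is a function of $X_1,\dots,X_{k-1}$ only, so it is predictable. By \cref{lem:log-power-monotone-evalue-continuous}, applied with $\hat Q_{k-1}$ in place of $Q$, each factor $E^{\hat Q_{k-1},a,b,c}$ is nonnegative. Membership in $\cU_0^{+\circ}$ does not depend on the sign or size of $\lambda$ beyond $\lambda \ge -(b-a)$, since
\begin{equation}
\Exp_P E^{\hat Q_{k-1},a,b,c}(X) = 1 + \lambda\Big(\tfrac{F_P(c)-F_P(b)}{c-b} - \tfrac{F_P(b)-F_P(a)}{b-a}\Big) \le 1
\end{equation}
whenever $\lambda \ge 0$, by concavity of $F_P$.

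To keep every factor a valid e-value, I will define the plug-in amplitude as $\max(\lambda_{\hat Q_{k-1},a,b,c}, 0)$. I also set it to $0$ when the denominator vanishes, e.g.\ at $k=1$. Then $\Exp_P[M_n \mid \mathcal F_{n-1}] \le M_{n-1}$, so $(M_n)$ is a supermartingale with $M_0 = 1$.

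\emph{Alternative.} Write $s_1 = \frac{F_Q(b)-F_Q(a)}{b-a}$ and $s_2 = \frac{F_Q(c)-F_Q(b)}{c-b}$, so $s_1 < s_2$. Since $\lambda_{Q,a,b,c}$ is a continuous function of the two empirical interval masses, the Glivenko--Cantelli theorem (or just the SLLN at the three points) gives $\lambda_k := \lambda_{\hat Q_{k-1},a,b,c} \to \lambda^* := \lambda_{Q,a,b,c} > 0$ almost surely. Next decompose
\begin{equation}
\log M_n = \sum_{k=1}^n g(\lambda_k, X_k), \qquad g(\lambda,x) = \log\Big(1 + \lambda\big(-\tfrac{\id_{\{a<x\le b\}}}{b-a} + \tfrac{\id_{\{b<x\le c\}}}{c-b}\big)\Big).
\end{equation}
The summands take only three values, each a continuous function of $\lambda$. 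So
\begin{equation}
\frac1n \log M_n = \sum_{j=0}^{2} \frac1n \sum_{k=1}^n \log(1 + \lambda_k c_j)\, \id_{\{X_k \in I_j\}},
\end{equation}
where $I_0 = (a,b]$, $I_1 = (b,c]$, $I_2$ is the rest, and $c_0 = -1/(b-a)$, $c_1 = 1/(c-b)$, $c_2 = 0$. Replacing $\lambda_k$ by $\lambda^*$ costs $o(1)$ by Cesàro averaging, and the SLLN then gives
\begin{equation}
\frac1n \log M_n \to \Exp_Q \log E^{Q,a,b,c}(X) > 0 \quad \text{almost surely},
\end{equation}
with positivity supplied by \cref{lem:log-power-monotone-evalue-continuous}.

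\emph{Main obstacle.} The delicate step is keeping $\log(1 + \lambda_k c_0)$ bounded. When $\lambda_k$ approaches $b-a$, the factor $1 - \lambda_k/(b-a)$ tends to $0$. The cap $\wedge (b-a)$ therefore needs tightening. I will instead use the cap $\wedge\, (b-a)/2$, both in the plug-in and, if needed, in the target $\lambda^*$. The quadratic bound $\log(1+x) \ge x - x^2$ for $|x| \le 1/2$, as in \cref{lem:log-power-monotone-evalue}, then shows that the capped $\lambda^*$ still has positive e-power. This keeps every log term uniformly bounded, which makes the Cesàro replacement rigorous. It also handles $Q$-atoms at the endpoints $b$ or $c$ without further care, because the intervals are half-open and $F_Q$ is right-continuous, consistent with the SLLN for indicator means.
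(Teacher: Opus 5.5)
The step that fails is your last one. Positivity of $\Exp_Q\log E^{Q,a,b,c}$ at $\lambda^*=\frac{s_2-s_1}{2(s_1+s_2)}\wedge\frac{b-a}{2}$ does not follow from the quadratic bound, and it is false in general. Write $p_1=Q((a,b])=(b-a)s_1$ and $p_2=Q((b,c])=(c-b)s_2$. For $0\le\lambda\le (b-a)/2$ the bound actually reads
\[
p_1\log\Bigl(1-\frac{\lambda}{b-a}\Bigr)+p_2\log\Bigl(1+\frac{\lambda}{c-b}\Bigr)\ \ge\ \lambda(s_2-s_1)-\lambda^2\Bigl(\frac{s_1}{b-a}+\frac{s_2}{c-b}\Bigr),
\]
not $\lambda(s_2-s_1)-\lambda^2(s_1+s_2)$. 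The amplitude $\frac{s_2-s_1}{2(s_1+s_2)}$ is the discrete formula transplanted, and it maximizes this bound only when $b-a=c-b=1$. Since $\log(1+x)\ge x-x^2$ holds for all $x\ge -1/2$, the size of $\lambda/(c-b)$ is not the problem; the penalty $\lambda^2 s_2/(c-b)$ is.

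Concretely, let $a=0$, $b=1$, $c=101/100$. Let $Q$ put mass $1/2$ uniformly on $(0,1]$, mass $1/50$ uniformly on $(1,101/100]$, and the rest on $(2,3]$. Then $s_1=1/2$ and $s_2=2$, neither cap binds, and $\lambda^*=3/10$. This gives $\Exp_Q\log E^{Q,a,b,c}=\tfrac12\log\tfrac{7}{10}+\tfrac{1}{50}\log 31\approx-0.11$. Your own Ces\`aro argument then yields $\frac1n\log M_n^{(a,b,c)}\to-0.11$, so $M_n^{(a,b,c)}\to 0$ almost surely. The paper's one-line proof has the same hole: it takes positivity from \cref{lem:log-power-monotone-evalue-continuous}, which fails on this example. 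So with the paper's $\lambda_{Q,a,b,c}$ the statement is false for this triple, and no tighter cap rescues it; the amplitude itself must change.

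The repair is to use $\lambda^*=\frac{s_2-s_1}{2(s_1/(b-a)+s_2/(c-b))}\wedge\frac{b-a}{2}$, with its plug-in analogue clipped at $0$. The displayed bound then gives $\Exp_Q\log E\ge\lambda^*(s_2-s_1)/2>0$. The map $(\hat s_1,\hat s_2)\mapsto\lambda$ is still continuous at $(s_1,s_2)$ because $s_2>0$, so the rest of your argument goes through unchanged.

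That rest is sound, and two of your modifications are needed, not cosmetic.
\begin{itemize}
\item Without clipping at $0$, the plug-in amplitude is negative whenever $\hat s_2<\hat s_1$. The factor then has conditional mean above $1$ under strictly decreasing $P$, so the supermartingale property fails.
\item With the cap $\wedge(b-a)$ and $b-a<1/2$, an early sample with $\hat s_1=0<\hat s_2$ gives $\lambda_k=b-a$. A single draw in $(a,b]$ then sets the process to $0$ forever.
\end{itemize}
Validity needs exactly $0\le\lambda\le b-a$, not the ``$\lambda\ge-(b-a)$'' you wrote. That range, rather than \cref{lem:log-power-monotone-evalue-continuous}, is also what gives nonnegativity, since that lemma's hypothesis need not hold for $\hat Q_{k-1}$. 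Finally, your three-valued decomposition with Ces\`aro replacement and the i.i.d.\ SLLN is a correct and more elementary substitute for the martingale SLLN the paper invokes.
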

To see this, examine the proof of \cref{lem:oracle-power1} in \cref{sec:pf-orcl}: since $\lambda_{\hat Q_{k-1},a,b,c} \stackrel{a.s.}\to \lambda_{Q,a,b,c}$ still holds, the same martingale law of large number argument applies. Finally, we apply a mixture over the rational triples $(a,b,c)$ to arrive at the desired power-one test against aribitrary unknown $Q \in  \cP(\mathbb R^{\ge 0 })\setminus \cU_0^+$.

\begin{theorem}[Power-one sequential test of continuous monotonicity]
     Let $Q \in  \cP(\mathbb R^{\ge 0 })\setminus \cU_0^+$. Let $\rho$ be a probability measure on $\{ (a,b,c)\in \mathbb Q^3 : 0 \le a < b < c \}$ such that $\rho(a,b,c) = \rho(\{ (a,b,c) \}) > 0$ for all $a,b,c$ (which exists due to the countability of such rational triples $(a,b,c)$).
     Then, the process
     \begin{equation}
      M_n = \sum_{a,b,c} \rho(a,b,c) M_n^{(a,b,c)}
     \end{equation}
     diverges to $\infty$ exponentially fast almost surely under the alternative $X_1,X_2 \dots \iid Q$:
    \begin{equation}
       \liminf_{n \to \infty}  \frac{\log  M_n }{n}  > 0;
    \end{equation}
    and is a supermartingale under the null $X_1,X_2 \dots \iid P \in \cU_0^+$.
\end{theorem}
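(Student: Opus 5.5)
The proof mirrors that of \cref{thm:power1-monotone} and has two parts: the null (supermartingale) property and the alternative (exponential growth) property.

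\emph{Null property.} Fix $P \in \cU_0^+$ and a rational triple $(a,b,c)$. By \cref{lem:log-power-monotone-evalue-continuous} and the preceding lemma, each $M_n^{(a,b,c)}$ is a nonnegative supermartingale with $M_0^{(a,b,c)}=1$. The multiplier $E^{\hat Q_{k-1},a,b,c}$ is predictable, and for every realized $\hat Q_{k-1}$ it lies in $\cU_0^{+\circ}$: the clipping $\lambda \le b-a$ keeps the function nonnegative, and concavity of $F_P$ gives
\[
\frac{F_P(c)-F_P(b)}{c-b} \le \frac{F_P(b)-F_P(a)}{b-a},
\]
so the conditional mean is at most $1$. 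Since $\rho$ is a probability measure, $M_0 = 1$. The conditional supermartingale inequality passes through the countable nonnegative sum by conditional monotone convergence (Tonelli):
\[
\Exp[M_n \mid \cF_{n-1}] = \sum \rho(a,b,c)\,\Exp[M_n^{(a,b,c)} \mid \cF_{n-1}] \le M_{n-1}.
\]
For this we only need $M_{n-1}<\infty$, which holds because $\Exp M_{n-1} \le 1$.

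\emph{Alternative property.} Fix $Q \in \cP(\mathbb R^{\ge 0})\setminus \cU_0^+$. By the rational-witness lemma there is a rational triple $(a^*,b^*,c^*)$ satisfying \eqref{eqn:witness-rationals}. Since every summand is nonnegative,
\[
M_n \ge \rho(a^*,b^*,c^*)\, M_n^{(a^*,b^*,c^*)},
\]
and so
\[
\frac{\log M_n}{n} \ge \frac{\log \rho(a^*,b^*,c^*)}{n} + \frac{\log M_n^{(a^*,b^*,c^*)}}{n}.
\]
The first term vanishes as $n\to\infty$ because $\rho(a^*,b^*,c^*)>0$. The previous lemma gives a strictly positive almost-sure $\liminf$ for the second term, hence $\liminf_n \log M_n / n > 0$ almost surely. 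Because this holds for every witnessing triple, one could take a supremum over such triples, as in \eqref{eqn:div-rate}. Only a single triple is needed, however, so this step requires no uniformity.

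\emph{Main obstacle.} The only delicate point is making sure the cited single-triple lemma really applies. Its growth argument relies on $\lambda_{\hat Q_{k-1},a,b,c} \to \lambda_{Q,a,b,c}$ almost surely, together with a martingale strong law for the sums
\[
\sum_k \Bigl(\log E^{\hat Q_{k-1}}(X_k) - \Exp_Q\bigl[\log E^{\hat Q_{k-1}}(X) \bigm| \cF_{k-1}\bigr]\Bigr),
\]
and both need bounded log-increments. I would check that the clip at $b-a$, together with $\lambda>0$ eventually (by the strong law applied to the interval probabilities), keeps $\log E$ bounded away from $-\infty$ on $Q$-positive events, namely $\lambda/(b-a)<1$ in the limit. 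If equality $\lambda = b-a$ causes a zero, I would instead clip at $(b-a)/2$, which does not affect positivity of the e-power. Given that lemma, the mixture step itself is routine.
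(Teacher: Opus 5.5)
Your structure matches the paper's. The paper gives no argument beyond mixing over rational triples as in \cref{thm:power1-monotone}: $M_n\ge\rho(a^*,b^*,c^*)M_n^{(a^*,b^*,c^*)}$ for one witnessing triple, and a countable nonnegative mixture of test supermartingales is again one. However, two steps you lean on fail for the objects as defined. The paper's definitions share these oversights, so citing its lemmas does not close them.

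\textbf{Null step.} Your claim that $E^{\hat Q_{k-1},a,b,c}\in\cU_0^{+\circ}$ for every realized $\hat Q_{k-1}$ needs $\lambda_{\hat Q_{k-1},a,b,c}\ge 0$, but $\lambda$ is clipped only from above. Let $s_1^P,s_2^P$ be the slopes of $F_P$ over $(a,b]$ and $(b,c]$. The conditional mean is $1+\lambda(s_2^P-s_1^P)$, which exceeds $1$ when $\lambda<0$ and $s_1^P>s_2^P$. Moreover, the factor $1+\lambda/(c-b)$ is negative on $(b,c]$ when $c-b<\frac12$ and $\lambda=-\frac12$ (empirical mass in $(a,b]$, none in $(b,c]$). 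Under the null this is the generic situation. Take the standard exponential $P$ and the triple $(0,1,2)$: then $\lambda_{\hat Q_{k-1}}\to-0.231$, and each factor has conditional mean tending to about $1.09$ and log-drift about $0.07$. So $M_n^{(0,1,2)}$ grows exponentially under a monotone null. The fix is to use $\lambda^+$, with $\lambda=0$ for $k=1$ or when $\hat Q_{k-1}((a,c])=0$. This is harmless under the alternative, since $\lambda_{\hat Q_{k-1}}\to\lambda_Q>0$ for a witnessing triple.

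\textbf{Growth step.} You rightly flag the single-triple lemma, but bounded increments are not the only issue. The paper's $\lambda_{Q,a,b,c}$ is scale-free yet gets divided by $b-a$ and $c-b$, so its e-power need not be positive. For $(a,b,c)=(0,1,1.01)$ and any $Q$ with $Q((0,1])=0.1$, $Q((1,1.01])=0.002$, one gets $\lambda=\frac16$ and $\Exp_Q\log E=0.1\log\frac56+0.002\log\frac{53}{3}<0$.

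The zero you worry about is real. If $b-a\le\frac12$ and $Q((a,b])>0$, the event $X_1\in(b,c]$, $X_2\in(a,b]$ gives $\lambda_{\hat Q_1}=b-a$ and kills $M^{(a,b,c)}$ with positive probability. Your $(b-a)/2$ clip removes the zero, but it does not give positive e-power. Write $s_1,s_2$ for the $Q$-slopes over $(a,b]$ and $(b,c]$. With $b-a=c-b=0.1$ and $s_2=1.5s_1$, the clip binds and the e-power is $0.1\,s_1(\log\frac12+1.5\log\frac32)<0$.

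What works is choosing $\lambda$ small relative to both widths. The map $g(\lambda)=Q((a,b])\log(1-\frac{\lambda}{b-a})+Q((b,c])\log(1+\frac{\lambda}{c-b})$ is concave with $g(0)=0$ and $g'(0)=s_2-s_1>0$. Take $\lambda=\min\{\frac{s_2-s_1}{2(s_1/(b-a)+s_2/(c-b))},\frac{b-a}{2},\frac{c-b}{2}\}$. The bound $\log(1+x)\ge x-x^2$ on $|x|\le\frac12$ then gives $g(\lambda)\ge\lambda(s_2-s_1)-\lambda^2\bigl(\frac{s_1}{b-a}+\frac{s_2}{c-b}\bigr)>0$. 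This choice, with the positive part taken in the plug-in, is continuous in the empirical slopes and keeps $\log E\in[\log\frac12,\log\frac32]$. So the plug-in and martingale strong-law argument goes through, and then your mixture step does too.
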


It is easy to see that a power-one unimodality test for $\cU_0$ against $\cU_0^c$ can then be constructed via $\sum_{a,b,c} \rho(a,b,c) \frac{M_n^{(a,b,c)} +  M_n^{(-a,-b,-c)}}{2}$, where $ M_n^{(-a,-b,-c)}$ is the power-one supermartingale testing $\cU_0^{-}$ with witnesses $-c < -b < -a \le 0$. A power-one unimodality test for unknown mode, however, is left to future work.

\subsection{Basic inequality and the $Xq(X)$ e-values}

We discussed in \cref{sec:x+1qx} that functions of the form $(X+1)q(X)$ are e-values for discrete monotonicity (and consequently unimodality). It is easy to see that if the support is $\gamma \mathbb Z$ instead of $\mathbb Z$, one needs to use $(X+\gamma)q(X)$ instead to match the underlying granularity. Letting the granularity $\gamma \to 0$, one would speculate that $Xq(X)$ leads to e-values for $\mathbb R$-valued monotone and unimodal distributions. We formally show this in this subsection.


Continuous monotone distributions satisfy the following inequality,  which we phrase as a set containment statement.

\begin{lemma}[Basic inequality for continuous monotone distributions]\label[lemma]{lem:basic} Define the set
\begin{equation}
    \cV = \left\{ P \in \cP([0,\infty)) :  P((a,b]) \le \frac{b-a}{b} \text{ for all } 0 \le a < b \right\}
\end{equation}
Then, $\cU^+_0 \subseteq \cV$.
\end{lemma}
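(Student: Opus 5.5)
The plan is to use the concavity of $F_P$ on $[0,\infty)$ directly. The discrete analog was the observation that $f_P(n) \le \frac{f_P(0)+\dots+f_P(n)}{n+1} \le \frac{1}{n+1}$. In the continuous setting, the corresponding fact is that the average slope of a concave function over $(a,b]$ is at most its average slope over $(0,b]$.

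Fix $P \in \cU_0^+$ and $0 \le a < b$. If $a = 0$, the claim reads $P((0,b]) \le 1$, which is trivial. So assume $0 < a < b$. Since $F_P$ is concave on $[0,\infty)$, chord slopes are non-increasing, and this gives the three-chord inequality
\begin{equation}
    \frac{F_P(b) - F_P(a)}{b-a} \le \frac{F_P(b) - F_P(0)}{b-0}.
\end{equation}
Rearranging, we get
\begin{equation}
    P((a,b]) = F_P(b) - F_P(a) \le \frac{b-a}{b}\,\bigl(F_P(b) - F_P(0)\bigr) \le \frac{b-a}{b}.
\end{equation}
The last step uses $0 \le F_P(b) - F_P(0) \le 1$.

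The one point needing care is what concavity means at $0$, where a jump (an atom at the mode) is allowed. Here $F_P(0) = P(\{0\})$ may be positive, and $F_P$ is $0$ on $(-\infty,0)$. The chord inequality above only uses values of $F_P$ at the points $0 \le a < b$, all inside $[0,\infty)$ where concavity is assumed. So an atom at $0$ causes no trouble. In fact it only helps, since it makes $F_P(b) - F_P(0)$ smaller.

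I do not expect a real obstacle. The only step to double-check is this boundary convention, namely that the concavity in the definition of $\cU^+_0$ is concavity of $F_P$ restricted to $[0,\infty)$, including the point $0$.
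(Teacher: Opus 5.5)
Your proof is correct and is essentially the paper's argument. The paper writes $a$ as the convex combination $(1-a/b)\cdot 0 + (a/b)\cdot b$ to get $F_P(a) \ge (1-a/b)F_P(0) + (a/b)F_P(b)$, which is exactly your chord-slope inequality rearranged, and then bounds $F_P(b)-F_P(0)\le 1$ just as you do.
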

\begin{proof} Let $P \in \cU^+_0$.
Since the CDF of $P$ is concave on $[0,\infty)$, we have, as $a/b \in[0,1]$,
\begin{equation}
    (1- a/b) F_P(0) + (a/b) F_P(b) \le F_P(a).
\end{equation}
Therefore, we see
that
\begin{equation}
    F_P(b) - F_P(a) \le (1 - a/b)(F_P(b) - F_P(0)) \le 1 - a/b. \qedhere
\end{equation}
\end{proof}
There is a clear analogy between the continuous distribution class $\cV$ and the discrete distribution class $\cM'$ introduced earlier at \eqref{eqn:discrete-bi} in \cref{sec:x+1qx}. To justify the name ``basic inequality'' for both $\cV$ and $\cM'$, note that \cref{lem:basic} is related to numerous
variations of it that appear in the existing unimodal literature. The celebrated result by \cite{Edelman01111990}, for example, can be derived from \cref{lem:basic}, which we discuss in \cref{sec:rat}. In their recent work, \cite{paul2025finite} call the inequality $
    P(I_1)/|I_1| > P(I_2)/|I_2| $
``the fundamental property of unimodal distribution functions'', where $I_1 = (a,b]$ and $I_2=(b,c]$ are on the right side of the mode. It is clear how the proof of \cref{lem:basic} above is also related to this inequality.

Again, $\cU_0^+$ is a proper subset of $\cV$ with the following example. For $\lambda \ge 0$ define $Q_{\lambda}$ to be the distribution that is a 60-40 mixture of uniform-$[0,1]$ and uniform-$[1+\lambda, 2+\lambda]$. $Q_{\lambda} \in \cU^+_0$ only when $\lambda = 0$. As can be seen from \cref{tab:cont-example}, if $\lambda \in (0, 0.5]$, $Q_\lambda$ belongs to $\cV$ but not $\cU_0^{+}$.

\begin{table}[!t]
    \centering
    \begin{tabular}{|c|c|c|c|c|} \hline
 $ \sup_{a<b} \frac{Q_{\lambda}((a,b]) b}{b-a} $  & $a\in [0,1]$ & $a\in [1,1+\lambda]$ &  $a\in[1+\lambda, 2+\lambda]$ & $a\in[2+\lambda, \infty)$  \\ \hline
  $b \in [0,1] $  & $0.6$ & / & / & / \\ \hline
  $b \in [1,1+\lambda] $  & $0.6$ & 0 & / & / \\ \hline
  $b \in [1+\lambda, 2+\lambda] $  & 1 & $0.4(2+\lambda)$ & $0.4(2+\lambda)$ & / \\ \hline
  $b \in [2+\lambda, \infty) $  & 1 & $0.4(2+\lambda)$ & $0.4(2+\lambda)$ & 0 \\  \hline
\end{tabular}
    \caption{Suprema of $\frac{Q_{\lambda}((a,b]) b}{b-a}$ on different ranges of $a < b$.}
    \label{tab:cont-example}
\end{table}

The following e-value follows directly from \Cref{lem:basic}.
\begin{lemma}[Bump e-value] \label[lemma]{lem:bump}
    For any $b > a > 0$, the random variable 
    \begin{equation}
        L_{a,b}(X) =  \frac{b}{b-a} \id_{\{ a < X \le  b \}}
    \end{equation}
    is an e-value for $\cV$, consequently for $\cU^+_0$.
\end{lemma}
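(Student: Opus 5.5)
The plan is to check the defining property of an e-value directly against the class $\cV$ of \cref{lem:basic}, and then pass to $\cU^+_0$ by the containment $\cU^+_0 \subseteq \cV$ that the lemma already supplies. Throughout, fix $b > a > 0$.

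First, nonnegativity and measurability are immediate. $L_{a,b}$ is the indicator of the Borel set $(a,b]$ multiplied by the positive constant $b/(b-a)$, so $L_{a,b} \in \fF(\mathbb R^{\ge 0})$.

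Second, take any $P \in \cV$ and compute
\begin{equation}
    \Expw_{X\sim P} L_{a,b}(X) = \frac{b}{b-a}\, P((a,b]) \le \frac{b}{b-a}\cdot\frac{b-a}{b} = 1,
\end{equation}
where the inequality is exactly the defining constraint of $\cV$, applicable since $0 \le a < b$. Hence $L_{a,b} \in \cV^\circ$.

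Third, the polar reverses inclusions: $\cA \subseteq \cB$ implies $\cB^\circ \subseteq \cA^\circ$, as noted after the definition of polars. Since $\cU^+_0 \subseteq \cV$ by \cref{lem:basic}, we get $L_{a,b} \in \cV^\circ \subseteq \cU_0^{+\circ}$. So $L_{a,b}$ is an e-value for every monotone distribution on $[0,\infty)$.

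There is no genuine obstacle here, since the statement is a one-line consequence of \cref{lem:basic}. The only point needing care is that the constraint in $\cV$ concerns the half-open interval $(a,b]$, which matches the indicator $\id_{\{a<X\le b\}}$ exactly. Had the indicator been taken on a closed interval, we would have to handle a possible atom at $a$. For $P \in \cU^+_0$ such an atom can only sit at $0$, because a concave CDF is continuous on $(0,\infty)$, and $0$ is excluded since $a>0$.
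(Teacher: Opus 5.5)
Your proof is correct and is essentially the paper's: the paper simply says the bump e-value follows directly from \cref{lem:basic}. That is exactly your computation $\Exp_{X\sim P} L_{a,b}(X) = \frac{b}{b-a}P((a,b]) \le 1$ for $P \in \cV$, combined with $\cU^+_0 \subseteq \cV$ and the inclusion-reversing property of the polar.
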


We now employ the method of mixture, which previously appeared in constructing omni-powerful sequential tests in \cref{sec:seq}, on a family of well-chosen bump e-values. The result of this mixture is a continuous version of the $(X+1)q(X)$ e-values discussed in \cref{sec:x+1qx}.

\begin{theorem}\label{thm:cont}
    Let $q$ be a left-continuous PDF on $[0,\infty)$ such that $(x+1)q(x)$ is bounded. Then, $X q(X)$ is an e-value for $\cV$,  consequently for $\cU^+_0$.
\end{theorem}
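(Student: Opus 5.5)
Since $\cV$ is defined through interval probabilities, the plan is to write $xq(x)$ as a mixture of the bump e-values of \cref{lem:bump}. Convexity of the e-value set (a mixture of e-values is an e-value by Tonelli) then finishes the proof. A continuous mixture over the lower endpoint $a$ with the upper endpoint fixed at $b$ is awkward. A cleaner alternative is to verify the inequality $\int xq(x)\,P(\d x)\le 1$ directly for $P\in\cV$ by a layer-cake/Stieltjes argument, mirroring the summation by parts in \cref{thm:polar-of-M} and \cref{thm:discr}. I will pursue the direct route, guided by the mixture heuristic.

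\textbf{Step 1 (reduce to a tail-type representation).} For each $b>0$, the bump e-value $L_{0^+,b}$ is degenerate, so instead I use the pointwise identity
\begin{equation}
  xq(x) = \int_0^x q(x)\,\d a ,
\end{equation}
with $q(x)$ the density at the \emph{upper} point. Integrating against $P$ and applying Tonelli gives
\begin{equation}
  \int xq(x)\,P(\d x)=\int_0^\infty\!\!\int_{(a,\infty)} q(x)\,P(\d x)\,\d a .
\end{equation}
The aim is to bound the inner integral using $P((a,b])\le (b-a)/b$.

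\textbf{Step 2 (discretize).} Approximate $q$ from below by step functions $q_k=\sum_j c_j \id_{(t_{j-1},t_j]}$ on a fine grid. Left-continuity of $q$ is exactly what makes such left-closed/right-closed step approximations converge pointwise; boundedness of $(x+1)q(x)$ gives a dominating function, so monotone or dominated convergence passes to the limit. For a step function, $\int xq_k(x)\,P(\d x)\le \sum_j c_j t_j P((t_{j-1},t_j])$. Next I interpolate with $P((0,t_j])$-type quantities and sum by parts in $j$, exactly as in the proof of \cref{thm:polar-of-M}. The Riemann sums $\sum_j c_j (t_j-t_{j-1})$ approximate $\int q\le 1$. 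Equivalently, I can write $xq(x)$ on each grid cell as a nonnegative combination of bumps $L_{t_{j-1},t_j}$ with weights $c_j(t_j-t_{j-1})\cdot x/t_j \le c_j(t_j-t_{j-1})$. The total weight is then at most $\sum_j c_j(t_j-t_{j-1})\to\int q\le 1$, up to an error from replacing $x$ by $t_j$ on the cell. Each bump has $P$-expectation at most $1$ by \cref{lem:bump}, which gives $\int xq_k\,\d P\le \sum_j c_j(t_j-t_{j-1})$.

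\textbf{Step 3 (limit), the main obstacle.} The delicate point is near $x=0$. There $t_j/(t_j-t_{j-1})$ is large for the first cell, and the bump $L_{0,b}$ is excluded ($a>0$ in \cref{lem:bump}). The resolution is that the weight $x/t_j$ handles this: on $(0,t_1]$, $xq(x)\le t_1\sup q$, which tends to $0$ as the mesh shrinks because $q$ is bounded near $0$ (from the boundedness of $(x+1)q(x)$). The same boundedness controls the tail via dominated convergence. Letting the mesh go to $0$ then yields $\Exp_P[Xq(X)]\le\int q=1$ for every $P\in\cV$, and hence for every $P\in\cU_0^+$ via \cref{lem:basic}.
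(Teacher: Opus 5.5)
Your Step 2 is correct, and it is a different discretization from the paper's. With $c_j=\inf_{(t_{j-1},t_j]}q$, the cellwise bound $x\,q_k(x)\le\sum_j c_j(t_j-t_{j-1})L_{t_{j-1},t_j}(x)$ already gives $\Exp_P[Xq_k(X)]\le\int q_k\le 1$ exactly. There is no error from replacing $x$ by $t_j$, since $x\le t_j$ on the cell. Nothing special happens at the first cell either: $t_1/(t_1-t_0)=1$, and $L_{0,t_1}=\id_{\{0<X\le t_1\}}$ is trivially an e-value because the definition of $\cV$ allows $a=0$. So the obstacle you single out in Step 3 is not the real one.

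The real gap is the limit $k\to\infty$. You claim left-continuity makes these lower step functions converge to $q$ pointwise, and that is false. The cell $(t_{j-1},t_j]$ containing $x$ generally contains points to the right of $x$, so $\inf_{(t_{j-1},t_j]}q\to q(x)$ needs control of $q$ from the right. For instance, $q=3\cdot\id_{[0,1/3]}$ is left-continuous with $(x+1)q(x)$ bounded. Yet on dyadic grids the cell containing $1/3$ always extends past $1/3$, so $q_k(1/3)=0$ for every $k$ while $q(1/3)=3$. Your appeal to monotone or dominated convergence therefore does not produce $\Exp_P[Xq(X)]$ as stated.

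The argument can be repaired, but it needs two facts your proposal lacks.
\begin{itemize}
\item[(i)] A left-continuous function has at most countably many discontinuities. Fix $\epsilon>0$ and take the points $x$ with $\limsup_{y\downarrow x}|q(y)-q(x)|>\epsilon$. Each has a left neighbourhood $(x-\delta_x,x)$ on which $q$ stays within $\epsilon/3$ of $q(x)$, and one checks these neighbourhoods are pairwise disjoint. Also $q_k(x)\to q(x)$ at every continuity point as the mesh shrinks.
\item[(ii)] No $P\in\cV$ has an atom in $(0,\infty)$, since $P(\{b\})\le\lim_{a\uparrow b}(b-a)/b=0$.
\end{itemize}
Since also $x\,q(x)=0$ at $x=0$, you get $Xq_k(X)\to Xq(X)$ $P$-almost surely, and Fatou's lemma gives $\Exp_P[Xq(X)]\le 1$. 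In fact, covering Lebesgue-null subsets of $(\epsilon,\infty)$ by intervals shows every $P\in\cV$ is absolutely continuous on $(0,\infty)$ with density at most $1/x$. That yields the bound in one line for any density $q$.

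The paper avoids this issue differently. It mixes $L_{a,a+w}$ over $a$ with weight $q(a)$ at a fixed width $w$, producing the e-value $\frac1w\int_{(X-w)^+}^{X^+}a\,q(a)\,\d a+\int_{(X-w)^+}^{X^+}q(a)\,\d a$. This averages $q$ over a window to the \emph{left} of $X$, which is exactly where left-continuity gives convergence to $Xq(X)$ at every point. Boundedness of $(x+1)q(x)$ then supplies the domination.
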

The high-level idea of the proof is that we (1) fix the ``bandwidth'' $w = b-a$, (2) put a $q$-mixture on $a$,  and (3) take the $w\to 0$ limit. The full proof can be found in \cref{sec:pf-cont}. These $Xq(X)$ e-values, like their discrete siblings (\cref{cor:test-Z,cor:1obs-CI}), also lead to a finite-length confidence interval for the mode $\theta$. However, this CI is strictly longer (although only slightly so) than the CI of \cite{Edelman01111990}, so we leave it as \cref{cor:1obs-CI-R} in \cref{sec:pf-cont} after stating the proof of \cref{thm:cont}.

\subsection{Polar of $\cU_0^+$}

Recall that we constructed the polar of the set of discrete monotone distribution class $\cM$ as \eqref{eqn:set-M}, which can be equivalently expressed as
\begin{equation}\label{eqn:Mcirc-reprise}
  \cM^\circ =  \fM = \left\{ E \in \fF(\mathbb Z^{\ge 0}) : \  \sum_{k=0}^n E(k) \le n +1  \text{ for all } n \right\}.
\end{equation}
A conceptually natural generalization of the class $\fM$ in the continuous regime is the following.
\begin{equation}
    \fU = \left\{ E \in L_1([0,\infty)) : E \ge 0, \;  \int_0^x  E(t) \d t \le x \text{ for all } x \right\}.
\end{equation}
It is worth remarking here that $\fU$ describes the condition that the function $E$ is an e-value for all \emph{uniform} distributions on $[0,x]$ for $x > 0$.

Let us use a simple integration-by-parts argument, which is the continuous counterpart of the summation-by-parts argument used frequently in the discrete case, to investigate whether the functions belonging to $\fU$ are e-values for distributions in $\cU_0^+$. 

To this end, first recall the smoothness of concave CDFs (see e.g.\ \citet[Section 2]{cipra1978class}). Let $P \in \cU_0^+$, and $F_P$ be its concave CDF on $[0, \infty)$.  Then, it has a unique Lebesgue decomposition w.r.t.\ the Lebesgue measure on $[0,\infty)$:
\begin{equation}\label{eqn:decompose}
     P(\d x) = f_P(x) \d x + P(\{0\}) \delta_0( \d x) 
\end{equation}
  where $f_P(x)$ is the non-increasing derivative of $F_P$ which exists almost everywhere on $(0, \infty)$, as the convex function $F_P$ is absolutely continuous on any interval $[a, b] \subset (0, \infty)$. The only possible singular part is a jump at the boundary $x=0$, represented by the Dirac mass $\delta_0$ in \eqref{eqn:decompose}.
  Any expected value under $P$ is representable by:
  \begin{equation}
       \Expw_{X \sim P} g(X) = g(0) \cdot P(\{0\}) + \int_0^\infty g(x) f_P(x) \d x.
  \end{equation}
  Additionally, noting that
  \begin{equation}\label{eqn:xfx0}
      x f_P(x) =  \int_0^x f_P(x)  \d t \le  \int_0^x f_P(t) \d t \stackrel{ x \to 0 }{\longrightarrow} 0,
  \end{equation}
  we see that $x f_P(x) \to 0$ as $x \to 0$.

We are now ready to state and prove the following continuous counterpart of \cref{thm:polar-of-M}, for which let us define
\begin{equation}
    \cU_{00}^+ = \{ P \in \cU_{0}^+  : P(\{0\}) = 0 \},
\end{equation}
the ``jumpless continuous monotone'' class in which every distribution has a non-increasing density on $(0,\infty)$. Polars of both $\cU_0^+$ and $\cU_{00}^+$ are provided below.

\begin{theorem}\label{thm:polar-of-U}
    $\cU_{00}^{+\circ} = \fU$. That is, functions in $\fU$ are exactly all e-values testing \underline{jumpless} continuous monotonicity. Further, $\cU_{0}^{+\circ} = \fU_1 := \{ E \in \fU : E(0) \le 1 \} $. That is, functions in $\fU$ with $E(0)\le 1$ are exactly all e-values testing continuous monotonicity with a possible jump at 0.
\end{theorem}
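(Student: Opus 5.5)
The plan mirrors the proof of \cref{thm:polar-of-M}: two inclusions for each of the two statements, with summation by parts replaced by integration by parts (equivalently, a layer-cake representation of the monotone density).

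\emph{Inclusion $\fU \subseteq \cU_{00}^{+\circ}$.} Let $E \in \fU$ and set $R(x) = \int_0^x E(t)\,\d t \le x$. Take $P \in \cU_{00}^+$ with non-increasing density $f_P$ on $(0,\infty)$. Rather than integrating by parts directly, which needs boundary terms at $0$ and $\infty$, I would write $f_P(x) = \int_0^\infty \id_{\{ x < s\}}\, \mu(\d s)$ for the nonnegative measure $\mu$ defined by $\mu((s,\infty)) = f_P(s^+)$. This is legitimate because $f_P$ is non-increasing and $f_P(x) \to 0$ as $x \to \infty$ by integrability. Then $P$ is a mixture of uniforms on $[0,s]$, with weights $s\,\mu(\d s)$ summing to $\int_0^\infty f_P = 1$. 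By Tonelli,
\begin{equation}
\int_0^\infty E f_P \,\d x = \int_0^\infty R(s)\,\mu(\d s) \le \int_0^\infty s\,\mu(\d s) = 1 .
\end{equation}
This sidesteps boundary terms entirely, although the bound $x f_P(x)\to 0$ from \eqref{eqn:xfx0} would handle them in the direct approach.

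\emph{Inclusion $\cU_{00}^{+\circ} \subseteq \fU$.} If $E$ is an e-value for every $\operatorname{unif}[0,x]$, which belongs to $\cU_{00}^+$, then $\frac1x\int_0^x E \le 1$, i.e.\ $R(x) \le x$. Integrability on every bounded interval follows. The $L_1$ requirement in the definition of $\fU$ I would read as local integrability; if global $L_1$ is truly intended, I would note that the characterization should be read as local $L_1$, since for example $E \equiv 1$ is an e-value. This membership subtlety is the main point to flag.

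\emph{Jump case.} Any $P \in \cU_0^+$ decomposes by \eqref{eqn:decompose} as $P = p\,\delta_0 + (1-p)P'$ with $P' \in \cU_{00}^+$ (when $p<1$), so $\Expw_P E = pE(0) + (1-p)\Expw_{P'}E \le 1$ whenever $E \in \fU_1$. Conversely, $\delta_0 \in \cU_0^+$ forces $E(0) \le 1$, and $\cU_{00}^+ \subseteq \cU_0^+$ forces $E \in \fU$. Note that changing $E$ at the single point $0$ does not affect $R$, so the two conditions are independent.

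The only delicate step is the mixture representation, namely justifying right-continuity and choosing the version of $f_P$. Since $f_P$ is determined only a.e., I would replace it by its right-continuous version, which changes nothing in the integrals.
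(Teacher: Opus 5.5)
Your proof is correct, and for the main inclusion $\fU \subseteq \cU_{00}^{+\circ}$ it takes a different route from the paper.

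The paper integrates by parts in the Riemann--Stieltjes sense on $[a,b]\subset(0,\infty)$. It uses $r(x)-x\le 0$ against the non-increasing $f_P$ to get the right sign on the interior term. It drops the boundary term at $b$ by sign, and controls the one at $a$ by $a f_P(a)\to 0$ from \eqref{eqn:xfx0}. This is the direct continuous analogue of the summation by parts in \cref{thm:polar-of-M}.

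You instead write the right-continuous version of the monotone density as a scale mixture of $\operatorname{unif}[0,s]$ densities, $f_P(x)=\int \id_{\{x<s\}}\,\mu(\d s)$ with $\int s\,\mu(\d s)=1$. Tonelli then reduces sufficiency to exactly the uniform conditions $\int_0^s E\le s$ that both you and the paper use for necessity. Your argument needs only nonnegativity and Tonelli: no Stieltjes integrals, no limits $a\to 0$, $b\to\infty$, and no boundary estimate. It also explains why the uniforms are the only constraints that matter: they generate $\cU_{00}^+$ by mixing. The same argument re-proves \cref{thm:polar-of-M}, since a non-increasing PMF is a mixture of discrete uniforms on $\{0,\dots,n-1\}$ with weights $n(f(n-1)-f(n))$. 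The paper's route is more hands-on, but it mirrors the discrete proof and makes $r(x)-x\le 0$ visible as a potential tested against a monotone weight.

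Your handling of the jump case and of both necessity directions is essentially the paper's.

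Your flag about $L_1$ is also correct. $E\equiv 1$ lies in $\cU_{00}^{+\circ}$ but not in $L_1([0,\infty))$. The paper's reverse inclusion only establishes $\int_0^x E\le x$, which gives local integrability and nothing more. So the statement should be read with $L_1^{\mathrm{loc}}$, and with $E$ a genuine function rather than an a.e.\ class, so that the pointwise condition $E(0)\le 1$ defining $\fU_1$ makes sense.
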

\begin{proof}
    First, suppose $E \in \fU$. Let $r(x) =  \int_0^x  E(t) \d t$, and $P \in \cU_{0}^+$.  For any $0 < a< b < \infty$, consider the following integration by parts of the Riemann–Stieltjes integral:
    \begin{equation}
        \int_a^b f_P(x) \d (r(x)-x) =  (r(x)-x)f_P(x) |_a^b - \int_a^b (r(x) - x) \d f_P(x),
    \end{equation}
    where the existence of the Riemann–Stieltjes integral on the right hand side (and hence the fact that the equality holds) is ensured by the continuity of $r(x) -x $ and the monotonicity of $f_P$. Further, since $r(x) - x \le 0$ and $f_P(x)$ is non-increasing, we have $\int_a^b (r(x) - x) \d f_P(x) \ge 0$. Noting that $r(b) -b \le 0$ and $r(a) \ge 0$, we see that
    \begin{equation}
        \int_a^b f_P(x) \d (r(x)-x) \le  a f_P(a).
    \end{equation}
    We have established around \eqref{eqn:xfx0} that $\lim_{a \to 0} a f_P(a) = 0$. So
\begin{equation}
        \int_0^\infty f_P(x) \d (r(x)-x) \le 0.
    \end{equation}
    Now, we can compute the expected value:
    \begin{align}
  &  \Expw_{X \sim P} E(X) = E(0)P(\{0\}) + \int_0^\infty E(x) f_P(x) \d x 
   \\
   = & E(0)P(\{0\}) + \int_0^\infty  f_P(x) \d (r(x)-x) + \int_0^\infty f_P(x) \d x
    \\
    \le & E(0)P(\{0\}) + P((0,\infty)) \stackrel{(*)}\le 1,
\end{align}
where $(*)$ holds as long as $P(\{0\}) = 0$ (in which case $P \in \cU_{00}^+$) or $E(0) \le 1$ (in which case $E \in \fU_1$).
Therefore, 
$\fU \subseteq \cU_{00}^{+\circ}$ and $\fU_1 \subseteq \cU_0^{+\circ}$.

Second, suppose $E \in \cU_{00}^{+\circ}$. The uniform measure on $[0,x]$ under which $E$ is an e-value immediately implies $\int_0^x E(t) \d t \le x$. So $E \in \fU$, implying that $\cU_{00}^{+\circ} \subseteq \fU$. If $E \in \cU_{0}^{+\circ}$, we can additionally consider the point mass at 0 to conclude that $E(0) \le 1$. So $E \in \fU_1$, implying that $\cU_{0}^{+\circ} \subseteq \fU_1$.

We have now proved that $\fU = \cU_{00}^{+\circ}$ and $\fU_1 = \cU_0^{+\circ}$.
\end{proof}

We can also immediately see that ${^\circ_1}\fU = \cU_{00}^{+}$ and ${^\circ_1}\fU_1 = \cU_{0}^{+}$ by noting that if $Q \notin \cU_{0}^{+}$ with violation points $a < b < c$:
\begin{equation}
    \frac{F_Q(b) - F_Q(a)}{b-a} <  \frac{F_Q(c) - F_Q(b)}{c-b},
\end{equation}
then the function
\begin{equation}\label{eqn:witness-cont}
    f(x) = 1  + \frac{b-a}{c-b}\id_{\{b < x \le c \}}  -   \id_{\{a < x \le b\}} \ge 0
\end{equation}
belongs to $\fU_1$ but is not an e-value under $Q$. It remains to be verified, however, if a full bipolar statement similar to \cref{thm:bipolar-monotone} can be established in this case as well. Nonetheless, our argument herein already establishes the similar testability of continuous monotonicity: for any non-monotone \emph{probability} measure $Q$, a witness $\cU_0^+$-e-value \eqref{eqn:witness-cont} always exists.

We can similarly show that the set of all e-values for the jumpless continuous unimodal class $\cU_{00}$ is $\{ E \in L_1(\mathbb R) : E(x), E(-x) \in \fU \}$, and the set of all e-values for the continuous unimodal class $\cU_{0}$ is $\{ E \in L_1(\mathbb R) : E(x), E(-x) \in \fU_1 \}$. We omit these straightforward proofs.

\subsection{Reverse information projection, continuous}\label{sec:cont-ripr}


For continuous random variables, we have the following generalization of \cref{thm:ripr-lcm} on the log-optimal e-value for $\cU_0^+$ under an alternative $Q$ as well as the reverse information projection of $Q$ onto $\cU_0^+$. 

We denote by $\tilde Q \in \cU_0^+$ the least concave majorant of $Q \in \cP([0,\infty))$. As an additional regularity condition, we assume that $Q \ll \cU_0^+$, that is,
\begin{equation}
   (\forall P \in  \cU_0^+,  P(A) = 0) \implies  Q(A) = 0,
\end{equation}
a central condition to establish the numeraire-RIPr results by \cite{larsson2025variables}. This requires $Q$ to have the same decomposition as \eqref{eqn:decompose}:
\begin{equation}
     Q(\d x) = f_Q(x) \d x + Q(\{0\}) \delta_0( \d x) 
\end{equation}
i.e.\ $Q$ is absolutely continuous on $(0,\infty)$ with only a possible jump at 0.

\begin{theorem}
    For any $Q \in \cP([0,\infty))$ such that $Q \ll  \cU_0^+$, it holds that $Q \ll \tilde Q$ and $\frac{\d Q}{\d \tilde Q} \in  \fU_1$ where $\frac{\d Q}{\d \tilde Q}$ is any Radon-Nikodym derivative between $Q$ and $\tilde Q$ such that $\frac{\d Q}{\d \tilde Q}(x) \in [0,1]$ on $\{x : f_{\tilde Q}(x) = f_{Q}(x) = 0\}$. Consequently, letting $\tilde Q^a$ be the absolutely continuous part of $\tilde Q$ with respect to $Q$:
    \begin{itemize}
        \item  $\tilde Q^a$ is the reverse information projection of $Q$ onto the set $\cU_0^+$;
        \item the ratio
        \begin{equation}
        \frac{\d Q}{\d \tilde Q} = \frac{\d Q}{\d \tilde Q^a} \quad \text{(in the $Q$-a.s.\ sense)}
    \end{equation}
    is the numeraire (log-optimal) e-value for $\cU_0^+$ under $Q$; and
    \item for any non-random $n$, $(\tilde Q^{a})^n$ is the reverse information projection of $Q^n$ onto the set $\cU_{0\,\iids}^{+n}$, and the product \begin{equation}
       M_n =  \prod_{i=1}^n  \frac{\d Q}{\d \tilde Q}(X_i)
    \end{equation}
     forms the log-optimal e-value, nonnegative supermartingale, and e-process for $\cU_0^+$ under $X_1, \dots, X_n \iid Q$; additionally, $M_\tau$ is the log-optimal e-value at any finite stopping time $\tau$.
    \end{itemize}

\end{theorem}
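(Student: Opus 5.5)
We follow the discrete proof of \cref{thm:ripr-lcm}, replacing summation by parts with the integration-by-parts characterization of \cref{thm:polar-of-U}.

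\textbf{Structure of the majorant.} Since $F_Q$ is bounded by $1$ and nondecreasing, its least concave majorant $F_{\tilde Q}$ is a concave CDF on $[0,\infty)$ with $F_{\tilde Q}(0)=F_Q(0)=Q(\{0\})$. Hence $\tilde Q\in\cU_0^+$ and
\[
\tilde Q(\d x)=f_{\tilde Q}(x)\,\d x+Q(\{0\})\,\delta_0(\d x),
\]
with $f_{\tilde Q}$ non-increasing. The set $O=\{x>0:F_{\tilde Q}(x)>F_Q(x)\}$ is open and is a countable union of disjoint intervals $(s_j,t_j)$. On each such interval $F_{\tilde Q}$ is affine, with slope $(F_Q(t_j)-F_Q(s_j))/(t_j-s_j)$; if $t_j=\infty$, the slope is $0$. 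Off $O$ we have $F_{\tilde Q}=F_Q$. At Lebesgue density points of $O^c$, both functions are differentiable with equal derivatives almost everywhere, so $f_Q=f_{\tilde Q}$ a.e.\ on $O^c$.

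\textbf{Absolute continuity.} Suppose $f_{\tilde Q}=0$ on a set of positive Lebesgue measure. By monotonicity, $f_{\tilde Q}=0$ on some $[t,\infty)$. Then $F_{\tilde Q}$ is constant equal to $1$ there, and so is $F_Q$, because $F_Q\le F_{\tilde Q}\le 1$ and $F_Q\to1$. On an interval of $O$ with slope $0$ this forces $F_Q$ to be constant, so again $Q$ gives no mass there. Combined with $Q\ll\cU_0^+$ (no singular part away from $0$) and the matching atom at $0$, this gives $Q\ll\tilde Q$.

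\textbf{Membership $E:=\d Q/\d\tilde Q\in\fU_1$.} At $0$, $E(0)=Q(\{0\})/\tilde Q(\{0\})$, which is $1$, or lies in $[0,1]$ by the stated convention. It remains to show $\int_0^x E\le x$ for every $x$. Split $[0,x]$ into the part in $O^c$, where $E=1$ a.e.\ (or $E\le 1$ on the convention set), and the intervals $(s_j,t_j)\cap[0,x]$. On a full interval contained in $[0,x]$, $\int_{s_j}^{t_j} f_Q/f_{\tilde Q}$ equals $(F_Q(t_j)-F_Q(s_j))/\text{slope}=t_j-s_j$. On the partial interval containing $x$, it equals
\[
\frac{F_Q(x)-F_Q(s_j)}{\text{slope}}\le \frac{F_{\tilde Q}(x)-F_{\tilde Q}(s_j)}{\text{slope}}=x-s_j,
\]
exactly as in the discrete computation. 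Summing the countably many pieces by monotone convergence gives $\int_0^x E\le x$.

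\textbf{Consequences.} By \cref{thm:polar-of-U}, $E\in\cU_0^{+\circ}$, so $\int E\,\d P\le 1$ for all $P\in\cU_0^+$, and also for all subprobabilities they dominate. Hence $\tilde Q^a$ satisfies the verification criterion \cref{prop:fundamental-numeraire}: $\int (\d P^a/\d\tilde Q^a)\,\d Q\le1$ for every $P$ in the bipolar. The same criterion, in the $Q\ll\cU_0^+$ framework of \cite{larsson2025variables}, gives that $\tilde Q^a$ is the RIPr and $E$ is the numeraire. For $n$ observations, $\tilde Q^a\le\tilde Q\in\cU_0^+$, so \cref{prop:ripr-bounded-nobs} tensorizes the result, exactly as in \cref{cor:optimal-eprocess}. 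The supermartingale, e-process and stopping-time claims follow because the product of i.i.d.\ e-values is a supermartingale under every $P^\infty$ with $P\in\cU_0^+$, together with the optional-stopping statement in \cref{prop:ripr-bounded-nobs}.

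\textbf{Main obstacle.} The delicate step is measure-theoretic: establishing $f_Q=f_{\tilde Q}$ a.e.\ on the contact set $O^c$, which may be a fat Cantor-like set, and handling the convention set. I expect to get this from differentiating $F_{\tilde Q}-F_Q\ge0$ at points where it vanishes, since it attains a minimum there, together with Lebesgue's density theorem.
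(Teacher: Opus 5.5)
Your proposal is correct and follows the paper's route. Both arguments split $(0,x)$ into the contact set, where the ratio is at most $1$, and the flat intervals of the majorant, where the integral equals the interval length; both then invoke \cref{prop:fundamental-numeraire} and tensorize via \cref{prop:ripr-bounded-nobs}, using $\tilde Q^a\le\tilde Q\in\cU_0^+$. Your version is a bit more careful than the paper's in two places: you bound the partial flat interval containing $x$ using $F_Q(x)\le F_{\tilde Q}(x)$, whereas the paper's display only sums whole intervals, and you justify $f_Q=f_{\tilde Q}$ a.e.\ on the contact set from the minimum of $F_{\tilde Q}-F_Q$, which the paper simply asserts.
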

Indeed, the Radon-Nikodym derivative is only defined up to a $\tilde Q$-negligible set and it can take arbitrary value on $\{x : f_{\tilde Q}(x) = f_{Q}(x) = 0\}$. We must stipulate that it takes values in $[0,1]$ there.
\begin{proof}
    At 0, both $Q$ and $\tilde Q$ can possibly have an atom and $\tilde Q(\{0\}) \ge Q(\{0\})$ by the definition of LCM. On $(0,\infty)$, per the basic property of the LCM,
the domain $(0, x)$ can be partitioned into the contact set $S_{\text{eq}} = \{x : F_{\tilde Q}(x) = F_Q(x)\}$ where $f_Q (x)= f_{\tilde Q}(x)$ holds; and a union of open intervals $S_{\text{flat}} = \bigcup_i (a_i, b_i)$ where $	\hat F_Q(x) > F_Q(x)$, where $F_{\tilde Q}$ is linear with constant slope $f_{\tilde Q}(x) = c_i > 0$. Therefore $f_{\tilde Q}(x) = 0$ always implies $f_Q(x) = 0$. We thus see that $Q \ll \tilde Q$ and that  $\frac{\d Q}{\d \tilde Q}(0) \le 1$. Now consider the integral
    \begin{equation}
     I_x =   \int_0^x \frac{\d Q}{\d \tilde Q}(t) \d t = \int_{(0, x) \cap S_{\text{eq}}} \frac{\d Q}{\d \tilde Q}(t) \d t + \int_{(0, x) \cap S_{\text{flat}}} \frac{\d Q}{\d \tilde Q}(t) \d t.
    \end{equation}
    The first integral above is at most $|(0, x) \cap S_{\text{eq}}|$ since $\frac{\d Q}{\d \tilde Q}(t) \in [0,1]$ on $S_{\text{eq}}$. The second integral above can be further decomposed into pieces:
    \begin{align}
      &  \int_{(0, x) \cap S_{\text{flat}}} \frac{\d Q}{\d \tilde Q}(t) \d t = \sum_i \int_{a_i}^{b_i} \frac{f_Q(x)}{c_i} \d x = \\ & \sum_i  \frac{F_Q(b_i) - F_Q(a_i)}{F_{\tilde Q}(b_i) - F_{\tilde Q}(a_i)}(b_i - a_i) =  \sum_i (b_i - a_i) = | (0, x) \cap S_{\text{flat}} |.
    \end{align}
    Therefore we see that $I_x \le |(0,x)| = x$, and that consequently $\frac{\d Q}{\d \tilde Q} \in \fU_1$.

    The rest of the theorem follows from the basic result in \cite{numeraire} (which we review as \cref{prop:fundamental-numeraire}), and the tensorization property \cref{prop:ripr-bounded-nobs}.
\end{proof}

Finally, we remark that it is well known in the shape-constrained inference literature that the (left derivative of) least concave majorant is the projection of a density onto the space of monotone densities. For example, to quote from \citet[Theorem 2.2]{samworth2025nonparametric}, if a density $f_Q$ on $(0,\infty)$ satisfies $\int f_Q(x) |\log (f_Q(x))| \d x < \infty$ and $\int f_Q(x) |\log x| \d x < \infty$, then the LCM's density $f_{\tilde Q}$ satisfies
\begin{equation}\label{eqn:lcm-kl-proj}
    f_{\tilde Q} = \argmin_{f} \kl(f_Q \| f)
\end{equation}
where the argmin is taken over all monotone densities $f$.
From the properties of the RIPr stated before \cref{thm:ripr-lcm}, we see that the RIPr statement on $\tilde Q^a$ generalizes this projection statement by ensuring the \emph{relative} KL minimization
holds:
$\tilde Q^a$ is the unique measure in the effective null ${^\circ}(\cU_{0}^{+\circ}) = {^\circ}\fU_1$ such that
\begin{equation}\label{eqn:lcm-rel-kl}
    \Expw_{X \sim Q} \left\{ \log \frac{\d P^a}{\d \tilde Q^a}(X) \right\} \in [-\infty, 0] \quad \text{for all $P \in {^\circ}(\cU_{0}^{+\circ}) $}.
\end{equation}
Recall that this relative version of the KL projection is finer than \eqref{eqn:lcm-kl-proj} and may distinguish among measures with infinite KL divergence to $Q$. The relative KL minimization statement coincides with Theorem 2.1 in \cite{jankowski2014mis} (which the author quotes from \cite{patilea2001convex}), with the only minor difference being that our result allows a jump at 0. See also Remark 2.2 in \cite{jankowski2014mis} for a discussion related to \eqref{eqn:lcm-rel-kl} implying \eqref{eqn:lcm-kl-proj}.

\section{Miscellaneous technical discussions}\label{sec:misc}

\subsection{On the p-value and CI of \cite{Edelman01111990}}\label{sec:rat}

\cite{Edelman01111990} derives a one-observation confidence interval (weak, by \cref{def:all-modes-CI}) for mode of a unimodal distribution. In this section, we restate his result through the ``basic inequality'' we described in \cref{lem:basic}, offering a point of comparison for various other results in this paper. Recall that the set of all distributions satisfying the basic inequality on $[0,\infty)$
\begin{equation}\label{eqn:bi-appendix}
    P((a,b]) \le \frac{b-a}{b} \text{ for all } 0 \le a < b
\end{equation}
is denoted by $\cV$,
which contains $\cU^+_0$, the set of all distributions with concave CDF on $[0,\infty)$.

A p-value for testing the null hypothesis $\cP$ is defined as a nonnegative statistic $p(X)$ such that $\Pr_{X \sim P}(p(X) \le u) \le u$ for all $u\in[0,1]$ under any null distribution $P \in \cP$.

\begin{proposition}[\citeauthor{Edelman01111990}'s p-value]
    If $a \ge 0$, the random variable
    \begin{equation}
        R_{a}^{\mathsf{E}}(X) = \frac{2|X-a|}{|X-a| + |X|}
    \end{equation}
    is a p-value for $\cV$, therefore for $\cU_0^+$; if $a \le 0$, a p-value for $\cU_0^{-}$; consequently, for any $a \in \mathbb R$, a p-value for $\cU_0$.
\end{proposition}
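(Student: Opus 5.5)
The plan is to compute the sublevel sets of $R^{\mathsf E}_a$ explicitly and to check that each one is a single interval whose probability \cref{lem:basic} bounds by exactly $u$.

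\textbf{Case $a>0$, $P\in\cV$.} Since $X\ge 0$ here, $|X|=X$. For $u\ge 1$ there is nothing to prove, so fix $u\in[0,1)$. The event $R^{\mathsf E}_a(X)\le u$ is equivalent to
\begin{equation}
(2-u)|X-a|\le uX.
\end{equation}
I split according to the sign of $X-a$.
\begin{itemize}
\item If $X\ge a$, the condition reads $(2-2u)X\le(2-u)a$, that is, $X\le d:=a\frac{2-u}{2-2u}$.
\item If $X<a$, it reads $(2-u)(a-X)\le uX$, that is, $X\ge c:=a(1-u/2)$.
\end{itemize}
Since $c\le a\le d$, the sublevel set is exactly the closed interval $[c,d]$ with $0<c\le d$.

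The basic inequality defining $\cV$ only handles half-open intervals $(c',d]$. I therefore take $c'\uparrow c$ and use continuity of measure:
\begin{equation}
P([c,d])=\lim_{c'\uparrow c}P((c',d])\le \lim_{c'\uparrow c}\frac{d-c'}{d}=1-\frac cd .
\end{equation}
Substituting $c/d=(1-u/2)\cdot\frac{2-2u}{2-u}=1-u$ gives $P([c,d])\le u$. Hence $R^{\mathsf E}_a$ is a p-value for $\cV$, and therefore for $\cU_0^+$ since $\cU_0^+\subseteq\cV$ by \cref{lem:basic}.

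\textbf{Degenerate case $a=0$.} Here $R^{\mathsf E}_0(X)=1$ for every $X\neq0$. At $X=0$ the ratio is $0/0$, and I adopt the convention $R^{\mathsf E}_0(0)=1$. With this convention the statistic is identically $1$, which is trivially a p-value.

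\textbf{Case $a\le 0$, class $\cU_0^-$.} I use the reflection $X\mapsto -X$, which maps $\cU_0^-$ onto $\cU_0^+$. Because $R^{\mathsf E}_a(X)=R^{\mathsf E}_{-a}(-X)$, this case reduces to the previous one.

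\textbf{Class $\cU_0$, any $a\in\mathbb R$.} By symmetry, take $a\ge0$. For $X<0$ we have
\begin{equation}
R^{\mathsf E}_a(X)=\frac{2(a+|X|)}{a+2|X|}\ge 1,
\end{equation}
so for $u<1$ the event $\{R^{\mathsf E}_a\le u\}$ lies inside $\{X\ge0\}$. For $P\in\cU_0$ with $P([0,\infty))>0$, the conditional law of $X$ given $X\ge 0$ has a CDF that is an affine rescaling of $F_P$ on $[0,\infty)$. It is therefore concave, so this conditional law belongs to $\cU_0^+$. Applying the first case gives
\begin{equation}
P\bigl(R^{\mathsf E}_a\le u\bigr)\le P(X\ge0)\,u\le u .
\end{equation}

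\textbf{Main obstacle.} The only delicate point is that the sublevel set $[c,d]$ is closed on the left, while $\cV$ controls only half-open intervals $(c',d]$. The limiting argument above resolves this. Beyond that, one must confirm that the algebra gives exactly $u$, with no slack, which the computation of $c/d$ does.
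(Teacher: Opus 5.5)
Your proof is correct and takes the same route as the paper: you identify $\{R^{\mathsf E}_a\le u\}$ as the interval $\bigl[a(1-u/2),\,a\frac{2-u}{2-2u}\bigr]$ and apply the basic inequality of \cref{lem:basic}, and the resulting ratio is exactly $u$. Your additional steps are all sound details that the paper leaves implicit: the continuity-from-above passage from $(c',d]$ to the closed interval, the convention $R^{\mathsf E}_0(0)=1$ at the degenerate $a=0$ (where $0/0$ must not be read as $0$, since $\delta_0\in\cU_0^+$), the reflection, and the conditioning argument for $\cU_0$.
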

\begin{proof}
    We only need to prove the case for $a \ge 0$. For any $P \in \cV$ and $u \in (0,1)$,
    \begin{equation}
        P( R_a^{\mathsf{E}}(X) \le u ) = P\left(  \left[ \frac{2-u}{2}a, \frac{2-u}{2-2u} a \right] \right) \le \frac{ \frac{2-u}{2-2u} - \frac{2-u}{2}}{ \frac{2-u}{2-2u}}  = u,
    \end{equation}
    where the $\le$ is obtained directly by applying the basic inequality \eqref{eqn:bi-appendix}.
\end{proof}

Applying the usual ``reject when $p < 0.05$'' rule leads to the confidence set.

\begin{proposition}[\citeauthor{Edelman01111990}'s CI] \label[proposition]{prop:edelci}
     Define  $L_\alpha = 2/\alpha - 1$. For any $\phi \in \mathbb R$,  the set
     \begin{equation}
              ( X - L_\alpha |X - \phi|, X + L_\alpha |X -  \phi| )
    \end{equation}
     is a weak $(1-\alpha)$-confidence interval for mode. 
\end{proposition}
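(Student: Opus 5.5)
The plan is to invert Edelman's p-value from the preceding proposition, using it with the reference point $a = \phi - \theta$ after translating the problem to mode $0$. Fix a $\theta$-unimodal $P$ (CDF convex on $(-\infty,\theta]$ and concave on $[\theta,\infty)$) and a mode $\theta \in M(P)$. Then $Y = X - \theta$ has a distribution in $\cU_0$. By the proposition, for any fixed real $a$,
\[
R_a^{\mathsf E}(Y) = \frac{2|Y-a|}{|Y-a|+|Y|}
\]
is a p-value for $\cU_0$. For $a \ge 0$ this requires care: the proposition is stated for $\cU_0^+$, so we must justify that it extends to all of $\cU_0$. The reason is that when $a>0$ the event $\{R_a^{\mathsf E}(Y) \le u\}$ is the interval $[\frac{2-u}{2}a, \frac{2-u}{2-2u}a] \subset (0,\infty)$. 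On that interval we only need the basic inequality for the restriction of $P$ to $[0,\infty)$, and concavity of the CDF there gives $P((s,t]) \le (t-s)/t \cdot P((0,t]) \le (t-s)/t$ by the same argument as \cref{lem:basic}. The case $a<0$ is symmetric, and $a=0$ is trivial since then $R \equiv 2$ for $Y \neq 0$. The choice $a = \phi - \theta$ is deterministic given $\theta$ and $\phi$, which is what validity requires.

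Next I would compute the acceptance region. We have $\theta \notin \CI$ exactly when $\theta$ is rejected, i.e.\ when $R_{\phi-\theta}^{\mathsf E}(X-\theta) \le \alpha$. Substituting $Y - a = X - \phi$ and $Y = X - \theta$, the rejection condition reads
\[
\frac{2|X-\phi|}{|X-\phi| + |X-\theta|} \le \alpha,
\]
which is equivalent to $|X-\theta| \ge (2/\alpha - 1)|X-\phi| = L_\alpha |X-\phi|$. Hence the complement of the rejection set is exactly $\{\theta : |X - \theta| < L_\alpha|X-\phi|\}$, the stated open interval. This yields $P(\theta \notin \CI) \le P(R \le \alpha) \le \alpha$ for every mode $\theta$, which is the definition of a weak $(1-\alpha)$-confidence interval.

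The main obstacle is justifying that the p-value property holds under $\cU_0$ rather than only $\cU_0^{\pm}$, since a unimodal law places mass on both sides. The localization argument above handles this: the rejection event lies on one side of $0$, so only the one-sided basic inequality is needed. The edge case $X = \phi$, where the interval is empty, has probability zero only under continuity, but it is harmless here. If $X = \phi \ne \theta$ then $R = 0 \le \alpha$, which is already counted in the rejection probability. If $\phi = \theta$ then $a = 0$ and $R = 2$ almost everywhere, so the probability that $\theta$ is excluded is at most $P(X = \theta)$. Under a jump at the mode this atom could be large, so I would check whether the convention $0/0$ in $R$ needs treating as $2$; I expect to argue $R$ is set to $2$ when $X = \theta = \phi$, noting it matches the paper's remark about $\{X\}$ being the CI in that case.
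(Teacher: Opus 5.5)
Your route is the paper's: invert Edelman's p-value with $a=\phi-\theta$, and your acceptance-region algebra is correct. Your sign is the right one; the subscript $\theta-\phi$ in the paper's proof is a typo, as its displayed ratio $2|X-\phi|/(|X-\phi|+|X-\theta|)$ shows. Your localization step is a useful addition: for $a>0$ and $u<1$ the rejection event is $[\tfrac{2-u}{2}a,\tfrac{2-u}{2-2u}a]\subset(0,\infty)$, so only concavity on $[0,\infty)$ is used. The paper merely asserts ``consequently, a p-value for $\cU_0$'' without this justification.

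Your last paragraph does not work as intended. A minor slip first: for $a=0$ and $Y\neq 0$ one has $R_0^{\mathsf E}(Y)=2|Y|/(2|Y|)=1$, not $2$. This is harmless since $\alpha<1$.

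The substantive issue is that no convention for $0/0$ in $R$ changes the set in the statement. That set is the open interval, so it equals $(X,X)=\emptyset$ whenever $X=\phi$. If $\phi$ is itself a mode, $\phi$ is excluded exactly on $\{X=\phi\}$, and the coverage is $1-P(X=\phi)$. Since $\cU_\theta$ explicitly allows a jump at the mode, $P=\delta_\phi$ has unique mode $\phi$ and coverage $0$. A less degenerate example is $\tfrac12\delta_\phi+\tfrac12\,\mathrm{unif}[\phi,\phi+1]$, with coverage $\tfrac12$.

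So over the jump-allowed class the statement is false. Correspondingly, $R^{\mathsf E}_0$ is not a p-value there: the interval degenerates to $\{0\}$, where the basic inequality says nothing. Setting $R=2$ at $X=\theta=\phi$ does give a valid p-value. But inverting it yields the set $\{X\}$ on $\{X=\phi\}$, which is a corrected confidence set, not the stated one.

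The clean fix is to state the assumption the paper silently relies on. This is Edelman's continuous setting, matching the paper's remark that $X=\phi$ has probability $0$. Suppose $P$ is atomless; atoms can only sit at a mode anyway, by convexity and concavity of the CDF. Then $P(X=\phi)=0$, the case $a=0$ contributes nothing, and your argument is complete. The paper's own two-line proof has the same unaddressed hole.
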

\begin{proof}
    We reject the null ``$\text{mode} = \theta$'' if the p-value $R_{\theta - \phi}^{\mathsf{E}}(X-\theta) \le \alpha$. The unrejected $\theta$'s form a $(1-\alpha)$-CI:
    \begin{equation}
       \left\{ \theta :   \frac{2|X-\phi|}{|X-\phi| + |X-\theta|}  > \alpha \right\} 
    \end{equation}
    which equals $( X - L_\alpha|X - \phi|, X + L_\alpha|X - \phi| )$.
\end{proof}

\subsection{Behind-the-scenes: constrained hypotheses}\label{sec:constraints}

We now come to the question of \emph{how} these polar sets, $\fM = \cM^\circ$ in \cref{thm:polar-of-M} and $\fD_{\theta} = \cD_{\theta}^\circ$ in \cref{thm:polar-unimodal-Z}, were discovered. This key to finding these polars is that the distribution classes $\cM$ and $\fD_{\theta}$ can both be generated by \emph{constraints}, in the language of \cite{larsson2025variables}. If $\fG$ is a set of measurable functions $\mathbb A \to \mathbb R$, we define the set of distributions these functions as constraints generate as
\begin{equation}
       \cP(\mathbb A; \fG) = \left\{ P  \in \cP(\mathbb A) :  \int |f| \d P < \infty \text{ and }  \int f \d P \le 0, \; \forall f \in \fG \right\}. 
\end{equation}
Then, it can be seen that any function $h$ satisfying
\begin{equation}
    h \ge 0, \quad h(x) = 1 + \sum_{f \in \fH} \pi_f f(x)
\end{equation}
where $\{\pi_f\}_{f\in \fH}$ are nonnegative constants and $\fH$ is a finite subset of $\fG$,
would satisfy $h \in  \cP(\mathbb A; \mathbf G)^\circ$. In fact, it is proved by \citet[Theorem 3.1]{larsson2025variables} that if $\fG$ is finite the set $\cP(\mathbb A; \mathbf G)^\circ$ (i.e.\ \emph{all} e-values for $\cP(\mathbb A; \fG)$) contains exactly those nonnegative functions that are bounded by some
\begin{equation}\label{eqn:combination}
  1 + \sum_{f \in \fG} \pi_f f(x), \quad \pi_f \ge 0.
\end{equation}
The case for infinite $\fG$ is harder to obtain, and in general admits no closed-form expression. However, as we shall show soon, while the classes $\cM$ and $\cD_{\theta}$ are both generated by some infinite set of constraint functions, their polars $\cM^\circ$ and $\cD_\theta^\circ$ coincide with those functions \eqref{eqn:combination} where the infinite sum $\sum_{f \in \fG}$ is actually finite.

\begin{fact}\normalfont For $k \in \mathbb Z^{\ge 0}$,
    let $h_k(n)  = -\id_{\{n=k\}} + \id_{\{n=k+1\}}$ and let $\mathbf G_M = \{ h_0, h_1,\dots \}$. Then,
    \begin{equation}
        \cM = \cP( \mathbb Z^{\ge 0}; \mathbf G_M);
    \end{equation}
    and the set of nonnegative functions of the following form
    \begin{equation}\label{eqn:E-GM}
         E(n) = 1 + \sum_{k=0}^\infty \pi_{k+1} h_k(n) = 1 + \pi_n - \pi_{n+1}
    \end{equation}
    coincides with $ \cM^\circ = \fM$ defined in \eqref{eqn:set-M}, which can can be seen from the reparametrization $\rho_n = n - \pi_n$.
\end{fact}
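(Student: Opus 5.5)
The Fact has two claims: $\cM = \cP(\mathbb Z^{\ge 0};\fG_M)$, and the nonnegative functions of the form \eqref{eqn:E-GM} are exactly $\fM$. I will prove them in that order, reducing the second to \cref{thm:polar-of-M} by a change of parameters.

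\textbf{First claim.} Each $h_k$ is bounded, so $\int|h_k|\,\d P<\infty$ for every probability $P$. Moreover $\int h_k\,\d P = f_P(k+1)-f_P(k)$. Hence the constraints $\int h_k\,\d P\le 0$ for all $k$ say exactly that $f_P(k+1)\le f_P(k)$ for all $k$, which is the definition of $\cM$.

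\textbf{Second claim.} I first verify the telescoping identity. For $n\ge 0$, the sum $\sum_k \pi_{k+1}h_k(n)$ picks up $-\pi_{n+1}$ from the term $k=n$. When $n\ge 1$ it also picks up $+\pi_n$ from the term $k=n-1$. Setting the convention $\pi_0:=0$, this gives $E(n)=1+\pi_n-\pi_{n+1}$ for every $n$. The sum is pointwise finite because only two terms are nonzero at each $n$.

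Next I set $\rho_n := n-\pi_n$, so $\rho_0=0$. Then
\[
\rho_{n+1}-\rho_n = 1-\pi_{n+1}+\pi_n = E(n).
\]
The conditions defining $\fR$ translate as follows:
\begin{itemize}
\item $\rho_n\le n$ is equivalent to $\pi_n\ge 0$.
\item $\rho$ nondecreasing is equivalent to $E\ge 0$.
\end{itemize}
So the map $\pi\mapsto\rho$ is a bijection between nonnegative sequences $\pi$ (with $\pi_0=0$) that make $E\ge0$ and sequences $\rho\in\fR$. Under this bijection, $E$ of the form \eqref{eqn:E-GM} corresponds to $\Delta\rho$. Hence the set described by \eqref{eqn:E-GM} equals $\Delta\fR$, which is $\fM=\cM^\circ$ by \cref{thm:polar-of-M}.

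\textbf{Main obstacle.} The only delicate point is bookkeeping: the index shift ($\pi_{k+1}$ multiplies $h_k$) together with the boundary convention $\pi_0=0$, which must line up exactly with $\rho_0=0$. Everything else is immediate.
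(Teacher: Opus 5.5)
Your proposal is correct and follows the paper's argument. The paper likewise reads off $\int h_k\,\d P = f_P(k+1)-f_P(k)\le 0$ and identifies the polar through the reparametrization $\rho_n = n-\pi_n$ together with \cref{thm:polar-of-M}; your version just makes explicit what the paper leaves implicit, namely the convention $\pi_0=0$ and the fact that $\pi\ge 0$ (the framework's nonnegative weights) is exactly what yields $\rho_n\le n$.
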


\begin{fact}\normalfont  For $k \in \mathbb Z^{\ge 0}$,
    still defining $h_k(n)  = -\id_{\{n=k\}} + \id_{\{n=k+1\}}$, but now define $\mathbf G_D = \{ h_0(n), h_1(n),\dots ;  h_0(-n), h_1(-n),\dots \}$. Then,
    \begin{equation}
        \cD_0 = \cP( \mathbb Z; \mathbf G_D).
    \end{equation}
    and the set of nonnegative functions of the following form
    \begin{equation}\label{eqn:E-GD}
         E(n) = 1 + \sum_{k=0}^\infty \pi_{k+1} h_k(n) +  \sum_{k=0}^\infty \sigma_{k+1} h_k(-n) = \begin{cases}
             1 + \pi_n - \pi_{n+1} & n \ge 1 \\
             1 - \pi_1 - \sigma_1 & n = 0 \\
             1 + \sigma_{-n} - \sigma_{-n+1} & n \le -1
         \end{cases}
    \end{equation}
     coincides with $\cD_0 ^\circ = \fD_0$ defined in \eqref{eqn:set-Dtheta}, which can be seen from the reparametrization $\rho_n = n - \pi_n$ and $\eta_n = n-\sigma_n$.
\end{fact}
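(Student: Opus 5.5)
The plan has two parts: the set identity $\cD_0 = \cP(\mathbb Z;\fG_D)$, and the identification of the nonnegative functions of the form \eqref{eqn:E-GD} with $\fD_0$. The second part reduces to the representation in \cref{thm:polar-unimodal-Z} with $\theta = 0$.

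\emph{The constraint set.} Every $h_k(\pm n)$ is bounded, so the integrability requirement in the definition of $\cP(\mathbb Z;\fG_D)$ holds for every probability on $\mathbb Z$. For $k \ge 0$ we have
\begin{equation}
\int h_k(n)\,P(\d n) = f_P(k+1) - f_P(k), \qquad \int h_k(-n)\,P(\d n) = f_P(-k-1) - f_P(-k).
\end{equation}
Requiring all of these to be $\le 0$ says exactly that $f_P$ is non-increasing on $\{0,1,2,\dots\}$ and non-decreasing on $\{\dots,-1,0\}$. This is the defining condition of $\cD_0$ with $\theta = 0$, so the two sets coincide.

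\emph{Computing the sum.} Fix $n$. Only finitely many terms of \eqref{eqn:E-GD} are nonzero, so the series is well defined pointwise.
\begin{itemize}
\item For $n \ge 1$, the nonzero terms are $h_{n-1}(n) = 1$ and $h_n(n) = -1$. This gives $E(n) = 1 + \pi_n - \pi_{n+1}$.
\item For $n = 0$, only $h_0(0) = -1$ and $h_0(-0) = -1$ contribute. This gives $E(0) = 1 - \pi_1 - \sigma_1$.
\item For $n \le -1$, the case is symmetric in $\sigma$ and gives $E(n) = 1 + \sigma_{-n} - \sigma_{-n+1}$.
\end{itemize}
This verifies the displayed closed form.

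\emph{Matching parameters with \cref{thm:polar-unimodal-Z}.} Set $\rho_n = n - \pi_n$ and $\eta_n = n - \sigma_n$ for $n \ge 1$. Then:
\begin{itemize}
\item for $n \ge 1$, $1 + \pi_n - \pi_{n+1} = \rho_{n+1} - \rho_n$;
\item $1 - \pi_1 - \sigma_1 = \rho_1 + \eta_1 - 1$;
\item for $n \le -1$, $1 + \sigma_{-n} - \sigma_{-n+1} = \eta_{-n+1} - \eta_{-n}$.
\end{itemize}
These are exactly the three cases of \eqref{eqn:set-Dtheta} with $\theta = 0$. Next, translate the side constraints.
\begin{itemize}
\item $\pi_k \ge 0$ and $\sigma_k \ge 0$ are equivalent to $\rho_k \le k$ and $\eta_k \le k$.
\item $E(n) \ge 0$ for $n \ge 1$ is equivalent to $\rho_1 \le \rho_2 \le \cdots$, and $E(n) \ge 0$ for $n \le -1$ is equivalent to $\eta_1 \le \eta_2 \le \cdots$.
\item $E(0) \ge 0$ is equivalent to $\rho_1 + \eta_1 \ge 1$.
\end{itemize}
The map $(\pi,\sigma) \mapsto (\rho,\eta)$ is a bijection in both directions. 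Hence the nonnegative functions of the form \eqref{eqn:E-GD} are exactly the functions in the parametrized family \eqref{eqn:set-Dtheta} at $\theta = 0$. By \cref{thm:polar-unimodal-Z}, that family equals $\cD_0^\circ = \fD_0$.

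\emph{Main obstacle.} The only delicate point is index bookkeeping at the boundary $n = 0$, where both $h_0(n)$ and $h_0(-n)$ contribute. One must check that the nonnegativity of $E$ there corresponds exactly to the coupling condition $\rho_1 + \eta_1 \ge 1$, and that $\rho_1, \eta_1 \ge 0$ are implied rather than imposed. Indeed $\rho_1 \ge 0$ follows from $\rho_2 - \rho_1 = E(1) \ge 0$ together with $\rho_1 \le 1$? No — it follows instead from $\rho_1 = 1 - \pi_1$ and $\pi_1 \le 1 - \sigma_1 \le 1$, using $E(0) \ge 0$ and $\sigma_1 \ge 0$; the same argument gives $\eta_1 \ge 0$. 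This is consistent with the remark after \cref{thm:polar-unimodal-Z}. Everything else is direct substitution.
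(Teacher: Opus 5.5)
Your proof is correct and follows the same route as the paper. The paper justifies the Fact only by the reparametrization $\rho_n = n-\pi_n$, $\eta_n = n-\sigma_n$ and an appeal to \cref{thm:polar-unimodal-Z}; you spell out the termwise computation and translate $\pi_k,\sigma_k\ge 0$ and $E\ge 0$ into exactly the constraints of \eqref{eqn:set-Dtheta}. The closing aside on $\rho_1,\eta_1\ge 0$, including the visible ``? No ---'' self-correction, is unnecessary because \eqref{eqn:set-Dtheta} does not impose these as separate constraints, so you should trim it.
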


It is important to remark here that the distribution classes $\cM$ and $\fD_{\theta}$ being constraint-generated does \emph{not} directly imply that their polar sets are exactly expressible via the constraints. The constraints only act here as a heuristic for us to make educated guesses of the polars, which we verify ``manually''.
The quoted result by \citet[Theorem 3.1]{larsson2025variables}, again, only holds for finitely generated hypotheses. The hypotheses $\cM$ and $\fD_{\theta}$ are countably generated, although through the expressions \eqref{eqn:E-GM} and \eqref{eqn:E-GD} it can be seen that the formally infinite sums are always well-defined finite sums (without the need to invoke any topology) within these constraints. We speculate that this is related to why the polars are exactly the ones we construct from the constraints, and we leave the formal treatment of this idea for future work.

\subsection{Tensorization of reverse information projection}\label{sec:tensorization}

We have extensively utilized the concepts of the numeraire and the reverse information projection developed by \cite{numeraire} in \cref{sec:ripr,sec:cont-ripr}, where we alluded to the fact that these one-observation results sometiems tensorize into the multiple-observation space.
In this section, we review the key results by \cite{numeraire} necessary for such multiple-observation generalizations. Throughout the section, we consider a measurable domain $\mathbb A$, a set of probabilities $\cP$ and a probability $Q \notin \cP$ on $\mathbb A$; as well as the set of all $\cP$-e-values, $\cP^\circ$, and the effective null, $\efn{\cP}$ which we now denote by $\cP_\eff$. We always assume $Q \ll \cP$; that is,
\begin{equation}
   (\forall P \in \cP,  P(A) = 0) \implies  Q(A) = 0.
\end{equation}

First, we recall the following main result from \cite{numeraire} concerning \emph{finding} the numeraire.

\begin{proposition}\label[proposition]{prop:fundamental-numeraire}
    Let $f \in \fF(\mathbb A)$. The following are equivalent:
    \begin{itemize}
        \item[(i)] $Q(\{f > 0\}) = 1$, $f \in \cP^\circ$, and for all $g \in \cP^\circ$, $ \Expw_{X\sim Q} \log \frac{g(X)}{f(X)} \le 0$. That is, $f$ is a $Q$-a.s.\ positive $\cP$-e-value that is log-optimal.
        \item[(ii)] $f= \frac{\d Q}{\d P^*}$ for some $P^* \in \cP_\eff$ with $P^* \sim Q$, and $f \in \cP^\circ$. That is, $f$ is a $\cP$-e-value and also a likelihood ratio between $Q$ and an equivalent subprobability in the effective null.
    \end{itemize}
    When this happens, $f$ is the \emph{numeraire e-value} for $\cP$ with respect to $Q$, and $P^*$ the \emph{reverse information projection (RIPr)} of $Q$ onto $\cP$. Additionally, the numeraire $f$ is one of the most e-powerful $\cP$-e-values under $Q$:
    \begin{equation}\label{eqn:most-e-powerful}
        \Expw_{X\sim Q} \log f(X) = \sup_{g \in \cP^\circ} \Expw_{X\sim Q} \log g(X)
    \end{equation}
    where we stipulate $ \Expw_{X\sim Q} \log g(X) = -\infty$ whenever $\Expw_{X\sim Q} (\log g(X))^- = \infty$.
\end{proposition}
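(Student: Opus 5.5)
We prove the two implications separately; the inequality \eqref{eqn:most-e-powerful} will then follow from (i). Throughout, the only facts we use about the effective null $\cP_\eff = {^\circ(\cP^\circ)}$ are its definition, $\int g\,\d P \le 1$ for all $P\in\cP_\eff$ and $g\in\cP^\circ$, and the convexity of $\cP^\circ$.

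\emph{(ii) $\Rightarrow$ (i).} Let $f = \d Q/\d P^*$ with $P^*\sim Q$.
\begin{itemize}
\item Equivalence $P^*\sim Q$ gives $f\in(0,\infty)$ $Q$-a.s., so $Q(\{f>0\})=1$.
\item For any $g\in\cP^\circ$, since $P^*\in\cP_\eff$, we have $\Expw_{X\sim Q}[g(X)/f(X)] = \int g\,\d P^* \le 1$.
\item Now apply $\log x\le x-1$ pointwise. This shows that the positive part of $\log(g/f)$ is $Q$-integrable, so $\Expw_Q\log(g/f)$ is well defined in $[-\infty,\infty)$. It also gives $\Expw_Q \log(g/f)\le \Expw_Q(g/f)-1\le 0$.
\end{itemize}
Together with the hypothesis $f\in\cP^\circ$, this is exactly (i).

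\emph{(i) $\Rightarrow$ (ii).}
\begin{itemize}
\item \textbf{$f$ is finite $Q$-a.s.} Suppose $f=\infty$ on a set $A$ with $Q(A)>0$. By $Q\ll\cP$ there is some $P\in\cP$ with $P(A)>0$. Then $\Expw_P f=\infty$, contradicting $f\in\cP^\circ$.
\item \textbf{Definition of $P^*$.} Set $\d P^*/\d Q = 1/f$ on $\{f>0\}$ and $0$ elsewhere. Since $f\in(0,\infty)$ $Q$-a.s., we get $P^*\sim Q$ and $f=\d Q/\d P^*$.
\item \textbf{Reduction.} It remains to show $P^*\in\cP_\eff$. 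This means proving $\Expw_Q[g/f]\le 1$ for every $g\in\cP^\circ$.
\item \textbf{Perturbation.} For $\varepsilon\in(0,1)$, the convex combination $g_\varepsilon=(1-\varepsilon)f+\varepsilon g$ lies in $\cP^\circ$. Log-optimality of $f$ then gives
\[
\Expw_Q\bigl[\varepsilon^{-1}\log\bigl(1+\varepsilon(Y-1)\bigr)\bigr]\le 0, \qquad Y = g/f .
\]
\item \textbf{Passing to the limit.} For fixed $x\ge 0$, the map $\varepsilon\mapsto \log(1+\varepsilon(x-1))$ is concave and vanishes at $0$. Hence its difference quotient over $\varepsilon$ is nonincreasing in $\varepsilon$, and it increases to $x-1$ as $\varepsilon\downarrow 0$. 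At $\varepsilon=1/2$ it is bounded below by $-2\log 2$. Monotone convergence therefore yields $\Expw_Q[Y-1]\le 0$, i.e.\ $\Expw_Q[g/f]\le 1$.
\end{itemize}
So $P^*\in\cP_\eff$, which proves (ii).

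\emph{Most e-powerful property.} For $g\in\cP^\circ$ with $\Expw_Q(\log g)^- <\infty$, write $\log g = \log f + \log(g/f)$ and use $\Expw_Q \log(g/f)\le 0$ from (i) to get $\Expw_Q\log g\le \Expw_Q\log f$. Under the stated convention, every other $g$ has $\Expw_Q\log g=-\infty$ and poses no issue.

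\textbf{Main obstacle.} The delicate part is the bookkeeping with infinities. One must ensure each expectation of a logarithm is well defined, which is handled by the bound $\log x\le x-1$ in one direction and by the ordering used in monotone convergence in the other. One must also check that $\log f$ may be $+\infty$ in expectation without invalidating the comparison; in that case the inequality \eqref{eqn:most-e-powerful} is trivially an equality at $+\infty$. The limit step in (i)$\Rightarrow$(ii) is the genuinely nontrivial one, and we would settle it via the monotone difference-quotient argument above rather than dominated convergence.
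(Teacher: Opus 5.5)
Your proof is correct. The paper gives no proof of this proposition: it is recalled from \cite{numeraire} and later invoked as their Theorem~3.6. So your argument is a self-contained replacement for a citation, not a variant of an argument in the paper. Your route is the standard one. The change of measure $\d P^*/\d Q = 1/f$ turns $P^*\in\cP_\eff$ into the numeraire inequality $\Exp_Q[g/f]\le 1$ for all $g\in\cP^\circ$. That inequality is equivalent to log-optimality: one direction uses $\log x\le x-1$, the other the convex perturbation $(1-\varepsilon)f+\varepsilon g$. Your monotone difference quotients, with the uniform lower bound $-2\log 2$, correctly justify the passage to the limit. The citation additionally gives existence of the numeraire in general, which this proposition does not assert. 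The direction the paper actually relies on is (ii)$\Rightarrow$(i), the elementary half of your proof. It is used to certify $\tilde Q^a$ as the RIPr in \cref{thm:ripr-lcm}, in its continuous analogue, and in the tensorization results.

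A few small remarks:
\begin{itemize}
\item Since $f\in\fF(\mathbb A)$ is finite-valued by definition, your first bullet is unnecessary. The standing assumption $Q\ll\cP$ is not used anywhere in the equivalence.
\item In the final step, state explicitly that $(\log f)^-\le(\log g)^-+(\log(g/f))^+$, and that $(\log(g/f))^+\le g/f$ is integrable. This gives $\Exp_Q\log f>-\infty$ whenever $\Exp_Q(\log g)^-<\infty$. That rules out $\Exp_Q\log f=-\infty$ and justifies $\Exp_Q\log g=\Exp_Q\log f+\Exp_Q\log(g/f)$ when $\Exp_Q\log f<\infty$.
\item The definite article in ``the numeraire'' presumes $Q$-a.s.\ uniqueness, which takes one line. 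If $f_1,f_2$ both satisfy (i), then $Z=f_2/f_1$ has $\Exp_Q Z\le 1$ and $\Exp_Q[1/Z]\le 1$, and Jensen forces $Z=1$ $Q$-a.s.
\end{itemize}
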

As we have already noted before \cref{thm:ripr-lcm},
the characterization of log-optimality (i) above is stronger than the e-power
maximality \eqref{eqn:most-e-powerful}, since (i) is able to distinguish the numeraire $f$ among infinitely e-powerful e-values ($g\in\cP^\circ$ such that $\Expw_{X\sim Q} \log g(X)  = \infty$ ) if they exist.

We now consider two larger nulls.

\begin{definition}
    Let $\cP^2 = \{ P_1 \otimes P_2 : P_1, P_2 \in \cP \} $ and $\cP^2_\iids = \{ P \otimes P: P \in \cP \}$.
\end{definition}

That is, $\cP^2$ is the set of all distributions of independent pairs $(X, Y)$ where both $X$ and $Y$'s distributions belong to $\cP$, whereas $\cP^2_\iids$ is the set of all distributions of \emph{i.i.d.}\ pairs $(X, Y)$ where  $X$ and $Y$'s common distribution belongs to $\cP$.

First, let us discuss the effective nulls $(\cP^2)_\eff$ and $(\cP^2_\iids)_\eff$

\begin{lemma}\label[lemma]{lem:product-eff}
    If $P_1, P_2 \in \cP_\eff$, then $P_1 \otimes P_2 \in (\cP^2)_\eff$. That is, $(\cP_\eff)^2 \subseteq (\cP^2)_\eff$.
\end{lemma}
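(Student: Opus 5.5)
The plan is to verify directly that $P_1 \otimes P_2$ belongs to the prepolar of $(\cP^2)^\circ$. Here $\cP_\eff = {^\circ(\cP^\circ)}$, so membership means
\[
\int g \, \d(P_1\otimes P_2) \le 1 \quad \text{for every } g \in (\cP^2)^\circ.
\]
So fix $P_1, P_2 \in \cP_\eff$ and a nonnegative measurable $g$ on $\mathbb A \times \mathbb A$ with $\int g \, \d(R_1 \otimes R_2) \le 1$ for all $R_1, R_2 \in \cP$. The idea is to peel off one coordinate at a time using Tonelli's theorem. Nonnegativity of $g$ makes every iterated integral well defined in $[0,\infty]$, and $\sigma$-finiteness holds because $P_1, P_2$ are subprobabilities.

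\textbf{Step 1.} Fix $R_2 \in \cP$ and define $h_{R_2}(x) = \int g(x,y) \, R_2(\d y)$. This is nonnegative and measurable in $x$ by Tonelli. For every $R_1 \in \cP$, Tonelli gives
\[
\int h_{R_2} \, \d R_1 = \int g \, \d(R_1\otimes R_2) \le 1,
\]
so $h_{R_2} \in \cP^\circ$. Since $P_1 \in \cP_\eff$, we get $\int h_{R_2} \, \d P_1 \le 1$.

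\textbf{Step 2.} Define $k(y) = \int g(x,y) \, P_1(\d x)$, again nonnegative and measurable. For every $R_2 \in \cP$, Tonelli and Step 1 give
\[
\int k \, \d R_2 = \int\!\!\int g(x,y) \, R_2(\d y) \, P_1(\d x) = \int h_{R_2} \, \d P_1 \le 1.
\]
Hence $k \in \cP^\circ$. Because $P_2 \in \cP_\eff$, it follows that
\[
\int g \, \d(P_1\otimes P_2) = \int k \, \d P_2 \le 1.
\]

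Since $g$ was an arbitrary element of $(\cP^2)^\circ$, this shows $P_1\otimes P_2 \in {^\circ((\cP^2)^\circ)} = (\cP^2)_\eff$.

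The only delicate point is the justification of Tonelli. That requires nonnegativity and $\sigma$-finiteness, both of which hold. There is also a subtlety about the order of quantifiers: Step 1 must hold for every $R_2$ before it is used in Step 2, and it does. No duality or compactness argument is needed, so I expect no real obstacle.
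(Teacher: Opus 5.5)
Your proof is correct and follows the paper's argument essentially step for step. The paper also fixes a $\cP^2$-e-value and integrates out the second coordinate against an arbitrary member of $\cP$, which yields a $\cP$-e-value that can be tested against $P_1$. It then swaps the order of integration to get a $\cP$-e-value in $y$ that can be tested against $P_2$. Your explicit appeal to Tonelli simply makes rigorous the order swap the paper performs silently. The $\sigma$-finiteness it needs is automatic, since the constant $1$ lies in $\cP^\circ$.
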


\begin{proof}
    Let $f$ be a $\cP^2$-e-value. Then, we deduce the following:
    \begin{gather}
      \forall P_3, P_4 \in \cP,\;  \Exp_{(X,Y) \sim P_3 \otimes P_4} f(X, Y) \le 1;
        \\
      \forall P_3, P_4 \in \cP,\;  \Exp_{X \sim P_3} \Exp_{Y \sim P_4} f(X, Y ) \le 1;
        \\
     \forall P_4 \in \cP,\;   x \mapsto \Exp_{Y \sim P_4} f(x, Y) \text{ is a $\cP$-e-value};
         \\
     \forall P_4 \in \cP,\;     \Exp_{X \sim P_1} \Exp_{Y \sim P_4} f(X, Y) \le 1;
        \\
     \forall P_4 \in \cP,\;    \Exp_{Y \sim P_4} \Exp_{X \sim P_1}  f(X, Y) \le 1;
        \\
        y \mapsto \Exp_{X \sim P_1}  f(X, y) \text{ is a $\cP$-e-value};
         \\
       \Exp_{Y \sim P_2} \Exp_{X \sim P_1}  f(X, Y) \le 1;
       \\
        \Exp_{(X,Y) \sim P_1 \otimes P_2} f(X, Y) \le 1.
    \end{gather}
    This concludes the proof.
\end{proof}

\begin{remark}
    If $P \in \cP_\eff$, then it is possible that $P^2 = P\otimes P \notin (\cP^2_\iids)_\eff$.
    That is, $(\cP_\eff)^2_\iids \nsubseteq (\cP^2_\iids)_\eff$.

    Consider the following example. $\mathbb A = \{ 0, 1 \}$. $\cP = \{ \ber(0.1), \ber(0.9) \}$. Then $\ber(0.5) \in \cP_\eff$. However, the two-observation function
    \begin{equation}
        f(X, Y) = \frac{1}{0.18} \id_{\{ X \neq Y \}}
    \end{equation}
    is an e-value for $\ber(0.1) \otimes \ber(0.1)$, for $\ber(0.9)\otimes \ber(0.9)$, but not for $\ber(0.5)\otimes \ber(0.5)$.

    That is, the second i.i.d.\ observation can distinguish $\ber(0.5)$ from these ``extreme'' distributions: it's much more likely for the two observations to differ under $\ber(0.5)$. The previous \cref{lem:product-eff} states that one won't be able to distinguish $\ber(0.5)\otimes  \ber(0.5)$ from the null if the null not only contains $\ber(0.1) \otimes \ber(0.1)$ and $\ber(0.9)\otimes \ber(0.9)$, \emph{but also $\ber(0.1) \otimes \ber(0.9)$ and $\ber(0.1)\otimes \ber(0.9)$}.
\end{remark}

These two types of null thus have different criteria for RIPr tensorization.

\begin{proposition}
    Let $P^*_1 \in \cP_\eff$ be the RIPr of $Q_1$ on $\cP$; $P^*_2 \in \cP_\eff$ be the RIPr of $Q_2$ onto $\cP$. Then, $P^*_1 \otimes P^*_2$ is the RIPr of $Q_1 \otimes Q_2$ on $\cP^2$.
\end{proposition}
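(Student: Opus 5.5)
We verify condition (ii) of \cref{prop:fundamental-numeraire} for the pair $Q_1\otimes Q_2$ and the null $\cP^2$. Let $f_i = \d Q_i/\d P_i^*$ be the numeraires. Then $P_i^*\in\cP_\eff$, $P_i^*\sim Q_i$, and $f_i\in\cP^\circ$. Set $P^* = P_1^*\otimes P_2^*$ and $f(x,y)=f_1(x)f_2(y)$. Three properties must be established.

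First, $P^*$ lies in the effective null $(\cP^2)_\eff$. This is exactly \cref{lem:product-eff}, since $P_1^*,P_2^*\in\cP_\eff$.

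Second, $P^*\sim Q_1\otimes Q_2$ and $f=\d(Q_1\otimes Q_2)/\d(P_1^*\otimes P_2^*)$. Both follow from the standard fact that product measures of mutually equivalent factors are equivalent, with Radon--Nikodym derivative equal to the product of the factor derivatives. One checks this by Fubini on measurable rectangles and extends via the $\pi$--$\lambda$ theorem. $\sigma$-finiteness holds because all measures involved are (sub)probabilities.

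Third, $f\in(\cP^2)^\circ$. For any $P_3\otimes P_4\in\cP^2$, Tonelli gives
\begin{equation}
\Exp_{P_3\otimes P_4} f = \Exp_{P_3} f_1\cdot \Exp_{P_4} f_2 \le 1,
\end{equation}
since each $f_i$ is a $\cP$-e-value and all quantities are nonnegative.

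With these three facts, (ii) of \cref{prop:fundamental-numeraire} holds. Hence $f$ is the numeraire and $P_1^*\otimes P_2^*$ is the RIPr of $Q_1\otimes Q_2$ onto $\cP^2$.

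One hypothesis of the proposition also needs checking: $Q_1\otimes Q_2\ll\cP^2$. This holds because $Q_1\otimes Q_2\sim P_1^*\otimes P_2^*$, and by the first step every $(\cP^2)$-null set is $P_1^*\otimes P_2^*$-null. Indeed, for a set $A$ null under all of $\cP^2$, the function $c\,\id_A$ is a $\cP^2$-e-value for every $c>0$. Since $P^*\in(\cP^2)_\eff$, this forces $P^*(A)\le 1/c$ for all $c$, so $P^*(A)=0$.

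The main obstacle, membership in the effective null, is already handled by \cref{lem:product-eff}. The only delicate point left is the $P^*\sim Q$ requirement. This requirement explains why the numeraire is taken in the positive, equivalent form. If the paper's RIPr is only absolutely continuous with respect to $Q$, I would apply the equivalence on the support of $Q_1\otimes Q_2$, where $f_1,f_2>0$ $Q$-a.s.

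The contrast with the remark is worth noting. The argument uses the full product null $\cP^2$ in two places: the e-value bound for $f$ (which is actually valid for any product null) and \cref{lem:product-eff} (which genuinely needs mixed products). For this reason the statement does not extend to $\cP^2_\iids$ without an extra domination condition like the one invoked in \cref{prop:ripr-bounded-nobs}.
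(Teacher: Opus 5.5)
Your proposal is correct and follows the paper's proof essentially step for step. Both use the equivalence $Q_1\otimes Q_2\sim P_1^*\otimes P_2^*$, membership of $P_1^*\otimes P_2^*$ in $(\cP^2)_\eff$ via \cref{lem:product-eff}, the product form of the Radon--Nikodym derivative, and the fact that a product of independent e-values is a $\cP^2$-e-value, and then invoke condition (ii) of \cref{prop:fundamental-numeraire}; your extra check that $Q_1\otimes Q_2\ll\cP^2$ is a correct addition that the paper leaves implicit.
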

\begin{proof}
    First, since $Q_1 \sim P^*_1$ and $Q_2 \sim P^*_2$, $Q_1\otimes Q_2 \sim P^*_1 \otimes P^*_2$. By \cref{lem:product-eff}, we see that $P^*_1 \otimes P^*_2 \in (\cP^2)_\eff$. Thus, by \cref{prop:fundamental-numeraire}, it remains to show that $\frac{\d Q_1\otimes Q_2}{\d P^*_1 \otimes P^*_2}$ is a $\cP^2$-e-value.

    The Radon-Nikodym derivative of product measures is the product of Radon-Nikodym derivatives (see e.g.\ Theorem 8 in \cite{bermudez2025proofs}):
    \begin{equation}
        \frac{\d Q_1\otimes Q_2}{\d P^*_1 \otimes P^*_2}(X, Y) = \frac{\d Q_1}{\d P^*_1 }(X)  \cdot \frac{\d Q_2}{\d P^*_2 }(Y) 
    \end{equation}
    which is a product of two independent e-values, thus again an e-value for $\cP^2$.
\end{proof}

\begin{proposition}
    Let $P^* \in \cP$ (nota bene: not $\cP_\eff$) be the RIPr of $Q$ onto $\cP$. Then, $P^* \otimes P^*$ is the RIPr of $Q \otimes Q$ onto $\cP^2_\iids$.
\end{proposition}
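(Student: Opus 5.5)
We will verify condition (ii) of \cref{prop:fundamental-numeraire} for the candidate $P^*\otimes P^*$ against the null $\cP^2_\iids$ and the alternative $Q\otimes Q$. This requires three things: (a) $P^*\otimes P^* \sim Q\otimes Q$; (b) $P^*\otimes P^* \in (\cP^2_\iids)_\eff$; and (c) the ratio $\frac{\d (Q\otimes Q)}{\d (P^*\otimes P^*)}$ is a $\cP^2_\iids$-e-value.

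Step (a) is immediate. Since $P^*$ is the RIPr, $P^*\sim Q$ by \cref{prop:fundamental-numeraire}(ii), and equivalence passes to product measures.

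Step (b) is the place where the hypothesis $P^*\in\cP$ (rather than merely $P^*\in\cP_\eff$) enters, and it is the step the preceding Remark warns about. Because $P^*$ is itself a probability measure in $\cP$, the product $P^*\otimes P^*$ lies in $\cP^2_\iids$ itself. Every set is contained in its own effective null (bipolar), since $\cA\subseteq {^\circ(\cA^\circ)}$. Hence $P^*\otimes P^* \in (\cP^2_\iids)_\eff$ trivially, and we sidestep the failure of $(\cP_\eff)^2_\iids\subseteq(\cP^2_\iids)_\eff$.

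Step (c) uses the same factorization of Radon--Nikodym derivatives as in the previous proposition:
\begin{equation}
\frac{\d (Q\otimes Q)}{\d (P^*\otimes P^*)}(X,Y)=f(X)f(Y), \qquad f = \frac{\d Q}{\d P^*}\in\cP^\circ.
\end{equation}
For any $P\in\cP$, independence under $P\otimes P$ gives $\Exp_{P\otimes P} f(X)f(Y) = (\Exp_P f)^2\le 1$. So $f\otimes f$ is an e-value for $\cP^2$, and a fortiori for the smaller class $\cP^2_\iids\subseteq\cP^2$.

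Combining (a)–(c), \cref{prop:fundamental-numeraire} shows that $f\otimes f$ is the numeraire and $P^*\otimes P^*$ is the RIPr of $Q\otimes Q$ onto $\cP^2_\iids$. There is a technical point: the proposition as stated is for a one-observation null on $\mathbb A$. We apply it on $\mathbb A^2$ with null $\cP^2_\iids$, which needs $Q\otimes Q\ll\cP^2_\iids$. This follows from $Q\ll P^*\in\cP$, since a set that is null for all of $\cP^2_\iids$ is in particular $P^*\otimes P^*$-null and hence $Q\otimes Q$-null.

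The only genuine obstacle is therefore step (b), which the membership $P^*\in\cP$ resolves outright. The same argument extends by induction to $n$-fold products.
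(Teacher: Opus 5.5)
Your proposal is correct and matches the paper's proof step for step: $P^*\otimes P^*\sim Q\otimes Q$, then $P^*\otimes P^*\in\cP^2_\iids\subseteq(\cP^2_\iids)_\eff$ because $P^*\in\cP$, then the factorization of $\frac{\d (Q\otimes Q)}{\d (P^*\otimes P^*)}$ into a product of two independent $\cP$-e-values. Your explicit check that $Q\otimes Q\ll\cP^2_\iids$, which is needed to invoke \cref{prop:fundamental-numeraire} on $\mathbb A^2$, is a small point the paper leaves implicit.
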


\begin{proof}
    Easy to see that $P^* \otimes P^* \sim Q \otimes Q$, and $P^* \otimes P^* \in \cP^2_\iids \subseteq (\cP^2_\iids)_\eff$. To see $\frac{\d Q\otimes Q}{\d P^* \otimes P^*}$ is a $\cP^2_\iids$-e-value:
    \begin{equation}
        \frac{\d Q\otimes Q}{\d P^* \otimes P^*}(X, Y) = \frac{\d Q}{\d P^* }(X)  \cdot \frac{\d Q}{\d P^* }(Y) 
    \end{equation}
    is a product of two independent e-values, thus again an e-value for $\cP^2_\iids$.
\end{proof}

Sometimes, even though the one-observation RIPr $P^* \notin \cP$, we are able to upper bound it by some probability in $\cP$ and thus still characterize the two-observation RIPr.

\begin{proposition}\label{prop:ripr-bounded-2obs}
    Suppose $P^* \in \cP_\eff$ the RIPr of $Q$ onto $\cP_\eff$ satisfies $P^* \le P$ for some $P \in \cP$. Then, $P^* \otimes P^*$ is the RIPr of $Q \otimes Q$ onto $\cP^2_\iids$.
\end{proposition}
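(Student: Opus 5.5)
We would verify the two conditions of \cref{prop:fundamental-numeraire}(ii) for the candidate $P^*\otimes P^*$ and the null $\cP^2_\iids$.

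\textbf{Equivalence.} Since $P^*$ is the RIPr of $Q$, we have $P^* \sim Q$. Product measures of mutually equivalent factors are equivalent, so $P^*\otimes P^* \sim Q\otimes Q$.

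\textbf{Membership in the effective null.} We must show $P^*\otimes P^* \in (\cP^2_\iids)_\eff$, i.e.\ that every $\cP^2_\iids$-e-value $f\ge 0$ satisfies $\Exp_{P^*\otimes P^*} f \le 1$. This is where the domination hypothesis enters, replacing \cref{lem:product-eff}, which is unavailable for the i.i.d.\ null by the preceding remark. Since $P^*\le P$ setwise and $f\ge0$, we have $P^*\otimes P^* \le P\otimes P$ as measures. To see this, note that the inequality holds on rectangles and extends by monotone class, or more directly through the iterated integral $\int\!\int f\,\d P^*\,\d P^* \le \int\!\int f\,\d P\,\d P$ applied one coordinate at a time, using nonnegativity of the inner integral. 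Hence $\Exp_{P^*\otimes P^*} f \le \Exp_{P\otimes P} f \le 1$, because $P\otimes P\in\cP^2_\iids$.

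\textbf{E-value property of the likelihood ratio.} We must show $\frac{\d(Q\otimes Q)}{\d(P^*\otimes P^*)}\in(\cP^2_\iids)^\circ$. As in the earlier propositions, this ratio factorizes as $\frac{\d Q}{\d P^*}(X)\cdot\frac{\d Q}{\d P^*}(Y)$. The factor $\frac{\d Q}{\d P^*}$ is a $\cP$-e-value (the numeraire), so under any $P\otimes P$ with $P\in\cP$, independence gives expectation $(\Exp_P \frac{\d Q}{\d P^*})^2\le 1$. Invoking \cref{prop:fundamental-numeraire} then concludes the proof.

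The only step requiring care is the effective-null membership. We expect it to be mild: the measure-domination step is routine via Tonelli, and no $\sigma$-finiteness issues arise because all measures involved are finite. An $n$-fold version, presumably \cref{prop:ripr-bounded-nobs}, would follow by the identical argument, using $(P^*)^n\le P^n$ and the product of $n$ independent e-values.
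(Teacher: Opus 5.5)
Your proposal is correct and follows the paper's proof. In both, the key step is that $P^*\otimes P^* \le P\otimes P$ places $P^*\otimes P^*$ in $(\cP^2_\iids)_\eff$; equivalence to $Q\otimes Q$ and the e-value property of the factorized likelihood ratio carry over from the preceding proposition, which the paper leaves implicit and you spell out.
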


\begin{proof}
    The only thing that needs showing is that $P^* \otimes P^*$ belongs to the effective null $(\cP^2_\iids)_\eff$. To see that, $P^* \otimes P^* \le P \otimes P$, hence whenever $f$ is an e-value for $\cP^2_\iids$, it is an e-value for $P \otimes P$, therefore for $P^* \otimes P^*$.
\end{proof}

We remark that the $P \in \cP$ that upper bounds $P^*$ may not be equivalent to $Q$, and so this does not violate the maximality of $P^*$ demonstrated by \citet[Theorem 4.1]{numeraire}.

Finally, it is straightforward to extend the above to an arbitrary number of i.i.d.\ observations, and thus the optimal e-process.
\begin{theorem}\label{prop:ripr-bounded-nobs}
    Suppose $P^* \in \cP_\eff$ the RIPr of $Q$ onto $\cP_\eff$ satisfies $P^* \le P$ for some $P \in \cP$. Then:
    
    \begin{enumerate}
        \item  ${P^*}^{n}$ is the RIPr of $Q^{n}$ onto $\cP^n_\iids$.
        \item The process
    \begin{equation}
        M^*_n = \prod_{k=1}^n  \frac{\d Q}{\d P^*}(X_k)
    \end{equation}
    is a nonnegative supermartingale under $\cP^{\infty}_\iids$, and it is the log-optimal e-value, e-process, and nonnegative supermartingale for $\cP^{n}_\iids$ at any fixed sample size $n$ under $Q$:
        \begin{equation}
            \Expw_{X_1,\dots, X_n \iid Q} \log \frac{ g_n }{ M^*_n } \le 0
        \end{equation}
        for any $n$-observation e-value (hence any e-process, hence any nonnegative supermartingale) $g_n$ for $\cP^{n}_\iids$.
    \end{enumerate}
   More generally, if the numeraires for $\cP$ w.r.t.\ $Q$, for $\cP^2_\iids$ w.r.t.\ $Q^2$, for $\cP^3_\iids$ w.r.t.\ $Q^3$ ... form an e-process (resp.\ nonnegative supermartingale) for $\cP^\infty_\iids$, it is the log-optimal e-process (resp.\ e-process and nonnegative supermartingale) for $\cP^\infty_\iids$ under $Q^\infty$.

   Further:
   \begin{enumerate}
       \item[3.] At any finite stopping time $\tau$ on the canonical filtration $\sigma(X_1,\dots, X_n)$, $M_\tau^*$ is the numeraire and log-optimal stopped e-process:
        \begin{equation}\label{eqn:log-optimal-stop}
           \Expw_{\{X_n\}\iid Q} \frac{ g_\tau }{ M^*_\tau } \le 1 \quad \text{and}\quad  \Expw_{\{X_n\}\iid Q} \log \frac{ g_\tau }{ M^*_\tau } \le 0
        \end{equation}
        for any e-process $\{ g_n \}$ for $\cP^{n}_\iids$.
   \end{enumerate}
\end{theorem}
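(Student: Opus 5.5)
Part 1 follows by induction on $n$ from the argument of \cref{prop:ripr-bounded-2obs}, using \cref{prop:fundamental-numeraire}. By induction we know $P^* \sim Q$, hence ${P^*}^n \sim Q^n$. Since ${P^*}^n \le P^n \in \cP^n_\iids$, every $\cP^n_\iids$-e-value is an e-value for $P^n$ and therefore for ${P^*}^n$. Hence ${P^*}^n \in (\cP^n_\iids)_\eff$.

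It remains to check that $\d Q^n/\d {P^*}^n = \prod_k \frac{\d Q}{\d P^*}(X_k)$ is a $\cP^n_\iids$-e-value. Under any $R^n$ with $R \in \cP$, the factors are independent. Each factor has $R$-mean at most $1$, because $\d Q/\d P^* \in \cP^\circ$ by \cref{prop:fundamental-numeraire}(ii) applied to the one-observation RIPr. So the product has mean at most $1$, and \cref{prop:fundamental-numeraire} identifies ${P^*}^n$ as the RIPr and $M_n^*$ as the numeraire of $Q^n$ for $\cP^n_\iids$.

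For part 2, the supermartingale property under $R^\infty$, $R\in\cP$, is immediate. We have $\Exp_R[M^*_{n}\mid X_{1:n-1}] = M^*_{n-1}\Exp_R \frac{\d Q}{\d P^*}(X_n) \le M^*_{n-1}$, since the factors are i.i.d.\ and each is a $\cP$-e-value. Log-optimality at fixed $n$ is exactly property (i) of \cref{prop:fundamental-numeraire} for the numeraire from part 1. Any e-process or nonnegative supermartingale evaluated at fixed $n$ is an $n$-observation e-value, so it is dominated in the log-optimality sense. The "more generally" clause is the same observation: if the fixed-$n$ numeraires happen to form an e-process (resp.\ supermartingale), they dominate every competitor at each $n$ by (i), so they are log-optimal in that class.

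Part 3 is the main obstacle, since the stopping time is unbounded and $g_\tau$ is only known to be an e-value under the null. I will aim for the first inequality in \eqref{eqn:log-optimal-stop}; the log version then follows by Jensen. The plan is a change of measure. Let $\tilde{\Pr}$ be the (sub)probability under which the $X_k$ are i.i.d.\ $P^*$, so that $\d Q^n/\d {P^*}^n = M_n^*$ on $\sigma(X_{1:n})$. Then on $\{\tau = n\}$, which is $\sigma(X_{1:n})$-measurable,
\[
\Exp_Q\Bigl[\tfrac{g_\tau}{M^*_\tau}\id_{\{\tau=n\}}\Bigr] = \Exp_{{P^*}^n}\bigl[g_n\id_{\{\tau=n\}}\bigr],
\]
using $M_n^*>0$ $Q$-a.s. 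Summing over $n$ gives $\Exp_Q[g_\tau/M^*_\tau] = \sum_n \Exp_{{P^*}^n}[g_n \id_{\{\tau = n\}}]$.

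The difficulty is that $\tau$ need not be finite under $P^*$, so the right-hand side is not obviously an expectation of a stopped e-process. I would instead bound each truncation by passing to the dominating $P\in\cP$. Since ${P^*}^n \le P^n$ on $\sigma(X_{1:n})$ and $g_n\ge0$, the truncated sum satisfies
\[
\sum_{n\le N}\Exp_{{P^*}^n}[g_n\id_{\{\tau=n\}}] \le \sum_{n \le N}\Exp_{P^\infty}[g_n\id_{\{\tau=n\}}] \le \Exp_{P^\infty}[g_{\tau\wedge N}] \le 1.
\]
The middle step drops the event $\{\tau>N\}$ and uses nonnegativity of $g_N$; the last step holds because $\tau\wedge N$ is a bounded stopping time and $g$ is an e-process for $P^\infty\in\cP^\infty_\iids$. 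Letting $N\to\infty$ by monotone convergence, and using $\tau<\infty$ $Q$-a.s.\ so that the events $\{\tau=n\}$ exhaust the $Q$-probability, completes part 3.
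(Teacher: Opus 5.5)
Your proof is correct and follows the paper's route. Parts 1--2 are the $n$-fold version of the two-observation argument: the domination ${P^*}^n\le P^n$ places ${P^*}^n$ in the effective null, and the product of independent $\cP$-e-values is a $\cP^n_\iids$-e-value. Part 3 is a change of measure combined with the domination $P^*\le P$.

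Your Part 3 is in fact slightly more careful than the paper's. The paper invokes the stopped change-of-measure lemma, which tacitly needs $P\ll Q$ and $P(\tau<\infty)=1$. You decompose over $\{\tau=n\}$, change measure exactly to ${P^*}^n$ (legitimate since $P^*\sim Q$), dominate by $P^n$, and bound via the bounded stopping time $\tau\wedge N$, so you need neither assumption.
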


\begin{proof}
    Part 1 is a straightforward extension of \cref{prop:ripr-bounded-2obs} above via simply replacing the 2-fold tensorization with $n$-fold. Part 2 follows directly from Part 1. Part 3 generalizes Theorem 12 in \cite{koolen2022log} (allowing stopping times that are not necessarily bounded) and also Theorem 7.11 in \cite{ramdas2024hypothesis} (extending the simple null to composite), for which we note that
    \begin{align}
        & \Expw_{X_1,\dots, X_T \iid Q} \frac{ g_\tau }{ M^*_\tau } 
      \quad  =   \quad \Expw_{X_1,\dots, X_T \iid Q} g_\tau \prod_{k=1}^\tau \frac{\d P^*}{\d Q}(X_k)
        \\
        \le &   \Expw_{X_1,\dots, X_T \iid Q} g_\tau \prod_{k=1}^\tau \frac{\d P}{\d Q}(X_k)
        \quad  \stackrel{(*)}=  \quad  \Expw_{X_1,\dots, X_T \iid P} g_\tau  \le 1.
    \end{align}
    Here, the change-of-measure step $(*)$ is justified the same way as the equality marked $(\star)$ in Proof of Theorem 12 in \cite{koolen2022log} (see \cref{rmk:stopped-change} below). The inequality \eqref{eqn:log-optimal-stop} then follows from Jensen's inequality.
\end{proof}

\begin{remark}\label[remark]{rmk:stopped-change}
\cite{koolen2022log} use an in-line proof for the ``stopped change-of-measure" lemma in the $(*)$ step above, in the slightly nonstandard setting which deals with bounded but possibly ``randomized'' stopping times. In \cref{sec:stopped-change}, we present a
self-contained proof of this separate lemma in the standard filtered space--stopping time setting.  
\end{remark}

\section{Summary}

In this paper, we propose sequential power-one tests for monotonicity and unimodality (for a fixed mode or unknown modes) on $\mathbb Z$ based on e-processes. We also present a complete one-observation picture of these testing problems,
characterizing the set of {all} e-values (thus the set of {all valid tests}) for discrete monotone and $\theta$-unimodal distributions.  On the estimation side, we provide a finite-length confidence set for the mode based on one observation, and an almost surely eventually correct set-valued mode estimator based on an i.i.d.\ sequence of observations.
We partially extend the results to continuous distributions, resonating with some prior work in the shape-constrained inference literature.

\bibliography{main}


\newpage
\appendix

\section{Omitted proofs}\label[appendix]{sec:pf}

\subsection{Proof of \cref{lem:oracle-power1}}\label[appendix]{sec:pf-orcl}

\begin{proof}
    Consider the log-process
    \begin{align}
        & \log  M_n^{(m)} = \sum_{k=1}^n \log E^{  \hat Q_{k-1} ,m}(X_k) \\
        = & \sum_{k=1}^n \log \left\{ 1 + \lambda_{\hat Q_{k-1},m}  ( - \id_{\{ X_k=m \}} + \id_{\{ X_k = m + 1 \}}) \right\}.
    \end{align}
    The conditional mean and variance of these log-summands are
    \begin{align}
       \mu_k := \Exp[ \log E^{  \hat Q_{k-1} ,m}(X_k) | \cF_{k-1} ] = f_Q(m) \log(1 -  \lambda_{\hat Q_{k-1},m}) +   f_Q(m+1) \log(1 +  \lambda_{\hat Q_{k-1},m} )
    \end{align}
    and
    \begin{align}
       v_k :=  & \Var[ \log E^{  \hat Q_{k-1} ,m}(X_k) | \cF_{k-1} ] 
       =  (1-f_Q(m) - f_Q(m+1)) \mu_k^2 + \\ &  f_Q(m)( \log(1 -  \lambda_{\hat Q_{k-1},m}) - \mu_k  )^2 +  f_Q(m+1) (\log(1 +  \lambda_{\hat Q_{k-1},m} )  - \mu_k  )^2 .
    \end{align}
    By the strong law of large numbers, $\lambda_{\hat Q_{k-1},m} $ converges almost surely to $\lambda_{Q,m}$. Therefore, $\mu_k$ converges almost surely to
    \begin{equation}
      \mu := \Expw_{X \sim Q} \log E^{Q,m}(X);
    \end{equation}
   and $ v_k $ converges almost surely to
    \begin{equation}
        v := \Varw_{X \sim Q} \log E^{Q,m}(X) > 0.
    \end{equation}
Now, by the martingale strong law of large numbers,
    \begin{equation}
        \frac{\log  M_n^{(m)}  - \sum_{k=1}^n  \Exp[ \log E^{  \hat Q_{k-1} ,m}(X_k) | \cF_{k-1} ] }{\sum_{k=1}^n   \Var[ \log E^{  \hat Q_{k-1} ,m}(X_k) | \cF_{k-1} ] } \stackrel{a.s.}{\longrightarrow} 0,
    \end{equation}
    implying that
    \begin{equation}
          \frac{\log  M_n^{(m)} }{n} \stackrel{a.s.}{\longrightarrow} \mu \ge  \frac{(f_Q(m+1) - f_Q(m))^2}{2( f_Q(m)+ f_Q(m+1)  )  } > 0
    \end{equation}
     via \cref{lem:log-power-monotone-evalue}.
\end{proof}

\subsection{Proof of \cref{thm:bipolar-monotone}}\label[appendix]{sec:pf-bipolar-M}
The proof of \cref{thm:bipolar-monotone} is made convenient by the following lemma.

\begin{lemma}\label[lemma]{lem:bipolar-seq}
    If $P \in {^\circ\fM}$, and $\{s_n\}$ is an integer-valued sequence satisfying $ s_n \ge n$ for all $n \in \mathbb Z^{\ge 0}$, then $\sum_{n=0}^\infty f_P(s_n) \le 1$.
\end{lemma}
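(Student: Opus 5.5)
The plan is to encode the sequence $\{s_n\}$ as a single function $E$ and check that $E$ belongs to $\fM$. Then the hypothesis $P \in {^\circ\fM}$ gives the bound directly.

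Define the counting function $E \in \fF(\mathbb Z^{\ge 0})$ by
\begin{equation}
    E(k) = \#\{ n \in \mathbb Z^{\ge 0} : s_n = k \},
\end{equation}
which is the number of times the value $k$ is hit by the sequence. It is finite: $s_n = k$ together with $s_n \ge n$ forces $n \le k$, so $E(k) \le k+1$.

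The first step is to show $E \in \fM$ using \cref{thm:polar-of-M}. Set $\rho_n = \sum_{k=0}^{n-1} E(k)$. Then $\rho_0 = 0$, $\rho$ is non-decreasing since $E \ge 0$, and $E = \Delta\rho$. It remains to check that $\rho_n \le n$, i.e.\ $\sum_{k=0}^{N} E(k) \le N+1$ for every $N$, as in \eqref{eqn:Mcirc-reprise}. This sum counts the indices $n$ with $s_n \le N$. Any such index satisfies $n \le s_n \le N$, so it lies in $\{0,\dots,N\}$, and there are at most $N+1$ of them. Hence $\rho \in \fR$ and $E \in \fM$.

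The second step applies $P \in {^\circ\fM}$, which gives $\sum_k E(k) f_P(k) \le 1$. Regrouping this nonnegative series by the index $n$ (allowed by Tonelli) gives
\begin{equation}
    \sum_k E(k) f_P(k) = \sum_k \sum_{n : s_n = k} f_P(k) = \sum_{n=0}^\infty f_P(s_n) \le 1.
\end{equation}

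The only subtle point is that the $s_n$ need not be distinct or monotone, so $E$ can take values larger than $1$. This is exactly why one uses the multiplicity count rather than an indicator function. The counting bound in the first step handles repetitions automatically, so I do not expect a genuine obstacle.
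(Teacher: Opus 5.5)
Your proposal is correct and follows the paper's proof: the paper defines $\rho_n = |\{k : s_k < n\}|$ directly. This is exactly the partial sum of your multiplicity function $E$. The paper then uses the same containment $\{k : s_k < n\} \subseteq \{0,\dots,n-1\}$ to get $\rho \in \fR$, and pairs $\Delta\rho \in \fM$ with $P \in {^\circ\fM}$ to obtain $\sum_n f_P(s_n) \le 1$.
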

\begin{proof}
    Define the sequence $\{ \rho_n \}$ as $\rho_n = | \{  k : s_k < n \} |$. First, since $\{  k : s_k < n \} \subseteq \{ 0,\dots, n-1 \}$ we see that $\rho_n \le n$. Second, $0 = \rho_0 \le \rho_1 \le \dots$. Third,
    \begin{equation}
        \sum_{n=0}^\infty f_P(n)(\rho_{n+1} - \rho_n) =   \sum_{n=0}^\infty f_P(n)| \{  k : s_k=n \} | = \sum_{n=0}^\infty f_P(s_n).
    \end{equation}
    Finally, noting that $P \in {^\circ\fM}$ and $n \mapsto \rho_{n+1} - \rho_n \in \fM$, we have the above $\le 1$.
\end{proof}

\begin{proof}[Proof of \cref{thm:bipolar-monotone}]
    Let $P \in {^\circ\fM}$. First, since the constant function $E(n) = 1$ belongs to $\fM$, we see that $\sum f_P(n) = \sum f_P(n) E(n) \le 1$, i.e., $P$ is a subprobability. Therefore, for any $n$, the supremum $\sup_{k \ge n} f_P(k)$ is in fact a maximum, and $\argmax_{k \ge n} f_P(k)$ always exists. 
    
    For each $n$, we let $s_n = \argmax_{k \ge n} f_P(k)$, tie broken arbitrarily.
    Then, since $s_n \ge n$, by \cref{lem:bipolar-seq} we have
    \begin{equation}
       1 \ge  \sum_{n=0}^\infty f_P(s_n) = \sum_{n=0}^\infty \left( \sup_{k \ge n} f_P(k) \right).
    \end{equation}
    Therefore, $P$ is upper bounded by the monotone decreasing subprobability $P'$ defined as $f_{P'}(n) = \sup_{k \ge n} f_P(k)$, concluding that $P \in \cM^-$ and consequently that ${^\circ\fM} \subseteq \cM^-$. The inclusion $\cM^- \subseteq {^\circ\fM}$ is straightforward.
\end{proof}

\subsection{Proof of \cref{prop:xqx-bipolar}}\label[appendix]{sec:pf-xqx-bipolar}
\begin{proof}
    To see $\cM' \subseteq {^\circ\fQ}$, suppose $P \in \cM'$. For any $Q \in \cP(\mathbb Z^{\ge 0})$,
\begin{equation}\label{eqn:step1}
      \int (k+1) f_Q(k) P(\d k)  = \sum_{k=0}^\infty (k+1) f_P (k)  f_Q (k) \le  \sum_{k=0}^\infty  f_Q (k) \le 1,
\end{equation}
concluding that $\cM' \subseteq {^\circ \fQ}$. 

To see ${^\circ\fQ} \subseteq \cM'$,  suppose $P \in {^\circ \fQ}$. Then, for any $n \in \mathbb Z^{ \ge 0}$, we take $Q$ to be the point mass at $n$. Since $P \in {^\circ \fQ}$ and $(n+1)f_Q(k)  \in \fQ$,
    \begin{equation}
        1 \ge   \int (k+1) f_Q(k) P(\d k) = f_P(n)(n+1),
    \end{equation}
    implying that $P \in \cM'$. This implies that ${^\circ \fQ} \subseteq \cM'$,  and also $\fQ \subseteq ({^\circ \fQ})^\circ = \cM'^\circ$. 
    
    Finally, let us show $\cM'^\circ \subseteq \fQ$. Suppose $E \in \cM'^\circ$. Consider the ``maximal'' measure $P \in \cM'$ with $f_P(n) = 1/(n+1)$. Since $E \in \cM'^\circ$,
      \begin{equation}
        1 \ge   \int E(k) P(\d k) = \sum_{k=0}^\infty E(k)/(k+1),
    \end{equation}
    concluding that $E(n)/(n+1)$ is the measure mass function of some subprobability. Therefore, $E \in \fQ$.
\end{proof}

\subsection{Proof of \cref{thm:polar-unimodal-Z}}\label[appendix]{sec:pf-polar-Z}
Recall that the function class we wanted to show to equal $\fD_0$ (taking $\theta = 0$ without loss of generality) is
\begin{equation}\label{eqn:D0}
        \fD_{0}' = \left\{ n \mapsto \begin{cases}
            \rho_{n + 1} -\rho_{n}, & n > 0 \\
            \rho_1 + \eta_1 - 1, &  n = 0 \\
            \eta_{ - n + 1} - \eta_{ - n}, & n < 0
        \end{cases} \;  : \;  
        \begin{matrix}
\rho_1 + \eta_1 \ge 1; \ \rho_1 \le \dots \le  \rho_n \le n \\ \text{and }
 \eta_1 \le \dots \le  \eta_n \le n
       \text{ for all } n 
\end{matrix}
 \right\}.
    \end{equation}
Let us first show that there is an equivalent way to parametrize this function class.
\begin{lemma}\label[lemma]{lem:shifting-sequences}
    Let
    \begin{equation}\label{eqn:D0prime}
        \fD_{0}'' = \left\{ n \mapsto \begin{cases}
            \rho_{n + 1} -\rho_{n}, & n > 0 \\
            \rho_1 + \eta_1 - 1, &  n = 0 \\
            \eta_{ - n + 1} - \eta_{ - n}, & n < 0
        \end{cases} \;  : \;  
        \begin{matrix}
\rho_1 = \eta_1 \in [1/2, 1]; \ \rho_1 \le \dots \le  \rho_n  \\ \text{and }
 \eta_1 \le \dots \le  \eta_n
       \text{ for all } n; \\
       \rho_n + \eta_m \le n +m \text{ for all } n,m
\end{matrix}
 \right\}.
    \end{equation}
Then $\fD_{0}' = \fD_{0}''$.
\end{lemma}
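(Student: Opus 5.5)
The plan rests on one observation: the function in \eqref{eqn:D0} and \eqref{eqn:D0prime} depends on the pair of sequences $(\{\rho_n\}_{n\ge1},\{\eta_n\}_{n\ge1})$ only through the increments $\rho_{n+1}-\rho_n$, $\eta_{n+1}-\eta_n$ and the sum $\rho_1+\eta_1$. It is therefore invariant under the \emph{shift} $\rho_n\mapsto\rho_n+c$, $\eta_n\mapsto\eta_n-c$ for every $n$, with $c\in\mathbb R$ a constant. Such a shift preserves the monotonicity of both sequences and every pairwise sum $\rho_n+\eta_m$. The two parametrizations differ only in which constraints are imposed, and I will show that each constraint set can be shifted into the other. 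I take both sequences to be indexed by $n\ge1$, as in \eqref{eqn:D0}.

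For $\fD_0'\subseteq\fD_0''$, take $(\rho,\eta)$ satisfying the constraints of \eqref{eqn:D0} and shift by $c=(\eta_1-\rho_1)/2$. The new first terms are $\rho_1'=\eta_1'=(\rho_1+\eta_1)/2=:s$.
\begin{itemize}
\item $s\ge 1/2$ because $\rho_1+\eta_1\ge1$.
\item $s\le1$ because $\rho_1\le1$ and $\eta_1\le1$.
\item Monotonicity is preserved by the shift.
\item $\rho'_n+\eta'_m=\rho_n+\eta_m\le n+m$, using the individual bounds $\rho_n\le n$ and $\eta_m\le m$.
\end{itemize}
The shifted pair therefore satisfies the constraints of \eqref{eqn:D0prime} and produces the same function.

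For $\fD_0''\subseteq\fD_0'$, take $(\rho,\eta)$ satisfying the constraints of \eqref{eqn:D0prime}. I look for a shift $c$ with $\rho_n+c\le n$ for all $n$ and $\eta_m-c\le m$ for all $m$, which is equivalent to
\begin{equation}
\sup_{m\ge1}(\eta_m-m)\;\le\;c\;\le\;\inf_{n\ge1}(n-\rho_n).
\end{equation}
Such a $c$ exists precisely when $\eta_m-m\le n-\rho_n$ for all $n,m$, i.e.\ $\rho_n+\eta_m\le n+m$, which is the coupling constraint in \eqref{eqn:D0prime}. Both extremes are finite: the supremum is at least $\eta_1-1$, and the infimum is at most $1-\rho_1$. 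After the shift, monotonicity is preserved, the individual bounds $\rho'_n\le n$ and $\eta'_n\le n$ hold by the choice of $c$, and $\rho_1'+\eta_1'=\rho_1+\eta_1=2s\ge1$. Hence the same function lies in $\fD_0'$.

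The only nonroutine step is the second inclusion. There, the coupled constraint $\rho_n+\eta_m\le n+m$ must be split into the separate bounds $\rho_n\le n$ and $\eta_m\le m$, and this is exactly the existence of a separating constant between $\sup_m(\eta_m-m)$ and $\inf_n(n-\rho_n)$. Once the shift invariance is noted, the rest is bookkeeping.
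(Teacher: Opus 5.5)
Your proof is correct and follows the paper's argument: the first inclusion uses exactly the paper's symmetrizing shift by $(\eta_1-\rho_1)/2$, and the second splits the coupled constraint $\rho_n+\eta_m\le n+m$ into the individual bounds via a constant shift. The paper instead does a case analysis on $\varepsilon=\max\{\sup_n(\rho_n-n),\sup_n(\eta_n-n)\}$, which amounts to picking a particular point of the interval $[\sup_m(\eta_m-m),\inf_n(n-\rho_n)]$ (an endpoint when $\varepsilon>0$, and $c=0$ otherwise), whereas your choice of an arbitrary $c$ in that interval avoids the WLOG step and the auxiliary bound $\varepsilon\le 1/2$.
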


\begin{proof} Given two sequences $( \rho_n ), (\eta_n )$, we say $E(n)= \begin{cases}
            \rho_{n + 1} -\rho_{n}, & n > 0 \\
            \rho_1 + \eta_1 - 1, &  n = 0 \\
            \eta_{ - n + 1} - \eta_{ - n}, & n < 0
        \end{cases}$ is the function they generate.

    To see $\fD_{0}' \subseteq \fD_{0}''$, if a function is represented as \eqref{eqn:D0}, define
    \begin{equation}
        \rho_{n}' = \rho_n + \frac{\eta_1 - \rho_1}{2}, \quad \eta_{n}' = \eta_n + \frac{\rho_1 - \eta_1}{2}
    \end{equation}
    and the sequences $( \rho_n' ), ( \eta_n' )$ generate the same function satisfying \eqref{eqn:D0prime}.

    To see $\fD_{0}'' \subseteq \fD_{0}'$, if a function is represented as \eqref{eqn:D0prime},  consider $\varepsilon = \max\{\sup_n(\rho_n - n), \sup_n(\eta_n - n) \}$. If $\varepsilon \le 0$, then the same sequences satisfy the conditions in \eqref{eqn:D0} and generate the same function. If $\varepsilon > 0$, without loss of generality let us assume $\sup_n(\rho_n - n) = \varepsilon$. First,
    note that $\rho_n + 1/2 \le  \rho_n + \eta_1 \le n + 1$ so
    $\varepsilon \le 1/2$. Second, since $\rho_n + \eta_m \le n+m$, we see that $\sup_m(\eta_m - m) \le \inf_n(n - \rho_n) = -\varepsilon$. We can define $\eta_n' = \eta_n + \varepsilon$ and $\rho_n' = \rho_n - \varepsilon$. Then, $0 \le \rho_n' \le n$, $0 \le \eta_n' \le n$ and the sequences $( \rho_n' ), ( \eta_n' )$ generate the same function satisfying \eqref{eqn:D0}.
\end{proof}

\begin{proof}[Proof of \cref{thm:polar-unimodal-Z}] Since the problem is translation invariant, it suffices to consider the case $\theta = 0$. First, suppose $E \in \fD_0'$ with 
\begin{equation}
    E(n) =  \begin{cases}
            \rho_{n + 1} -\rho_{n}, & n > 0 \\
            \rho_1 + \eta_1 - 1, &  n = 0 \\
            \eta_{ -n + 1} - \eta_{ -n}, & n < 0
        \end{cases}
\end{equation}
and consider any $P \in \cD_0$.
\begin{align}
   & \sum_{n= -N+1}^{N-1} E(n) f_P(n) =  \sum_{n=1}^{N-1} E(n) f_P(n) +  \sum_{n=1}^{N-1} E(-n) f_P(-n) + E(0)f_P(0)
   \\
   & \text{(in analogy to Proof of \cref{thm:polar-of-M})}
   \\
   \le & \sum_{n=1}^{N-1} f_P(n) + (\rho_N - N)f_P(N) - (\rho_1 - 1)f_P(1) +
   \\  &   \sum_{n=1}^{N-1} f_P(-n) + (\eta_N - N)f_P(-N) - (\eta_1 -  1)f_P(-1) +  
   \\  &   (\rho_1 + \eta_1 - 1)f_P(0) 
   \\
   \le & \sum_{n=-N+1}^N f_P(n)  -(\rho_1 - 1)f_P(1) - (\eta_1 - 1)f_P(-1) + (\rho_1 + \eta_1 - 2) f_P(0) \\
   \le & 1 - (\rho_1 - 1)f_P(0) - (\eta_1 - 1)f_P(0) + (\rho_1 + \eta_1 - 2) f_P(0)  = 1.
\end{align}
So $\Expw_{X \sim P}(E(X)) = \lim_{N \to \infty}\sum_{n= -N+1}^{N-1} E(n) f_P(n)  \le 1$. Therefore, $E \in \cD_0^\circ$. This shows that $\fD_0' \subseteq \cD_0^\circ = \fD$.

Second, suppose $E \in \fD_0 = \cD_0^\circ$. Define the increasing sequences
    \begin{equation}
        \rho_{n} = \frac{1+E(0)}{2} + \sum_{k=1}^{n-1} E(k), \quad \eta_n = \frac{1+E(0)}{2} + \sum_{k=1}^{n-1} E(-k) 
    \end{equation}
    which satisfy,
\begin{equation}
    E(n) =  \begin{cases}
            \rho_{n + 1} -\rho_{n}, & n > 0 \\
            \rho_1 + \eta_1 - 1, &  n = 0 \\
            \eta_{ -n + 1} - \eta_{ -n}, & n < 0
        \end{cases}
\end{equation}
    Now, for any $n,m \in \mathbb Z^{\ge 1}$, consider $\operatorname{unif}_{\{ -(m-1),\dots, n-1 \}} \in \cD_0$. We see that $\rho_n + \eta_m \le n + m$. Also, $1/2 \le \rho_1 = \eta_1 \le 1$. By \cref{lem:shifting-sequences}, we see that $E\in \fD_0'' = \fD_0'$. Therefore $\fD_0 = \cD_0^\circ \subseteq \fD_0'$. 
\end{proof}

\subsection{Proof of \cref{thm:bipolar-unimodal}}\label[appendix]{sec:pf-bipolar-D}

\begin{lemma}\label[lemma]{lem:bipolar-seq-Z}
    If $P \in {^\circ\fD_0}$, $(s_n)_{n \ge 1}$, $(t_n)_{n \ge 1}$ are integer-valued sequences satisfying $s_n \ge n$, $t_n \le -n$ for all $n \in \mathbb Z^{\ge 1}$, and arbitrary $u \in \mathbb Z$, then $f_P(u)+ \sum_{n=1}^\infty  (f_P(s_n) + f_P(t_n)) \le 1$.
\end{lemma}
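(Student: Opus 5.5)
The plan is to mirror the proof of \cref{lem:bipolar-seq}. I will encode the sequences $(s_n)$, $(t_n)$ and the extra point $u$ as a single counting function
\begin{equation}
E(x) = \id_{\{x=u\}} + \sum_{k\ge 1}\id_{\{x=s_k\}} + \sum_{k\ge1}\id_{\{x=t_k\}},
\end{equation}
then show $E \in \fD_0$ via the representation of \cref{thm:polar-unimodal-Z}, and conclude using $P \in {^\circ\fD_0}$. Indeed, once $E\in\fD_0$, we get
\begin{equation}
1 \ge \sum_{x\in\mathbb Z} E(x) f_P(x) = f_P(u) + \sum_{n\ge1}\bigl(f_P(s_n)+f_P(t_n)\bigr).
\end{equation}
The sum is well defined and finite termwise, since each $x$ is hit by only finitely many $s_k$ (because $s_k\ge k$) and by only finitely many $t_k$.

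The key step is to exhibit sequences $(\rho_n)_{n\ge1}$ and $(\eta_n)_{n\ge1}$ that generate $E$ and satisfy the constraints in \eqref{eqn:set-Dtheta} with $\theta=0$. Define the counts
\begin{equation}
A_n = |\{k\ge1 : s_k < n\}|, \qquad B_n = |\{k\ge1 : t_k > -n\}|.
\end{equation}
Since $s_k<n$ forces $k\le s_k\le n-1$, we have $A_n \le n-1$, and symmetrically $B_n\le n-1$. Both counts are nondecreasing in $n$, and $A_1=B_1=0$. Their increments are $A_{n+1}-A_n = |\{k: s_k=n\}|$ and $B_{n+1}-B_n=|\{k:t_k=-n\}|$. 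This matches $E$ on $\mathbb Z\setminus\{0\}$, apart from the contribution of $u$.

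The only delicate point is absorbing the extra mass at $u$ without violating $\rho_n\le n$. I will split into cases on the sign of $u$.
\begin{itemize}
\item If $u=0$, take $\rho_n = 1+A_n$ and $\eta_n = 1+B_n$. Then $\rho_n,\eta_n\le n$, the sequences are nondecreasing, $\rho_1+\eta_1=2\ge1$, and $E(0)=\rho_1+\eta_1-1=1$, as required.
\item If $u>0$, the naive choice $1+A_n+\id_{\{u<n\}}$ can reach $n+1$. Instead, I shift the slack to the other side: take $\rho_n = A_n+\id_{\{u<n\}}$ and $\eta_n = 1+B_n$. Then $\rho_n\le (n-1)+1=n$ and $\eta_n\le n$. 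Also $\rho_1=0$ and $\eta_1=1$, so $\rho_1+\eta_1 = 1\ge 1$, and $E(0)=\rho_1+\eta_1-1=0$, which is correct since no $s_k$ or $t_k$ equals $0$ and $u\neq0$. The increments of $\rho$ now also count $u$ at position $n=u$.
\item The case $u<0$ is symmetric, swapping the roles of $\rho$ and $\eta$.
\end{itemize}

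With these choices, $E$ lies in the set \eqref{eqn:set-Dtheta}, which equals $\fD_0$ by \cref{thm:polar-unimodal-Z}, and the displayed inequality finishes the proof. I expect this $u$-case bookkeeping (the choice $\rho_1=0$, $\eta_1=1$) to be the only real obstacle. The rest is a routine transcription of \cref{lem:bipolar-seq}.
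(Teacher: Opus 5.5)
Your proposal is correct and is essentially the paper's proof. The paper sets $s_0=t_0=u$ and defines $\rho_n=|\{k\ge 0: s_k<n\}|$ and $\eta_n=|\{k\ge 0: t_k>-n\}|$. In each case this gives exactly your sequences, with $\rho_1=\id_{\{u\le 0\}}$ and $\eta_1=\id_{\{u\ge 0\}}$, so your case split on the sign of $u$ is an unrolled form of that uniform construction.
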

\begin{proof}
We further define $s_0 = t_0 = u$, and then
    define the sequence $(\rho_n )$ as 
    $\rho_n = | \{  k \in \mathbb Z^{\ge 0} : s_k < n \} |$ and $( \eta_n )$ as $\eta_n = | \{  k \in \mathbb Z^{\ge 0} : t_k > - n \} |$. Note that $\rho_n \le n$, $\eta_n \le n$, and that $\rho_1 = \id_{\{ u \le 0\} }$, $\eta_1 = \id_{\{ u \ge 0\} }$, and that $( \rho_n )$, $( \eta_n )$ are both increasing with $\rho_1 + \eta_1 \ge 1$. Now
    note that, since $P \in {^\circ\fD_0}$ and these two sequences $( \rho_n )$, $(\eta_n)$ generate a function in $\fD_0$:
    \begin{align}
       & 1 \ge  f_P(0) (\rho_1 + \eta_1 - 1) + 
       \sum_{n=1}^\infty f_P(n) (\rho_{n+1} - \rho_n) +  \sum_{n=1}^\infty f_P(-n) (\eta_{n+1} - \eta_n)
       \\ 
       = & f_P(0) (\rho_1 + \eta_1 - 1) + \sum_{n=1}^\infty f_P(n) |\{ k: s_k = n \}| +  \sum_{n=1}^\infty f_P(-n) |\{ k: t_k = -n \}|
       \\
       = & f_P(0) (\rho_1 + \eta_1 - 1) +  \left(\sum_{n=1}^\infty f_P(s_n) + \id_{\{ u > 0 \}} f_P(u) \right)+  \left(\sum_{n=1}^\infty f_P(t_n)  + \id_{\{ u < 0 \}} f_P(u) \right)
       \\
       = & f_P(0) (\id_{\{ u \ge 0 \}} +\id_{\{ u \le 0 \}} - 1) +   \id_{\{ u \neq 0 \}} f_P(u)+\sum_{n=1}^\infty (f_P(s_n) +  f_P(t_n))
       \\
       = & f_P(0) \id_{\{ u = 0 \}} +   \id_{\{ u \neq 0 \}} f_P(u)+\sum_{n=1}^\infty (f_P(s_n) +  f_P(t_n))
        \\
       = & \id_{\{ u = 0 \}} f_P(u)  +   \id_{\{ u \neq 0 \}} f_P(u)+\sum_{n=1}^\infty (f_P(s_n) +  f_P(t_n))
       \\
        = & f_P(u)+\sum_{n=1}^\infty (f_P(s_n) +  f_P(t_n)),
    \end{align}
    concluding the proof.
\end{proof}

\begin{proof}[Proof of \cref{thm:bipolar-unimodal}]  Again it suffices to consider $\theta=0$. Let $P \in {^\circ\fD_{0}}$. By considering the constant function $E(n) = 1 \in \fD_0$ we again see that $P$ must be a subprobability. Therefore, for any $n \ge 1$ the suprema
\begin{equation}
    \sup_{k \ge n} f_P(k), \quad \sup_{k \le -n} f_P(k), \quad  \sup_{k \in \mathbb Z} f_P(k)
\end{equation}
are all maxima. So for each $n \ge 1$ we can pick an $s_n \ge n$ that satisfies
    \begin{equation}
        f_P(s_n) = \sup_{k \ge n} f_P(k)
    \end{equation}
    and a $t_n \le -n$ that satisfies
    \begin{equation}
          f_P(-t_n) =  \sup_{k \le -n} f_P(k);
    \end{equation}
    We can also pick $u \in \mathbb Z$ that satisfies
    \begin{equation}
        f_P(u) = \sup_{k \in \mathbb Z} f_P(k).
    \end{equation}
    Therefore, if we define the measure $P'$ as
    \begin{equation}
        f_{P'}(n) = \begin{cases}
            \sup_{k \ge n} f_{P}(k) & n \ge 1 \\
            \sup_{k \in \mathbb Z} f_{P}(k) & n \ge 0 \\
             \sup_{k \le n} f_{P}(k) & n \le -1 
        \end{cases}
    \end{equation}
    we have
    \begin{equation}
        \sum_{n \in \mathbb Z}  f_{P'}(n) \le f_P(u)+ \sum_{n=1}^\infty(f_P(s_n) + f_P(t_n)) \stackrel{(*)}\le 1 
    \end{equation}
    for any $\varepsilon$, where the inequality $(*)$ is due to \cref{lem:bipolar-seq-Z}. Therefore, $P'$ is a subprobability. Since $P'$ upper bounds $P$ and is $0$-unimodal, we see that $P \in \cD_{0}^-$.
\end{proof}

\subsection{Proof of \cref{cor:1obs-CI}}\label[appendix]{sec:pfci}

\begin{proof}
   Let us first prove $  \CI(X;\alpha, 0)$ is a valid $(1-\alpha)$-CI for $\theta$.
    For each $\theta > T_\alpha - 1$, let $q_\theta$ be the PMF of the uniform distribution on
    \begin{equation}
      I_\theta :=  \left\{ \left[ -\frac{\theta}{T_\alpha-1} , 0\right) \cup \left(0, \frac{\theta}{T_\alpha+1} \right] \right\} \cap \mathbb Z.
    \end{equation}
    It is clear that $q_\theta$'s support $I_\theta$ is a subset of $\mathbb Z^{\le \theta}$. Further, the cardinality of $I_\theta$ satisfies
    \begin{equation}
      0 <  |I_\theta| \le \frac{\theta}{T_\alpha-1} + \frac{\theta}{T_\alpha+1}  = \frac{ 2T_\alpha \theta }{T_\alpha^2-1}.
    \end{equation}
    Therefore, for each $n \in I_\theta$, the realized e-value $E(n;\theta)$ satisfies
    \begin{equation}
        E(n;\theta) = |  (\theta - n) + 1 | \cdot | I_\theta|^{-1} > \frac{T_\alpha \theta}{T_\alpha + 1} \cdot \frac{T_\alpha^2-1}{ 2T_\alpha \theta } = 1/\alpha.
    \end{equation}
    Similarly, for each $\theta < - (T_\alpha - 1)$, let $q_\theta$ be the PMF of the uniform distribution on
     \begin{equation}
      I_\theta :=  \left\{ \left[  \frac{\theta}{T_\alpha+1} , 0\right) \cup \left(0, -\frac{\theta}{T_\alpha-1} \right] \right\} \cap \mathbb Z.
    \end{equation}
    Then for each $n \in I_\theta$ we also have $ E(n;\theta)  > 1/\alpha$. Finally, for each $|\theta| \le T_\alpha - 1$ we simply take $q_\theta$ to be the point mass on $\theta$ (in this case for all $n$, $E(n;\theta) \le 1$).

    From above, we see that the confidence set $ \CI =   \{ \theta \in \mathbb Z :    E(X;\theta) < 1/\alpha  \}$ from \cref{cor:test-Z} reads
    \begin{equation}
        \CI = \{ \theta: X \notin I_{\theta} \}  = \begin{cases}
              ( X - T_\alpha |X|, X + T_\alpha |X| ) \cap \mathbb Z , & X \neq 0; \\
              \mathbb Z , & X = 0.
        \end{cases}
    \end{equation}
    This is exactly $\CI(X; \alpha, 0)$. For $\phi \neq 0$, the fact that $\CI(X; \alpha, \phi)$ is still a valid CI can be seen by a simple shifting argument, and the final argument is a simple union bound.
\end{proof}

\subsection{Proof and an application of \cref{thm:cont}}\label[appendix]{sec:pf-cont}
\begin{proof} 
For any $a, w > 0$, since $L_{a,a+w}(X)$ is an e-value for $\cV$ due to \cref{lem:bump}, so is
\begin{equation}
  E(X;w) =  \int_{0}^\infty L_{a, a + w}(x) q(a) \d a
\end{equation}
due to Fubini's theorem. Note that
\begin{align} 
    & E(X;w)
    \\
    = & \int_{0}^\infty  \frac{a+w}{w} \id_{\{  a < X \le a + w \}} q(a) \d a
    \\
    = &  \frac{1}{w}  \int_{(X-w)^+}^{X^+} a q(a) \d a +  \int_{(X-w)^+}^{X^+}   q(a) \d a.
\end{align}
Consider the e-values $E(X; 1/n)$ for $n=1,2,\dots$. First,
\begin{equation}
    \lim_{n\to\infty}  E(X; 1/n) =  X^+ q(X^+)
\end{equation}
due to the left continuity of $q$ at $X^+$.
Second, suppose $\sup_{x \ge 0} (x+1) q(x) = M$. We have,
\begin{equation}
    |E(X;1/n)| \le  \max_{x \in [(X-w)^+, X^+]} (xq(x) + q(x)) \le M.
\end{equation}
The dominated convergence theorem then shows that the limit $ \lim_{n\to\infty}  E(X; 1/n) =  X^+ q(X^+)$ is an e-value for $\cV$ as well.
\end{proof}

An application of \cref{thm:cont} is the following confidence interval for the mode $\theta$. First, we establish the following straightforward analog to \cref{cor:test-Z}.

\begin{proposition}\label[proposition]{cor:test-R}
    Let $\{q_\theta : \theta \in \mathbb R \}$ be a family of continuous PDFs on $\mathbb R$ such that each $q_\theta$ is supported either on $\mathbb R^{\ge \theta}$ or $\mathbb R^{\le \theta}$ with bounded $(|x-\theta| + 1)q(x)$.
    Let \begin{gather}
        E(X;\theta) = |X-\theta| \cdot q_\theta(X),
    \end{gather}
    Then each $ E(X;\theta)$ is an e-value for $\cU_{\theta}$. 
    Consequently,
    \begin{equation}\label{eqn:1obs-ci}
     \CI =   \{ \theta \in \mathbb Z :    E(X;\theta) < 1/\alpha  \}
    \end{equation}
    is a $(1-\alpha)$-confidence interval for $\theta$.
\end{proposition}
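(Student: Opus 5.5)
The plan is to reduce to \cref{thm:cont} by translation and reflection, in the same way \cref{cor:test-Z} follows from \cref{thm:discr}. Fix $\theta$ and suppose first that $q_\theta$ is supported on $\mathbb R^{\ge\theta}$. Define the shifted density $q(y) = q_\theta(y+\theta)$ on $[0,\infty)$. It is a continuous PDF there, hence left-continuous, and $(y+1)q(y)$ is bounded by hypothesis. By \cref{thm:cont}, $Y q(Y)$ is an e-value for $\cV \supseteq \cU_0^+$.

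Now take any $P \in \cU_\theta$ and $X \sim P$. On the event $\{X \le \theta\}$ we have $E(X;\theta) = |X-\theta|\,q_\theta(X) = 0$, except at the single point $X=\theta$, where the factor $|X-\theta| = 0$ also gives zero. Hence
\begin{equation}
\Exp_P E(X;\theta) = \Exp_P\big[(X-\theta) q((X-\theta))\,\id_{\{X>\theta\}}\big].
\end{equation}

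The key step is to compare the restriction of $P$ to $(\theta,\infty)$ with a member of $\cU_0^+$. Since $F_P$ is concave on $[\theta,\infty)$, consider the law $P^+$ of $(X-\theta)^+$. It places mass $F_P(\theta)$ at $0$, and its CDF $y\mapsto F_P(\theta+y)$ is concave on $[0,\infty)$. So $P^+ \in \cU_0^+$, with a possible jump at $0$, which the definition allows. The function $y\mapsto y q(y)$ vanishes at $y=0$, so
\begin{equation}
\Exp_P E(X;\theta) = \Exp_{Y\sim P^+} Y q(Y) \le 1
\end{equation}
by \cref{thm:cont}. For $q_\theta$ supported on $\mathbb R^{\le\theta}$, apply the same argument to the reflection $X\mapsto 2\theta - X$, using convexity of $F_P$ on $(-\infty,\theta]$; the law of $(\theta-X)^+$ then has a concave CDF on $[0,\infty)$.

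The main obstacle I expect is this reflection step. Convexity of the CDF on $(-\infty,\theta]$ has to transfer to concavity of the CDF of $\theta - X$, and a possible atom at $\theta$ or a left-limit issue has to be handled. I would work with the measure directly, via $P((a,b])$ inequalities in the form of \cref{lem:basic} for membership in $\cV$, instead of with CDFs. Any atom at $\theta$ is harmless because the e-value vanishes there.

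The confidence statement is then the usual duality. For $P \in \cU_\theta$, Markov's inequality gives $P(E(X;\theta)\ge 1/\alpha)\le\alpha$, so $P(\theta\in\CI)\ge 1-\alpha$. This holds for every $\theta$ with $P\in\cU_\theta$, which makes $\CI$ a weak $(1-\alpha)$-confidence set for the mode. The statement's ``$\theta\in\mathbb Z$'' should read $\theta\in\mathbb R$, and the argument is unchanged if the index set is restricted.
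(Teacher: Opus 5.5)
Your proposal is correct and follows the route the paper intends: the paper gives no separate proof and calls the result a straightforward analog of \cref{cor:test-Z}, meaning the translation/reflection reduction to \cref{thm:cont}. In that reduction the law of $(X-\theta)^+$ (or of $(\theta-X)^+$) lies in $\cU_0^+$, and $y\,q(y)$ vanishes at $0$. The reflection step you flag is harmless. For $z>0$ the CDF of $(\theta-X)^+$ equals $1-F_P(\theta-z)$, because the convex $F_P$ is continuous on $(-\infty,\theta)$. At $z=0$ it is right-continuous with value $1-F_P(\theta^-)$, so it is concave on $[0,\infty)$.
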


By choosing $q_\theta$ to be the continuous uniform distribution over a vicinity of 0, we generalize \cref{cor:1obs-CI}.

\begin{corollary}\label[corollary]{cor:1obs-CI-R}
     Define $T_\alpha = 2/\alpha + 1$. For any $\phi \in \mathbb R$,  the set
     \begin{equation}
              ( X - T_\alpha |X - \phi|, X + T_\alpha |X -  \phi| )
    \end{equation}
     is a $(1-\alpha)$-confidence interval for the mode $\theta$. 
\end{corollary}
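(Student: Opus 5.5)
We prove the statement by instantiating \cref{cor:test-R}, mirroring the proof of \cref{cor:1obs-CI} in \cref{sec:pfci} with discrete uniforms replaced by continuous ones. By translation invariance (replace $X$ by $X-\phi$ and $\theta$ by $\theta-\phi$) it suffices to take $\phi = 0$, so the target set is $(X - T_\alpha|X|, X + T_\alpha|X|)$.

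\emph{Choice of $q_\theta$.} For $\theta > 0$, let $q_\theta$ be the uniform density on
\begin{equation}
I_\theta = \left[ -\tfrac{\theta}{T_\alpha - 1}, 0 \right) \cup \left( 0, \tfrac{\theta}{T_\alpha+1} \right],
\end{equation}
whose Lebesgue measure is $\frac{2T_\alpha\theta}{T_\alpha^2-1}$.
\begin{itemize}
\item Since $\frac{\theta}{T_\alpha+1} < \theta$, $q_\theta$ is supported on $\mathbb R^{\le \theta}$.
\item $(|x-\theta|+1)q_\theta(x)$ is bounded.
\item For $x\in I_\theta$ we have $|x - \theta| = \theta - x \ge \frac{T_\alpha\theta}{T_\alpha+1}$. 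Hence
\begin{equation}
E(x;\theta) = |x-\theta|\, q_\theta(x) \ge \frac{T_\alpha\theta}{T_\alpha+1}\cdot\frac{T_\alpha^2-1}{2T_\alpha\theta} = \frac{T_\alpha-1}{2} = 1/\alpha .
\end{equation}
\item Off $I_\theta$, $E(x;\theta)=0$.
\end{itemize}
For $\theta<0$ we use the mirror image $I_\theta = [\frac{\theta}{T_\alpha+1},0)\cup(0,-\frac{\theta}{T_\alpha-1}]$. For $\theta = 0$ we take $E(\cdot;0)\equiv 0$, which is trivially an e-value, so $0$ is never rejected.

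\emph{Inverting the tests.} Reject $\theta$ when $E(X;\theta)\ge 1/\alpha$, i.e.\ exactly when $X\in I_\theta$; by Markov's inequality each such test has level $\alpha$ under $\cU_\theta$. Now solve $X\in I_\theta$ for $\theta>0$:
\begin{itemize}
\item If $X>0$, then $X\in I_\theta$ iff $\theta \ge (T_\alpha+1)X = X + T_\alpha|X|$.
\item If $X<0$, then $X\in I_\theta$ iff $\theta\ge (T_\alpha-1)|X| = X+T_\alpha|X|$.
\item If $X=0$, then $X\notin I_\theta$ for every $\theta$.
\end{itemize}
Symmetrically, for $\theta<0$ rejection happens iff $\theta\le X - T_\alpha|X|$. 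So the non-rejected set is the closed interval $[X-T_\alpha|X|,\, X+T_\alpha|X|]$ when $X\neq 0$, and all of $\mathbb R$ when $X=0$. Our rejection rule $E \ge 1/\alpha$ is slightly larger than the rule $E<1/\alpha$ in \cref{cor:test-R}, so its validity follows from Markov's inequality directly. The set in the statement is the open interval, i.e.\ ours with the two endpoints removed. We expect these endpoints to carry probability zero, since the only possible atom of $P\in\cU_\theta$ is at $\theta$ itself. If that fails, one can shrink $I_\theta$ by an arbitrarily small factor to obtain strict inequality $E>1/\alpha$, and then take an intersection limit.

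\emph{Main obstacle: regularity.} \cref{cor:test-R}, through \cref{thm:cont}, asks for a (left-)continuous density, and the uniform density is not continuous. We would not appeal to \cref{cor:test-R} literally. Instead, we apply \cref{thm:cont} separately to the two pieces:
\begin{itemize}
\item On the side $(X-\theta)^+$ (respectively $(\theta-X)^+$), the indicator of a half-open interval of the form $(0,c]$ is left-continuous, which is exactly what the proof of \cref{thm:cont} uses.
\item The piece on the far side of $0$ is handled in the reflected coordinate $\theta - X\ge 0$, where $[-c,0)$ becomes an interval of the form $(\cdot,\cdot]$ again.
\end{itemize}
We then combine the two sides via the polar description of $\cU_\theta^\circ$ (functions whose restrictions to each side lie in $\fU_1$). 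Because $E(\theta;\theta)=0\le 1$, a possible atom of $P$ at the mode $\theta$ causes no trouble. If this piecewise argument turns out to be messy, the fallback is to approximate the uniform density from below by continuous densities $q^{(k)}\uparrow q_\theta$ and pass to the limit by monotone convergence, since each $|x-\theta|q^{(k)}(x)$ is an e-value.
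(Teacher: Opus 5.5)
\textbf{Gap at $X=\phi$.} You follow the paper's proof closely: reduce to $\phi=0$, take $q_\theta$ uniform on a set of length $\frac{2T_\alpha|\theta|}{T_\alpha^2-1}$ on one side of $\theta$, and invert. However, the puncture at $0$ that you carried over from the discrete proof of \cref{cor:1obs-CI} leaves the case $X=\phi$ unproved.

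At $X=\phi$ your inverted set is all of $\mathbb R$. The stated interval there has radius $T_\alpha|X-\phi|=0$, so it is not ``ours with the two endpoints removed''. Your probability-zero remark does not rescue this, because $\cU_\theta$ allows an atom at the mode: when $\theta=\phi$, the event $\{X=\phi\}$ can have positive probability. In fact, for $P=\delta_\phi\in\cU_\phi$ the literal open interval is empty almost surely. So the statement must be read with the set equal to $\{\phi\}$ when $X=\phi$, which is the singleton behaviour the paper also notes for Edelman's interval, and which its proof actually produces.

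The paper gets this by not puncturing. For $\theta>0$ (with $\phi=0$) it takes $I_\theta=[-\frac{\theta}{T_\alpha-1},\frac{\theta}{T_\alpha+1}]\ni 0$. Including $0$ is free in the continuous case because a point has Lebesgue measure zero; in the discrete case it would add one to $|I_\theta|$, which is why the puncture was needed there. Then $E(0;\theta)=\frac{T_\alpha^2-1}{2T_\alpha}>\frac{T_\alpha-1}{2}=1/\alpha$ for every $\theta\neq 0$. Meanwhile $\theta=0$ is never rejected, since $E(\cdot;0)\le 1$; your choice $E(\cdot;0)\equiv 0$ works equally well. Hence at $X=0$ the inverted set is $\{0\}$, the intended set. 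With your punctured $I_\theta$ you only obtain the valid but uninformative set $\mathbb R$ on that event.

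\textbf{Smaller points.} Your own rejection conditions are $\theta\ge X+T_\alpha|X|$ for $\theta>0$ and $\theta\le X-T_\alpha|X|$ for $\theta<0$. These already give the \emph{open} interval for $X\neq 0$, not the closed one, so the endpoint discussion and the shrink-and-intersect fallback are unnecessary. Also, rejecting when $E\ge 1/\alpha$ is exactly the complement of $\{E<1/\alpha\}$ in \cref{cor:test-R}, not a different rule.

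On regularity, you are right that uniform densities do not meet the continuity hypothesis of \cref{cor:test-R}, a point the paper passes over. Your two-sided piecewise plan is muddled, though. For $\theta>0$ all of $I_\theta$ lies to the left of $\theta$, so both pieces live in the single coordinate $\theta-X$. There $(0,c]$ becomes $[\theta-c,\theta)$, which is not of the form $(\cdot,\cdot]$. Your monotone-approximation fallback works, up to null sets. Simpler still: $q_\theta$ is discontinuous only at finitely many points other than $\theta$, and these are $P$-null for every $P\in\cU_\theta$. So the limit in the proof of \cref{thm:cont} holds $P$-almost surely.
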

\begin{proof}
   It suffices to prove the $\phi = 0$ case.
    For each $\theta > 0$, let $q_\theta$ be the PDF of the uniform distribution on
    \begin{equation}
      I_\theta :=   \left[ -\frac{\theta}{T_\alpha-1} ,  \frac{\theta}{T_\alpha+1} \right].
    \end{equation}
    It is clear that $q_\theta$'s support $I_\theta$ is a subset of $\mathbb R^{\le \theta}$. Further, the length of $I_\theta$ satisfies
    \begin{equation}
       |I_\theta| \le \frac{\theta}{T_\alpha-1} + \frac{\theta}{T_\alpha+1}  = \frac{ 2T_\alpha \theta }{T_\alpha^2-1}.
    \end{equation}
    Therefore, for each $x \in I_\theta$, the realized e-value $E(x;\theta)$ satisfies
    \begin{equation}
        E(n;\theta) = |  \theta - x | \cdot | I_\theta|^{-1} \ge \frac{T_\alpha \theta}{T_\alpha + 1} \cdot \frac{T_\alpha^2-1}{ 2T_\alpha \theta } = 1/\alpha.
    \end{equation}
    Similarly, for each $\theta < 0$, let $q_\theta$ be the PMF of the uniform distribution on
     \begin{equation}
      I_\theta :=  \left[  \frac{\theta}{T_\alpha+1} , -\frac{\theta}{T_\alpha-1} \right] .
    \end{equation}
    Then for each $x \in I_\theta$ we also have $ E(x;\theta)  > 1/\alpha$. Finally, if $\theta = 0$ we simply take $q_\theta$ to be the uniform distribution on $[\theta, \theta + 1]$ (in this case for all $x$, $E(x;\theta) \le 1$).

    From above, we see that the confidence set $ \CI =   \{ \theta \in \mathbb Z :    E(X;\theta) < 1/\alpha  \}$ from \cref{cor:test-R} reads
    \begin{equation}
        \CI = \{ \theta: x \notin I_{\theta} \}  =   ( X - T_\alpha |X |, X + T_\alpha |X | ),
    \end{equation}
    concluding the proof.
\end{proof}

We finally note that this CI is longer than the CI by \cite{Edelman01111990}, which we quote as \cref{prop:edelci}.

\subsection{Stopped change of measure lemma} \label[appendix]{sec:stopped-change}
Let $P$ and $Q$ be probability measures defined on $(\Omega, \mathcal{F})$. For each $n \in \mathbb{N}$, let $P_n$ and $Q_n$ denote the restrictions of $P$ and $Q$ to the $\sigma$-algebra $\mathcal{F}_n$. We assume that $P_n$ is absolutely continuous with respect to $Q_n$, denoted as $P_n \ll Q_n$. The Radon-Nikodym derivative at time $n$ is given by the random variable $L_n$:
\begin{equation}
    L_n = \frac{\d P_n}{\d Q_n}.
\end{equation}
In a standard sequential setting with independent observations $X_1, X_2, \dots$, this takes the form of the likelihood ratio $L_n = \prod_{k=1}^n \frac{\d P}{\d Q}(X_k)$.
\begin{lemma}[Stopped change of measure]
Let $\tau$ be an $(\mathcal{F}_n)$-stopping time such that $P(\tau < \infty) = 1$ and $Q(\tau < \infty) = 1$. Let $(g_n)_{n \in \mathbb{N}}$ be an $(\mathcal{F}_n)$-adapted stochastic process. If either $g_n \ge 0$ almost surely for all $n$, or $\Exp_P[|g_\tau|] < \infty$, then
\begin{equation}
    \Exp_Q[g_\tau L_\tau] = \Exp_P[g_\tau].
\end{equation}
\end{lemma}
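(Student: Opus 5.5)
The plan is to decompose the stopped expectation over the events $\{\tau = n\}$, apply the fixed-time change of measure on each piece, and sum. Since $Q(\tau<\infty)=1$, we have $g_\tau L_\tau = \sum_{n\in\mathbb N} g_n L_n \id_{\{\tau=n\}}$ $Q$-almost surely. Likewise $P(\tau<\infty)=1$ gives $g_\tau = \sum_n g_n \id_{\{\tau=n\}}$ $P$-almost surely. Note that $L_\tau$ needs to be well defined only on $\{\tau<\infty\}$, and this set carries full mass under both measures.

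\textbf{Fixed-time step.} Fix $n$. The random variable $g_n\id_{\{\tau=n\}}$ is $\mathcal F_n$-measurable, because $g$ is adapted and $\tau$ is a stopping time. Since $P_n\ll Q_n$ with density $L_n$, for any nonnegative $\mathcal F_n$-measurable $h$ we have $\Exp_Q[hL_n] = \Exp_{Q_n}[h\,\d P_n/\d Q_n] = \Exp_{P_n}[h] = \Exp_P[h]$. Here we use that expectations of $\mathcal F_n$-measurable variables under $P$ and $Q$ agree with those under the restrictions $P_n$ and $Q_n$. Applying this with $h = g_n\id_{\{\tau=n\}}$ gives
\begin{equation}
\Exp_Q[g_nL_n\id_{\{\tau=n\}}] = \Exp_P[g_n\id_{\{\tau=n\}}].
\end{equation}

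\textbf{Nonnegative case.} If $g_n\ge 0$ for all $n$, every summand is nonnegative. We sum the fixed-time identity over $n$ and use monotone convergence (Tonelli) on both sides. This yields $\Exp_Q[g_\tau L_\tau] = \Exp_P[g_\tau]$, with both sides possibly equal to $+\infty$.

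\textbf{Integrable case.} If instead $\Exp_P|g_\tau|<\infty$, we split $g_n = g_n^+ - g_n^-$. Both parts are adapted and nonnegative, so the nonnegative case applies to each. It gives $\Exp_Q[g_\tau^\pm L_\tau] = \Exp_P[g_\tau^\pm] \le \Exp_P|g_\tau| < \infty$. Because both $Q$-expectations are finite, we may subtract them. This shows that $g_\tau L_\tau$ is $Q$-integrable and that $\Exp_Q[g_\tau L_\tau] = \Exp_P[g_\tau]$.

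\textbf{Main obstacle.} The only delicate point is that the event $\{\tau=\infty\}$ must be negligible under both measures; otherwise mass could escape there. For example, we cannot deduce $P(\tau<\infty)=1$ from $Q(\tau<\infty)=1$ without global absolute continuity, and we only assume it locally. This is exactly why the lemma assumes $P(\tau<\infty)=Q(\tau<\infty)=1$. With that hypothesis, both decompositions over $\{\tau=n\}$ are exhaustive almost surely, and the argument goes through.
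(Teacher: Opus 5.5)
Your proposal is correct and follows essentially the same route as the paper's proof: decompose over $\{\tau=n\}$, apply the fixed-time identity $\Exp_Q[hL_n]=\Exp_P[h]$ to $h=g_n\id_{\{\tau=n\}}$, interchange sum and expectation by Tonelli under both measures, then handle the integrable case by splitting into $g^+$ and $g^-$. Your remark on why both $P(\tau<\infty)=1$ and $Q(\tau<\infty)=1$ are needed matches the role these hypotheses play in the paper's argument.
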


\begin{proof}
First, consider the case where the process is nonnegative, meaning $g_n \ge 0$ for all $n$. We evaluate the expectation under $Q$ by partitioning the event $\{\tau < \infty\}$ into the disjoint union of events $\{\tau = n\}$ for $n \ge 1$:
\begin{equation}
    \Exp_Q[g_\tau L_\tau] = \Exp_Q\left[ g_\tau L_\tau \sum_{n=1}^\infty \id_{\{\tau = n\}} \right] = \Exp_Q\left[ \sum_{n=1}^\infty g_n L_n \id_{\{\tau = n\}} \right].
\end{equation}
Since all terms in the sum are nonnegative, Tonelli's theorem (or the monotone convergence theorem) allows us to interchange the expectation and the infinite series:
\begin{equation}
    \Exp_Q\left[ \sum_{n=1}^\infty g_n L_n \id_{\{\tau = n\}} \right] = \sum_{n=1}^\infty \Exp_Q\left[ g_n L_n \id_{\{\tau = n\}} \right].
\end{equation}
By the definition of a stopping time, the event $\{\tau = n\}$ is $\mathcal{F}_n$-measurable. Since the process $(g_n)$ is adapted, $g_n$ is also $\mathcal{F}_n$-measurable. Consequently, the random variable $X_n = g_n \id_{\{\tau = n\}}$ is $\mathcal{F}_n$-measurable. 

The definition of the Radon-Nikodym derivative $L_n = \d P_n / \d Q_n$ on $\mathcal{F}_n$ implies that for any non-negative $\mathcal{F}_n$-measurable random variable $X_n$:
\begin{equation}
    \Exp_Q[X_n L_n] = \Exp_P[X_n].
\end{equation}
Applying this property to $X_n = g_n \id_{\{\tau = n\}}$, we obtain:
\begin{equation}
    \Exp_Q\left[ g_n L_n \id_{\{\tau = n\}} \right] = \Exp_P\left[ g_n \id_{\{\tau = n\}} \right].
\end{equation}
Substituting this back into our infinite series gives:
\begin{equation}
    \Exp_Q[g_\tau L_\tau] = \sum_{n=1}^\infty \Exp_P\left[ g_n \id_{\{\tau = n\}} \right].
\end{equation}
Applying Tonelli's theorem once more under the measure $P$ (valid due to nonnegativity), we bring the sum back inside the expectation:
\begin{equation}
    \sum_{n=1}^\infty \Exp_P\left[ g_n \id_{\{\tau = n\}} \right] = \Exp_P\left[ \sum_{n=1}^\infty g_n \id_{\{\tau = n\}} \right] = \Exp_P[g_\tau \id_{\{\tau < \infty\}}].
\end{equation}
Since we assumed that $\tau$ is finite $P$-almost surely (i.e., $P(\tau < \infty) = 1$), it follows that $\id_{\{\tau < \infty\}} = 1$ $P$-a.s., which yields $\Exp_P[g_\tau \id_{\{\tau < \infty\}}] = \Exp_P[g_\tau]$. This completes the proof for the nonnegative case.

For the general case where $g$ takes both positive and negative values, we assume that $\Exp_P[|g_\tau|] < \infty$. We can decompose the adapted process into its positive and negative parts: $g_n = g_n^+ - g_n^-$, where $g_n^+ = \max(g_n, 0)$ and $g_n^- = \max(-g_n, 0)$. Both $g^+$ and $g^-$ are non-negative adapted processes. Applying the result proven above to $|g_\tau| = g_\tau^+ + g_\tau^-$, we have:
\begin{equation}
    \Exp_Q[|g_\tau| L_\tau] = \Exp_P[|g_\tau|] < \infty,
\end{equation}
which implies that $g_\tau L_\tau$ is integrable with respect to $Q$. Applying the nonnegative result to $g^+$ and $g^-$ individually and using the linearity of expectation yields:
\begin{align*}
    \Exp_Q[g_\tau L_\tau] &= \Exp_Q[g_\tau^+ L_\tau] - \Exp_Q[g_\tau^- L_\tau] \\
    &= \Exp_P[g_\tau^+] - \Exp_P[g_\tau^-] \\
    &= \Exp_P[g_\tau].
\end{align*}
This concludes the proof.
\end{proof}

\end{document}